\newtheorem{theorem}{Theorem}
\newtheorem{proposition}{Proposition}[section]
\newtheorem{lemma}[proposition]{Lemma}
\theoremstyle{definition}
\newtheorem{remark}[proposition]{Remark}
\numberwithin{equation}{section}
\def\@tocline#1#2#3#4#5#6#7{\relax
  \ifnum #1>\c@tocdepth 
  \else
    \par \addpenalty\@secpenalty\addvspace{#2}%
    \begingroup \hyphenpenalty\@M
    \@ifempty{#4}{%
      \@tempdima\csname r@tocindent\number#1\endcsname\relax
    }{%
      \@tempdima#4\relax
    }%
    \parindent\z@ \leftskip#3\relax \advance\leftskip\@tempdima\relax
    \rightskip\@pnumwidth plus4em \parfillskip-\@pnumwidth
    #5\leavevmode\hskip-\@tempdima
      \ifcase #1
       \or\or \hskip 1em \or \hskip 2em \else \hskip 3em \fi%
      #6\nobreak\relax
    \dotfill\hbox to\@pnumwidth{\@tocpagenum{#7}}\par
    \nobreak
    \endgroup
  \fi}
\newcommand\eps{\varepsilon}
\newcommand\e{{\rm e}}
\newcommand\dd{{\rm d}}
\newcommand\ddt{{\frac{\dd}{\dd t}}}
\newcommand\de{{\partial}}
\newcommand{\NN}{\mathbb{N}}
\newcommand\RR {{\mathbb R}}
\newcommand\DD {{\mathbb D}}
\def\SS {{\mathbb S}}
\newcommand\bu{{\boldsymbol u}}
\newcommand\cO{{\mathcal O}}
\newcommand\cP{{\mathcal P}}
\newcommand\cS{{\mathcal S}}
\newcommand\sfA{{\mathsf A}}
\newcommand\sfE{{\mathsf E}}
\newcommand\sfF{{\mathsf F}}
\newcommand\sfG{{\mathsf G}}
\newcommand\sfK{{\mathsf K}}
\newcommand\sfL{{\mathsf L}}
\newcommand\sfS{{\mathsf S}}
\newcommand\sfa{{\mathsf a}}
\newcommand\sfe{{\mathsf e}}
\newcommand\sfm{{\mathsf m}}
 \newcommand\ind{{\mathbbm 1}}
 \newcommand\LL{{L}}
\begin{document}

\title[Entropy maximization in the $2d$ Euler equations]{\vspace*{-3cm} Entropy maximization in the two-dimensional Euler equations} 
\author[M. Coti Zelati]{Michele Coti Zelati}
\address{Department of Mathematics, Imperial College London, London, SW7 2AZ, UK}
\email{m.coti-zelati@imperial.ac.uk}

\author[M.G. Delgadino]{Matias G. Delgadino}
\address{Department of Mathematics, The University of Texas at Austin, TX 78712-1202, USA}
\email{matias.delgadino@math.utexas.edu}

\subjclass[2020]{35Q31, 37K58, 49Q22}

\keywords{Euler equations, entropy maximization, statistical hydrodynamics, rearrangement inequalities, optimal transport}

\begin{abstract}
We consider variational problem related to entropy maximization in the two-dimensional Euler equations, in order  to investigate the long-time dynamics of solutions with bounded vorticity. 
Using variations on the classical min-max principle and borrowing ideas from optimal transportation and quantitative rearrangement inequalities, we prove results on the structure of entropy
maximizers arising in the investigation of the long-time behavior of vortex patches. We further show that the same techniques apply in the study of stability of the canonical Gibbs measure associated to a system of point vortices.
\end{abstract}

\maketitle

\tableofcontents

\section{Long-time dynamics in two-dimensional perfect fluids}\label{sec:intro}
The Euler equations describing the motion of an inviscid and incompressible fluid in a two-dimensional regular simply connected domain $M\subset\RR^2$ read
\begin{equation}\label{eq:Euler}\tag{E}
\begin{cases}
\de_t\omega+\bu\cdot \nabla \omega=0,\\
\omega|_{t=0}=\omega^{in},
\end{cases}
\end{equation}
where $\omega(t,x):\RR\times M\to \RR$ is the vorticity and $\bu(t,x):\RR\times M\to \RR^2$ is the divergence-free velocity field, related to $\omega$
through the \emph{Biot-Savart} law
\begin{equation}\label{eq:stream}
\bu=\nabla^\perp \psi=(-\de_{x_2}\psi,\de_{x_1}\psi), \qquad
\begin{cases}
\Delta\psi=\omega,\quad &\text{in } M,\\
\psi=0,\quad &\text{on }\de M.
\end{cases}
\end{equation}
In short, $\bu=\nabla^\perp\Delta^{-1}\omega$. The transport nature of equation \eqref{eq:Euler} translates into the representation of solution
via the method of characteristics
\begin{equation}
\omega(t,x) =\omega^{in} \circ \Phi^{-1}_t (x),
\end{equation}
where
\begin{equation}
\ddt \Phi_t(x) =\bu (t,\Phi_t(x)),\qquad \Phi_0(x)=x,
\end{equation}
is the Lagrangian flow. Thanks to Yudovich theory \cite{Yudovich63}, the Euler equations can be seen as a well-posed, weak-$*$ continuous dynamical system on the 
compact metric space given by the unit ball in $L^\infty$
\begin{equation}
X:= \left\{\omega\in L^\infty(M): \|\omega\|_{L^\infty}\leq 1\right\},
\end{equation}
endowed with the weak-$*$ topology (see \cite{Nguyen22} for a recent proof of this fact).  It is then natural to ask what is the generic long-time picture of solutions
to \eqref{eq:Euler}. A central conjecture due to V. Šverák \cite{Sverak12} posits that generic initial data give rise to solutions whose orbits are not precompact in $L^2$. While
this conjecture, in its generality, remains currently out of reach, the recent \emph{inviscid damping} results \cites{BM15,MZ20,IJ22,IJ20,IJnon20} validate it in certain perturbative regimes.

A related point of view revolves around the idea that the velocity causes a cascade towards high frequencies that averages (i.e. \emph{mixes}) the vorticity in infinite time. The Euler equations \eqref{eq:Euler} preserve physically relevant quantities, hence said frequency cascade is constrained to be consistent with many conservation laws. These are the \emph{kinetic energy}
\begin{equation}
\sfE(\omega)=\frac12\int_M | \nabla^\perp \Delta^{-1}\omega(x) |^2\dd x=\frac12\int_M | \bu(x) |^2\dd x,
\end{equation}
the \emph{circulation}
\begin{equation}
\sfK(\omega)=\int_{\de M} \bu \cdot\dd\ell = \int_M \omega(x) \dd x,
\end{equation}
and the \emph{Casimirs}
\begin{equation}
\sfS_f(\omega)=\frac{1}{|M|}\int_M f(\omega(x))\dd x, \qquad \text{for any continuous } f:\RR\to \RR.
\end{equation}
Along any sequence of times tending to infinity, weak compactness implies the existence of subsequential limit points $\omega_\infty$ for the dynamics.
 While $\sfE$ and $\sfK$  are continuous in the weak-$*$ topology, and hence  
 \begin{equation}\label{eq:conserved}
 	(\sfE,\sfK)(\omega^{in})=(\sfE,\sfK)(\omega_\infty),
 \end{equation}
the Casimirs may lose information at infinite time. In particular, if $\omega(t_j) \xrightharpoonup{*}\omega_\infty$, along a sequence of time $t_j\to\infty$,
then only upper-semicontinuity can be deduced, namely 
\begin{equation}
\sfS_f(\omega^{in})=\limsup_{j\to\infty}\sfS_f(\omega(t_j))\leq\sfS_f(\omega_\infty), \qquad \text{for any continuous concave } f:\RR\to \RR.
\end{equation}
A strict inequality above is associated to mixing and is often observed in the long-time limit of the two-dimensional Euler equations. 

To give the above observations a robust mathematical framework, one can account for the Euler evolution and its long-time limits by considering 
the weakly-$*$ closed set
\begin{equation}\label{eq:Osets}
\cO_{in} = \overline{\left\{\omega^{in}\circ \Phi \ |\ \Phi:M\to M \text{ is an area preserving diffeomorphism} \right\}}^*,
\end{equation}
which can be seen as the \emph{orbit} of the natural action of the volume-preserving
diffeomorphism group on the vorticity field $\omega^{in}$.
Although this set may strictly contain the $\Omega$-limit set of $\omega^{in}$, we can get close to the dynamics of $2d$ Euler
by intersecting $\cO_{in}$ with the various conservation laws \eqref{eq:conserved}. This approach provides the basis of Shnirelman’s maximal mixing
theory \cite{Shnirelman93}, explored and revisited recently in \cite{DD22}.

\subsection{A statistical mechanics perspective}\label{sub:statmech}
For specific choices of $f$ (e.g. $f(\omega)=-\omega\log\omega$),  $\sfS_f$ can be seen as a measure of \emph{entropy}, which is a measure of the number of possible configurations at the microscopic level that leads to the observable macrostate. The \textit{second law of thermodynamics} states that the \textit{entropy} of an isolated system will never decrease, but will instead tend to increase over time until it reaches a maximum value at equilibrium. 

In his seminal 1949 work \cite{Onsager49}, L. Onsager argued, under certain ergodicity assumptions, that Euler flows originating from point vortices should relax to vorticities that maximize the Boltzmann entropy 
\begin{equation}\label{eq:boltz}
\sfS(\omega):=-\frac{1}{|M|}\int_M  \omega(x) \log\omega(x) \dd x,
\end{equation}
subject to all conservation laws, in analogy with equilibrium statistical mechanics.  The field has seen tremendous growth since then, with the development
of \emph{statistical hydrodynamics} theories corresponding to variational problems of the type
\begin{equation}\label{eq:maxprobgen}
\text{maximize } \sfS_f(\omega)\text{ subject to } \omega\in \cO_{in} \quad \mbox{ and }\quad \sfE(\omega) =\sfE(\omega^{in}),
\end{equation} 
for suitable choices of $f$, see the reviews \cites{Robert95,BV12}. A rigorous picture for point vortices has been established in the seminal articles \cites{CLMP92,CLMP95,ES93,Kiessling93}. 
In this article we center in the choice of the Boltzmann entropy \eqref{eq:boltz}, but  other choices of $f$  are also physically relevant.

In the canonical formalism of statistical mechanics, the maximal entropy functional is computed via a variational problem over a microcanonical ensemble as
\begin{equation}\label{eq:maxentropy}
\cS(\sfe)=\max \left\{\sfS_f(\omega): \omega\in \cO_{in},\ \sfE(\omega) =\sfe  \right\}, \qquad 
\end{equation}
and is expected to be concave with respect to the energy level $\sfe$.  However, even in the simple scenario of a single vortex patch in a disk,
 this seems to be an open question \cite{Sverak12}.
 
One of the aims of this article is to give
a general strategy to show the concavity of $\cS$, via a variation of the classical min-max principle. The energy constraint in \eqref{eq:maxentropy} can indeed be  rewritten as 
\begin{equation}\label{eq:Srelaxed}
\cS(\sfe)=\max_{\omega\in \cO_{in}} \left\{ \sfS_f(\omega) +\inf_{\beta\in \RR} \beta \left(\sfe-\sfE(\omega)\right)\right\},
\end{equation}
since
\begin{equation}
\inf_{\beta\in \RR}\beta\left(\sfe-\sfE(\omega)\right)=\begin{cases}
0,& \mbox{if $\sfE(\omega)=\sfe$},\\
-\infty, &\mbox{otherwise}.
\end{cases}
\end{equation}
By formally commuting the $\max$ and the $\inf$ in \eqref{eq:Srelaxed}, also known as the min-max principle, we are able to rewrite
\begin{equation}\label{eq:Sminmax}
\cS(\sfe)=\inf_{\beta\in \RR}\left\{ \beta \sfe +\max_{\omega\in \cO_{in}} \left( \sfS_f(\omega) - \beta \sfE(\omega)\right)\right\}=
\inf_{\beta\in \RR}\left\{ \beta \sfe +\max_{\omega\in \cO_{in}} \sfF_{\beta}(\omega) \right\}.
\end{equation}
in terms of the associated free energy 
\begin{equation}\label{eq:freeenergy}
 \sfF_{\beta}(\omega):= \sfS_f(\omega) -\beta \sfE(\omega).
\end{equation}
If the min-max principle applies, it follows immediately that $\cS$ is concave as it is now written as the infimum over affine functions of $\sfe$. The rigorous justification of the min-max theorem is included in  Appendix \ref{app:minmax}, whose main requirement is the uniqueness of maximizers of $ \sfF_{\beta}$ in $\cO_{in}$ for every  $\beta\in\RR$. 
This discussion leads to the following conditional result.

\begin{theorem}[Uniqueness implies concavity]\label{thm:main}
Assume that $\omega^{in}=\ind_A$, for some $A\subset M$, and suppose that $f\in C([0,\infty))\cap C^1((0,\infty))$ is strictly concave with $f'(z)\to -\infty$ as $z\to 0^+$. If for any achievable energy level the constrained entropy maximization problem \eqref{eq:maxprobgen} has a unique maximizer, then the maximal entropy functional $\cS$ in \eqref{eq:maxentropy} is strictly concave.
\end{theorem}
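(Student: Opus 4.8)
The plan is to deduce strict concavity from the min--max representation \eqref{eq:Sminmax} via Legendre duality, after recording the structural features that make the relaxation \eqref{eq:Srelaxed}--\eqref{eq:Sminmax} effective. Since $\omega^{in}=\ind_A$, the orbit $\cO_{in}$ coincides with the convex, weak-$*$ compact set $\{\omega\in L^\infty(M):0\le\omega\le1,\ \int_M\omega=|A|\}$; on it $\sfS_f$ is strictly concave and weak-$*$ upper semicontinuous (as $f$ is strictly concave and continuous), while $\sfE(\omega)=\tfrac12\langle\omega,(-\Delta^{-1})\omega\rangle$ is a continuous, positive-definite, hence convex, quadratic form. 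Consequently each $\sfF_\beta=\sfS_f-\beta\sfE$ attains its maximum on $\cO_{in}$, and the value function $G(\beta):=\max_{\omega\in\cO_{in}}\sfF_\beta(\omega)$ is convex in $\beta$, being a supremum of the affine maps $\beta\mapsto\sfS_f(\omega)-\beta\sfE(\omega)$.

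Granting the min--max principle of Appendix~\ref{app:minmax}, whose requirement is the uniqueness of maximizers of $\sfF_\beta$, \eqref{eq:Sminmax} identifies $\cS$ with a concave Legendre transform, $\cS(\sfe)=\inf_{\beta}\{\beta\sfe+G(\beta)\}=-G^*(-\sfe)$. The decisive point is then the standard duality dictionary between a convex function and its conjugate: $\cS$ is \emph{strictly} concave on the interior of its domain precisely when $G$ is differentiable, and by Danskin's theorem the subdifferential $\partial G(\beta)$ is the convex hull of $\{-\sfE(\omega):\omega\in\arg\max\sfF_\beta\}$, so $G$ is differentiable at $\beta$ exactly when all maximizers of $\sfF_\beta$ share a common energy, with $G'(\beta)=-\sfE(\omega_\beta)$. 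The task thus reduces to upgrading the hypothesis---uniqueness of the constrained maximizer at each achievable energy---into the statement that, for every $\beta\in\RR$, all maximizers of $\sfF_\beta$ carry the same energy.

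I would carry out this upgrade by contradiction, distinguishing two scenarios for a putative failure. If two distinct maximizers of $\sfF_\beta$ had the \emph{same} energy $\sfe$, then each would maximize $\sfS_f$ over $\{\sfE=\sfe\}\cap\cO_{in}$---indeed, any maximizer $\omega_\beta$ of $\sfF_\beta$ is automatically a constrained maximizer at its own energy $\sfE(\omega_\beta)$---directly contradicting the uniqueness hypothesis at level $\sfe$. The remaining possibility is two maximizers at \emph{distinct} energies $\sfe_1<\sfe_2$; by the weak-duality bound $\cS(\cdot)\le\beta\,(\cdot)+G(\beta)$, with equality at $\sfe_1$ and $\sfe_2$, concavity then forces $\cS$ to be affine of slope $\beta$ on all of $[\sfe_1,\sfe_2]$, and every intermediate energy is realized by a maximizer of $\sfF_\beta$. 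For $\beta\ge0$ this is immediately impossible: $\sfF_\beta=\sfS_f-\beta\sfE$ is strictly concave on the convex set $\cO_{in}$, hence has a unique maximizer, so a whole interval of energies cannot occur. The entire difficulty is therefore confined to the negative-temperature regime $\beta<0$, where $\sfF_\beta$ loses concavity.

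I expect this high-energy regime to be the main obstacle. For $\beta<0$ one maximizes the sum of the strictly concave $\sfS_f$ and the convex $|\beta|\sfE$, so convex combinations of the endpoint maximizers $\bar\omega_{\sfe_1},\bar\omega_{\sfe_2}$ need not remain maximizers, and the naive midpoint comparison is inconclusive: the strict gain from concavity of $\sfS_f$ is exactly offset by the energy spread. The structural input I would invoke is the Euler--Lagrange characterization together with the blow-up of $f'$ at the origin, which keeps every maximizer in the interior $\{\omega>0\}$: along a putative affine segment all $\bar\omega_\sfe$ solve the single mean-field relation $\tfrac1{|M|}f'(\bar\omega_\sfe)+\beta\,\Delta^{-1}\bar\omega_\sfe=\mu_\sfe$ under the fixed mass constraint $\int_M\bar\omega_\sfe=|A|$, and the uniqueness hypothesis---propagated through Appendix~\ref{app:minmax}---is precisely what forbids a one-parameter family of distinct solutions of the associated variational problem. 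Once this same-energy property is secured for all $\beta$, the function $G$ is differentiable on $\RR$ and $\cS=-G^*(-\,\cdot\,)$ is strictly concave, completing the argument.
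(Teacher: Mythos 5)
Your proof follows the same duality skeleton as the paper's own argument (Proposition~\ref{prop:conditionalconcavity}): the min--max representation from Appendix~\ref{app:minmax}, the Danskin/Zalinescu formula for the subdifferential of $g(\beta)=\max_{\omega\in\cO_{in}}\sfF_\beta(\omega)$, and the conclusion that single-valuedness of $\partial g$ rules out affine pieces of $\cS$. (The paper packages the last step slightly more concretely: it shows $\beta\mapsto\sfE(\omega_\beta)$ is strictly monotone by adding the two strict inequalities $\sfF_{\beta_1}(\omega_{\beta_1})>\sfF_{\beta_1}(\omega_{\beta_2})$ and $\sfF_{\beta_2}(\omega_{\beta_2})>\sfF_{\beta_2}(\omega_{\beta_1})$, with $\omega_{\beta_1}\ne\omega_{\beta_2}$ coming from the Euler--Lagrange equation of Lemma~\ref{lem:EL1}.) The structural difference is the input: the paper's Proposition~\ref{prop:conditionalconcavity} takes as hypothesis that \emph{the free energy $\sfF_\beta$ has a unique maximizer for every $\beta\in\RR$} --- this is how the hypothesis of Theorem~\ref{thm:main} is operationally read, and it is what Theorem~\ref{thm:uniqueness} verifies on the disk --- whereas you attempt to \emph{derive} the needed property (all maximizers of $\sfF_\beta$ share one energy) from the literal hypothesis of uniqueness of the constrained maximizer at each energy level.

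That bridging step is where your proposal has a genuine gap. Your Case 1 (equal energies) and the case $\beta\ge 0$ are fine, but the decisive case --- $\beta<0$ with maximizers of $\sfF_\beta$ at distinct energies $\sfe_1<\sfe_2$ --- is handled circularly: you invoke concavity of $\cS$ to force affineness on $[\sfe_1,\sfe_2]$, yet concavity of $\cS$ is only available \emph{after} the min--max principle of Proposition~\ref{prop:minmax}, whose hypothesis Proposition~\ref{prop:minmax}\ref{item:minmaxunique} (or its weakening in Remark~\ref{rem:app}) is exactly the common-energy property you are trying to establish. The difficulty is not cosmetic: since $g(\beta)=\max_{\sfe}\{\cS(\sfe)-\beta\sfe\}$, the existence of maximizers of $\sfF_\beta$ at two distinct energies is \emph{equivalent} to $\cS$ lying strictly below one of its chords, i.e.\ to the failure of concavity; excluding this scenario is therefore the theorem itself, not a reduction that abstract duality can dispatch, and constrained uniqueness alone gives no leverage on it (a non-concave $\cS$ with unique constrained maximizers at every level would produce precisely this scenario at the slope of the touching chord). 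Your closing paragraph concedes rather than closes the gap: the Euler--Lagrange relation $\frac{1}{|M|}f'(\bar\omega)+\beta\,\Delta^{-1}\bar\omega=\mu$ does not by itself forbid a one-parameter family of solutions, and ``the uniqueness hypothesis propagated through Appendix~\ref{app:minmax}'' cannot be used before the hypotheses of Appendix~\ref{app:minmax} are verified. To be fair, the paper never bridges these two hypotheses either --- its proof is genuinely of the implication ``free-energy uniqueness $\Rightarrow$ strict concavity'' --- but your proposal, by setting out to prove the statement from the constrained hypothesis, stalls exactly at this unproved (and, absent further structure, unprovable-by-soft-arguments) step.
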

\begin{remark}
    The uniqueness of the entropy maximization problem \eqref{eq:maxprobgen} can be relaxed to allow uniqueness up to transformations that preserve the entropy and the energy, see Remark~\ref{rem:app}. For instance, in the case of radially symmetric domains, we can for instance allow for uniqueness up to rotations.
\end{remark}

Showing the uniqueness property required in the above Theorem \ref{thm:main} in general settings is an interesting and challenging open question. In statistical physics, non-uniqueness of the equilibrium state is directly related to phase transitions in the associated system \cites{HO11,MR4271956,MR4604897}. For interacting particle \cites{CGPS20,CP10} and spin glass systems, several toy models exhibiting discontinuous phase transitions exist. In the case of $2d$ Euler, it could be expected that for non-trivial geometries, a discontinuous phase transitions takes place (specifically, the relevant type are first order or discontinuous phase transitions).

While in the variational problem  it is just a Lagrange multiplier,  
$\beta$ plays the role of the inverse temperature in the language of statistical mechanics. More specifically, if there exists a unique $\beta=\beta(\sfe)$ that attains the infimum in \eqref{eq:Sminmax},
we obtain the classical statistical mechanics identity (Classius Law)
\begin{equation}
\beta=\frac{\dd \cS}{\dd\sfe}. 
\end{equation}
That is to say, the derivative of the maximal entropy with respect to the energy is the inverse temperature of the system. In particular,
$\beta(\sfe)$ is strictly decreasing in $\sfe$, and hence we can write the energy associated to a given inverse temperature level $\beta$ (see Figure \ref{fig:Sfunction}).

\begin{figure}[ht]
    \includegraphics[width=0.5\linewidth]{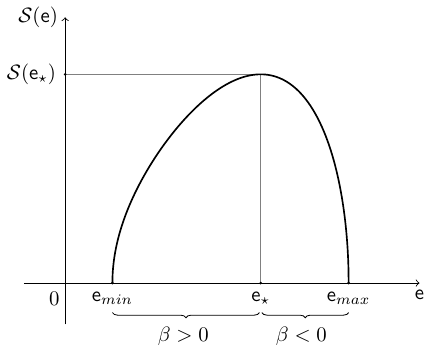}
      \caption{The function $\sfe\mapsto\cS(\sfe)$. The three cases $\beta\to -\infty$, $\beta=0$ and $\beta\to\infty$ are associated with the energy levels $\sfe_{max}$, $\sfe_\star$ and $\sfe_{min}$, respectively.}
  \label{fig:Sfunction}
\end{figure}

\subsection{Main results for vortex patches}\label{sub:mainresult}
One of the purposes of this article is to show the uniqueness of maximizers in \eqref{eq:maxprobgen} in the case when $\omega^{in}$ is a vortex patch on the disk, by exploiting the the theory of radial rearrangements. 
We therefore consider $M=\DD$, the unit disk centered at the origin, 
and  $\omega^{in}$  the indicator function of a set $A\subset \DD$, i.e.
\begin{equation}\label{eq:vortexpatch}
\omega^{in}(x)=\ind_A(x), \qquad \frac{|A|}{|\DD|} =\sfm \in(0,1).
\end{equation}
In this context, the corresponding set in \eqref{eq:Osets} takes the particularly amenable form \cite{DD22}
\begin{equation}\label{eq:Ochar}
\cO:=\left\{\omega\in L^\infty: 0\leq \omega\leq 1, \ \frac{1}{|\DD|}\int_\DD \omega(x)\dd x =\sfm\right\}.
\end{equation}
Since $\cO$ is weak-$*$ compact, the weak continuity of $\sfE$ implies the existence of a maximum and a minimum, 
denoted $\sfe_{max}$ and $\sfe_{min}$, respectively. 
Also, since $0\notin \cO$ and $\sfE(\omega)=0$ if and only if $\omega=0$, it necessarily holds that $\sfe_{min}>0$.
In fact, $\sfe_{max}$ and $\sfe_{min}$ and corresponding vorticities can be computed explicitly, 
see Lemma \ref{lem:energymaxmin}. 
Given $\sfe\in[\sfe_{min},\sfe_{max}]$, we are interested in the  maximization problem
\begin{equation}\label{pb:maxpb}
\text{maximize } \sfS(\omega)\text{ subject to } \omega\in \cO\mbox{ and 
}\sfE(\omega) =\sfe,
\end{equation} 
where $\sfS$ denotes the Boltzmann  entropy 
\begin{equation}\label{eq:boltzDisc}
\sfS(\omega):=-\frac{1}{|\DD|}\int_\DD  \omega(x) \log\omega(x) \dd x.
\end{equation}

\begin{remark}[On the choice of entropy]
Our analysis does not heavily rely on the specific form \eqref{eq:boltzDisc}, although for the patch problem, several other entropies can be considered, consistent with classical theories of statistical hydrodynamics. For instance, in the Robert–Sommeria–Miller theory \cites{Robert90,miller1990statistical,robert1991statistical,Robert91}, the choice of $\sfS$ should be dictated by form of the initial datum: assume that $\DD$ is partitioned into a disjoint union of sets $\{A_\ell\}_{\ell=1}^N$, and $\omega^{in}=\sum_\ell a_\ell \ind_{A_\ell}$  with $a_\ell\in [0,1]$. Then  one can define the entropy (generated by $\omega^{in}$) as
\begin{equation}\label{eq:naturalentropy}
\sfS_{rsm}(\omega):=\sup\left\{-\frac{1}{|\DD|}\int_\DD \sum_\ell\rho_\ell(x) \log\rho_\ell(x) \dd x: \ \omega=\sum_\ell a_\ell \rho_\ell,\ 0\leq \rho_\ell\leq 1, \ \sum_\ell  \rho_\ell=1\right\}. 
\end{equation}
The case of a vortex patch \eqref{eq:vortexpatch} leads to explicit computations: since $A_1=A$, $A_2=\DD\setminus A$, $a_1=1$ and $a_2=0$, the only possible choice in \eqref{eq:naturalentropy} is when $\rho_1=\omega$ and $\rho_2=1-\omega$, leading to 
\begin{equation}
    \sfS_{rsm}(\omega):=-\frac{1}{|\DD|}\int_\DD \left[ \omega(x) \log\omega(x)+(1-\omega(x)) \log(1-\omega(x))\right] \dd x,
\end{equation}
Another possibility, introduced by Turkington \cite{Turkington99}, is to consider a different maximization problem compared to \eqref{eq:naturalentropy}, that, in the case of a vortex patch, allows vorticity to mix on the whole range of small scales $a\in [a_2,a_1]=[0,1]$. This leads to the entropy
\begin{equation}\label{eq:Turkington}
\sfS_t(\omega):=\frac{1}{|\DD|}\int_\DD f_{t}(\omega(x))\dd x,
\end{equation}
with
\begin{equation}
f_{t}(\omega)=\sup\left\{-\int_0^1\rho(y)\log\rho(y)\dd y:\ 0\leq \rho\leq 1,\ \int_0^1 \rho(y)\dd y=1, \ \int_0^1 y\rho(y)\dd y=\omega\right\}.
\end{equation}
\end{remark}
The first main result of this article is the following characterization of the the maximal entropy functional, in complete analogy with classical statistical mechanics.
\begin{theorem}\label{thm:concavity}
The function $\cS:[\sfe_{min},\sfe_{max}]\to[0,\infty)$ defined as
\begin{equation}\label{def:S}
\cS(\sfe)=\max \left\{\sfS(\omega): \omega\in \cO,\ \sfE(\omega) =\sfe  \right\}.
\end{equation}
is strictly concave. 
Moreover,     
\begin{enumerate}[label=(\alph*), ref=(\alph*)]
        \item\label{item:maximum} $\cS$ has a unique, strictly positive global maximum at  $ \sfe_\star=\pi\sfm^2/16$;
        \item\label{item:values}  $\cS(\sfe_{min})=\cS(\sfe_{max})=0$;    
        \item\label{item:monotone} $\cS$ is increasing on $[\sfe_{min},\sfe_\star]$ and decreasing on $[\sfe_\star,\sfe_{max}]$.
\end{enumerate}
\end{theorem}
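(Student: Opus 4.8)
The plan is to establish the concavity of $\cS$ by invoking Theorem~\ref{thm:main}, which reduces everything to the uniqueness of maximizers for the constrained problem \eqref{pb:maxpb} at each achievable energy level. Since $\omega^{in}=\ind_A$ is a vortex patch and $f(z)=-z\log z$ satisfies the hypotheses of Theorem~\ref{thm:main} (it is strictly concave on $[0,\infty)$ with $f'(z)=-\log z-1\to+\infty$ as $z\to0^+$, so $f'$ has the required blow-up), the structural concavity of $\cS$ will follow once uniqueness is in hand. The main content of Theorem~\ref{thm:concavity} beyond concavity is therefore: first, the uniqueness of maximizers (the deep input, deferred to the rearrangement/optimal-transport machinery advertised in the abstract and presumably proved in a later section); and second, the explicit identification of the maximizer, the critical energy $\sfe_\star$, and the boundary values. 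I would organize the proof so that concavity is cited from Theorem~\ref{thm:main} and the remaining work is the quantitative analysis of the endpoints and the maximum.

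For part~\ref{item:values}, I would argue that the entropy $\sfS(\omega)=-\frac{1}{|\DD|}\int_\DD\omega\log\omega\,\dd x$ vanishes precisely when $\omega$ is $\{0,1\}$-valued, i.e. when $\omega=\ind_B$ for some measurable $B$; indeed $-z\log z\ge0$ on $[0,1]$ with equality only at $z\in\{0,1\}$, so $\sfS\ge0$ on $\cO$ and $\sfS(\omega)=0$ iff $\omega$ is an indicator. At the extremal energies $\sfe_{\min}$ and $\sfe_{\max}$, Lemma~\ref{lem:energymaxmin} identifies the energy-extremizing vorticities explicitly, and these are themselves indicator functions (the energy extremizers within $\cO$ are characteristic functions of a disk and its complementary annulus by the bathtub/rearrangement principle). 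Hence the unique feasible vorticity at each endpoint is an indicator, forcing $\cS(\sfe_{\min})=\cS(\sfe_{\max})=0$. For part~\ref{item:maximum}, the global maximum of $\sfS$ over the \emph{energy-unconstrained} set $\cO$ is attained at the constant $\omega\equiv\sfm$ by Jensen's inequality (concavity of $-z\log z$ together with the mass constraint $\fint_\DD\omega=\sfm$); I would then compute its energy directly. With $\omega\equiv\sfm$ constant, solving $\Delta\psi=\sfm$ with $\psi|_{\de\DD}=0$ gives the radial stream function $\psi(r)=\frac{\sfm}{4}(r^2-1)$, so $\bu=\nabla^\perp\psi$ has $|\bu|^2=\frac{\sfm^2}{4}r^2$, and $\sfE(\sfm)=\frac12\int_\DD\frac{\sfm^2}{4}r^2\,\dd x=\frac{\sfm^2}{8}\cdot2\pi\int_0^1 r^3\,\dd r=\frac{\pi\sfm^2}{16}$, matching $\sfe_\star$. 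Since this unconstrained maximizer is feasible at energy $\sfe_\star$, it must be the constrained maximizer there, giving the unique strictly positive global maximum $\cS(\sfe_\star)=\sfS(\sfm)=-\sfm\log\sfm>0$.

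For part~\ref{item:monotone}, monotonicity on each side of $\sfe_\star$ follows from combining strict concavity with the location of the maximum: a strictly concave function on an interval is strictly increasing to the left of its maximizer and strictly decreasing to the right. Thus once \ref{item:maximum} pins the maximizer at $\sfe_\star$ and strict concavity is established, \ref{item:monotone} is immediate. The one point requiring care is verifying that $\sfe_\star$ lies strictly in the interior $(\sfe_{\min},\sfe_{\max})$, so that there is genuinely an increasing and a decreasing branch; this is guaranteed because the constant state $\omega\equiv\sfm$ is neither energy-minimizing nor energy-maximizing in $\cO$ (the energy extremizers are indicators, which are distinct from the constant $\sfm\in(0,1)$), hence $\sfe_{\min}<\sfe_\star<\sfe_{\max}$.

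The hard part, and the genuine mathematical obstacle, is not any of the explicit computations above but rather the uniqueness hypothesis feeding Theorem~\ref{thm:main}: one must show that for every $\beta\in\RR$ the free energy $\sfF_\beta=\sfS-\beta\sfE$ has a unique maximizer over $\cO$. This is precisely where the quantitative rearrangement inequalities and optimal-transport techniques from the abstract enter, and I would expect it to be handled in a dedicated section rather than in the proof of Theorem~\ref{thm:concavity} itself. Accordingly, my proof of Theorem~\ref{thm:concavity} would treat uniqueness as a cited input and focus on assembling concavity from Theorem~\ref{thm:main} together with the endpoint and maximum computations; the subtlety there is ensuring the uniqueness statement is invoked in the form (uniqueness up to entropy- and energy-preserving symmetries, i.e. rotations on the disk) permitted by the Remark following Theorem~\ref{thm:main}, since on the radially symmetric domain $\DD$ the maximizers are only unique modulo rotation.
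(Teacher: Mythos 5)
Your proposal is correct and follows essentially the same route as the paper: strict concavity is obtained from Theorem~\ref{thm:main} with the uniqueness of free-energy maximizers (Theorem~\ref{thm:uniqueness}) as the cited deep input, part~(a) from the constant unconstrained maximizer $\omega_\star\equiv\sfm$ with $\sfE(\omega_\star)=\pi\sfm^2/16$, part~(b) from the uniqueness of the energy extremizers (the paper's Lemma~\ref{lem:singletons}), which are indicator functions and hence have zero entropy, and part~(c) as a formal consequence of concavity together with (a)--(b). The only differences are cosmetic: you identify the constant maximizer by Jensen's inequality rather than by the paper's diffeomorphism-invariance argument (Lemma~\ref{lem:unconmax}), and your closing caveat about uniqueness holding only modulo rotations is unnecessary, since the maximizers of $\sfF_\beta$ are radial and therefore genuinely unique.
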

\begin{remark}
Theorem \ref{thm:concavity}\ref{item:monotone} is a consequence of concavity and Theorem \ref{thm:concavity}\ref{item:maximum}-\ref{item:values}.
\end{remark}

In light of Theorem \ref{thm:main}, the rigorous justification of the min-max principle in Appendix \ref{app:minmax} needed for Theorem \ref{thm:concavity} requires
the uniqueness of maximizers of $\sfF_\beta$ in $\cO$ for every $\beta\in\RR$. This is the second main result of this article.

\begin{theorem}\label{thm:uniqueness}
For any $\beta\in \RR$, there exists a unique solution $\omega_\beta\in\cO$
of the maximization problem
\begin{equation}
\text{maximize } \sfF_\beta(\omega)\text{ subject to } \omega\in \cO.
\end{equation} 
Moreover,     
\begin{itemize}
        \item if $\beta\ge 0$, then $\omega_\beta$ is radially increasing.
        \item if $\beta\le 0$, then $\omega_\beta$ is radially decreasing.
\end{itemize}
In particular,  $\omega_\beta$  is constant when $\beta=0$.
\end{theorem}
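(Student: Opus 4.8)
The plan rests on the structural dichotomy between the two terms of $\sfF_\beta=\sfS-\beta\sfE$. The energy $\sfE(\omega)=\tfrac12\langle\omega,-\Delta^{-1}\omega\rangle$ is a convex quadratic form, since $-\Delta^{-1}$ with Dirichlet data is a positive operator, whereas $\sfS$ is strictly concave and, crucially, depends on $\omega$ only through its distribution function. Existence of a maximizer is routine by the direct method: $\cO$ is weak-$*$ compact, $\sfE$ is weak-$*$ continuous and $\sfS$ is weak-$*$ upper semicontinuous, so $\sfF_\beta$ attains its maximum. For $\beta\ge 0$ the term $-\beta\sfE$ is concave, hence $\sfF_\beta$ is strictly concave on the convex set $\cO$ and the maximizer is unique at once; the degenerate case $\beta=0$ gives the constant $\omega\equiv\sfm$ by Jensen's inequality. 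The entire difficulty is the regime $\beta<0$, where $-\beta\sfE$ is convex and concavity of $\sfF_\beta$ is lost.

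First I would reduce to radial monotone competitors by symmetrization. Since $\sfS$ is invariant under equimeasurable rearrangements and $\cO$ is preserved by them, while the Dirichlet energy on the disk obeys a Riesz-type rearrangement inequality — $\sfE$ is maximized by the symmetric decreasing rearrangement $\omega^*$ and minimized by the symmetric increasing rearrangement over any fixed distribution — one obtains $\sfF_\beta(\omega^*)\ge\sfF_\beta(\omega)$ when $\beta<0$, and the opposite comparison with the increasing rearrangement when $\beta>0$. Combined with the uniqueness already established for $\beta\ge0$, this shows $\omega_\beta$ is radially increasing there; and for $\beta<0$ it forces every maximizer to coincide with its own symmetric decreasing rearrangement, i.e. to be radially decreasing. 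The point requiring care is the \emph{strict} form of the rearrangement inequality — equality only for the centered symmetric rearrangement — for general $[0,1]$-valued $\omega$ rather than pure patches; this is where the quantitative rearrangement input is needed.

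Next I would record the Euler–Lagrange relation. Introducing a Lagrange multiplier for the mass and using that the entropy slope $-\log\omega-1$ diverges as $\omega\to0^+$ while staying finite at $\omega=1$, the lower obstacle never binds and the maximizer satisfies the Gibbs/bathtub relation $\omega_\beta=\min\{1,\,C\,e^{|\DD|\beta\psi}\}$ with $\psi=\Delta^{-1}\omega_\beta$ and $C$ fixed by the mass constraint. Writing $v=-\psi\ge0$, the radial profile solves $-\Delta v=\min\{1,Ce^{\lambda v}\}$ in $\DD$ with $v|_{\partial\DD}=0$, where $\lambda=|\DD||\beta|>0$, together with $\tfrac1{|\DD|}\int_\DD\omega_\beta=\sfm$.

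The main obstacle is uniqueness of this radial profile for $\beta<0$, i.e. showing that the capped semilinear problem above has exactly one radial decreasing solution consistent with the mass. My proposed route is to fix $\lambda$ and view $C$ (equivalently the central value $v(0)$, which determines the core $\{\omega=1\}=\{r<R_0\}$) as the single free parameter, reducing everything to an ODE in $r$; I would then prove that the resulting mass $\mathcal M(C)=\tfrac1{|\DD|}\int_\DD\omega$ is strictly monotone in $C$, by differentiating the ODE, controlling the free boundary $R_0$, and applying a maximum principle to show $\partial_C v$ has a fixed sign. Strict monotonicity of $\mathcal M$ yields uniqueness. The delicate feature — and the crux of the theorem — is that this must hold for \emph{all} $\beta<0$, with no threshold: the exponential nonlinearity is superlinear and would, without the cap, produce Gelfand-type folding of the bifurcation curve and hence multiplicity. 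It is precisely the $L^\infty$ cap $\omega\le1$ together with the explicit radial Green's function of the disk that prevent this folding, and verifying that they do so uniformly in $\beta$ is the analytic content of the statement that the single patch on the disk exhibits no phase transition, in contrast with the non-uniqueness expected for non-radial geometries.
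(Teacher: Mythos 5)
Your treatment of existence, of the case $\beta\ge 0$, of the reduction to radially decreasing competitors via Talenti's inequality, and of the Euler--Lagrange ``bathtub'' relation $\omega_\beta=\min\{1,C\e^{|\DD|\beta\psi}\}$ all coincide with the paper's. The divergence --- and the gap --- is in the crux, uniqueness for $\beta<0$. You reduce it to uniqueness of radial solutions of the capped Liouville free-boundary problem with prescribed mass, to be proven by showing that the mass $\mathcal M(C)$ is strictly monotone in $C$ via a maximum principle for $\partial_C v$. This step is never carried out, and as described it is in trouble. First, $C\mapsto\mathcal M(C)$ is not even single-valued: in the uncapped analogue $-\Delta u=\mu\e^{u}$ on $\DD$ (with $\mu=C\lambda$), every $\mu<2$ carries \emph{two} radial solutions with different masses (the Gelfand fold), so the family must be parametrized by the central value or plateau radius, not by $C$. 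Second, and more seriously, the linearized operator $-\Delta-\lambda\,\omega$ has a zero eigenvalue at the fold and a negative eigenvalue on the upper branch of the uncapped problem, i.e.\ precisely in the large-$|\beta|$ regime the theorem must cover; so the maximum principle you invoke to give $\partial_C v$ a sign is not available there, and positivity of the linearization is essentially equivalent to the local uniqueness you are trying to prove. Asserting that ``the $L^\infty$ cap prevents the folding'' is exactly the statement that needs proof, and your sketch supplies no mechanism for it.

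The paper avoids this spectral difficulty entirely: it never proves uniqueness of solutions to the Euler--Lagrange equation. Instead, it first selects (by weak-$*$ compactness of the set of maximizers) a distinguished maximizer $\omega_\beta$ with \emph{smallest} plateau $\{\omega=1\}=[0,r_\beta)$, so that for any other maximizer $\bar\omega$ the radial Brenier map between them is trivial on the plateau. It then interpolates with this transport map and shows, using only Jensen's inequality and the Euler--Lagrange identity for $\omega_\beta$, that
\begin{equation}
\sfF_\beta(\bar\omega)+D\le \sfF_\beta(\omega_\beta),\qquad D=-2\,\omega_\beta(1)\int_0^1\log\phi(r)\,r\,\dd r\ge 0,
\end{equation}
with $D>0$ unless $\bar\omega=\omega_\beta$; since both are maximizers, the free energies coincide and $\bar\omega=\omega_\beta$ follows. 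Only maximality is used, never non-degeneracy of the linearization. If you wish to salvage your ODE route, you would have to replace the maximum-principle step by a mechanism of this variational or transport type (or by an explicit classification/Bol-type argument for the capped radial problem); as it stands, the central analytic claim of your plan is unverified and the tool proposed for it does not suffice.
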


Our uniqueness proof follows closely the developments in uniqueness of steady states for the standard Keller-Segel model \cite{MR2929020}, and their homogeneous variants \cites{CCH21,CCV15}. In broad terms, the strategy is to find suitable interpolation curves between two competitor states, such that the free energy over the curve is convex or monotone. The seminal paper of McCann \cite{McCann} implements this idea with the interpolation curves given by the geodesics of the optimal transportation distance. We also mention the novel interpolation curve in \cite{DYY} for radially decreasing states, which unfortunately is not directly applicable to our setting. Inspired by \cites{MR2929020,CCH21,CCV15}, we first use rearrangement theory \cites{Talenti76,Kesavan06} to reduce the problem to one dimensional radially decreasing profiles. Then we consider the optimal transportation interpolation between a maximizer and a competitor. Finally, employing Jensen's inequality and the Euler-Lagrange equation for the maximizer, we can show the strict monotonicity of the free energy along the interpolation curve. One of the main difference with \cites{CCH21,CCV15} is that we need to deal with the $L^\infty$ constraint, imposed by \eqref{eq:Ochar}.

\subsection{On the conservation of angular momentum}
The constrained maximization problem \eqref{pb:maxpb} takes into account the conservation laws \eqref{eq:conserved}, but neglects the
additional symmetries of the disk that give rise to conservation of \emph{angular momentum}
\begin{equation}\label{eq:angmom}
\sfA(\omega)=-\int_{\DD} \frac12 \left(1-|x|^2\right)\omega(x)\dd x= \int_{\DD} x^\perp\cdot \bu(x) \dd x,
\end{equation}
which is weak-$*$ continuous, like the energy $\sfE$.

The inclusion of this extra constraint changes the picture dramatically. Indeed, entropy maximizers are no longer necessarily radially symmetric, as the following heuristics illustrate. Ignoring for the moment the $L^\infty$ constraint that the set $\cO$ in \eqref{eq:Ochar} imposes, the limiting case of maximal angular momentum $\sfa_{max}=0$ is achieved only for states $\omega$ which are supported on the boundary $\de\DD$. In which case, if $\omega_{rad}$ is radial then $
\omega_{rad}=c\delta_{\partial\DD}
$
and
$\sfE(\omega_{rad})=0$. On the other hand, the non-radial state
$
\omega_{x_0}=\delta_{x_0}
$ 
with $x_0\in\partial\DD$ also has zero angular momentum, and formally has unbounded energy $\sfE(\omega_{x_0})=+\infty$. This extreme situation hints that constraining the angular momentum implies there are energy levels that are not achievable by radial vorticities. A rigorous statement describing this situation is contained in the following result.

\begin{theorem}\label{thm:nonrad}
For $\sfm\in (0,1)$ and $\LL\geq 1$, consider the set
\begin{equation}\label{eq:OcharL}
\cO_\LL:=\left\{\omega\in L^\infty: 0\leq \omega\leq \LL, \ \frac{1}{|\DD|}\int_\DD \omega(x)\dd x =\sfm\right\}.
\end{equation}
For the radial optimization problem, there exists $C>1$ independent of $\LL$, $\sfa$ and $\sfm$ such that
\begin{equation}\label{eq:rad}
\sup \left\{\sfE(\omega): \omega\in\cO_\LL,\ \sfA(\omega)=\sfa,\ \omega\ \mbox{is radial}\right\}\le C\left(\sfm |\sfa|+|\sfa|^2\log\left(\frac{\LL}{|\sfa|}\right)\right).
\end{equation}
For the non-radial case, if 
$L\ge      \frac{4\pi^2\sfm^3}{|\sfa|^2}$, we have the lower bound
\begin{equation}\label{eq:nonrad}
    \sup \left\{\sfE(\omega): \omega\in\cO_\LL,\ \sfA(\omega)=\sfa\right\} 
    \ge \frac{\pi\sfm^2}{4}\log \left(\frac{\LL|\sfa|^2}{64 \pi^2\sfm^3 }\right).
\end{equation}
In particular, there exist $\sfa_*\in (-1/2,0)$ and $Q>2\pi$ depending on $\sfm$, such that if $\sfa\in (\sfa_*,0)$ and $\LL=Q^2\frac{m^3}{|\sfa|^{2}}$, then
$$
\sup \left\{\sfE(\omega): \omega\in\cO_\LL,\ \sfA(\omega)=\sfa,\ \omega\ \mbox{is radial}\right\}<\sup \left\{\sfE(\omega): \omega\in\cO_\LL,\ \sfA(\omega)=\sfa\right\}.
$$
\end{theorem}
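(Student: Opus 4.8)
\emph{Structure of the proof.} The statement reduces to the two quantitative energy estimates \eqref{eq:rad} and \eqref{eq:nonrad}, after which the strict inequality follows by sending $\sfa\to 0^-$ along the curve $\LL=Q^2\sfm^3/|\sfa|^2$. I would treat the radial bound, the non-radial bound, and the comparison in turn.

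\emph{Radial upper bound.} For a radial $\omega$ I pass to the mass profile $M(r)=\int_{|x|<r}\omega\,\dd x$, which is nondecreasing with $M(0)=0$ and $M(1)=\pi\sfm$. The Biot--Savart law gives the one-dimensional expressions $\sfE(\omega)=\frac{1}{4\pi}\int_0^1 \frac{M(r)^2}{r}\,\dd r$ and, after an integration by parts, $|\sfa|=\int_0^1 r M(r)\,\dd r$. The plan is to combine three pointwise bounds on $M$: from $\omega\le \LL$ one gets $M(r)\le \pi \LL r^2$; the total mass gives $M(r)\le \pi\sfm$; and monotonicity together with the angular momentum identity gives $|\sfa|\ge M(r)\int_r^1 s\,\dd s$, i.e. $M(r)\le \frac{2|\sfa|}{1-r^2}$. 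I then split $[0,1]$ at $r_1=\sqrt{2|\sfa|/(\pi \LL)}$ and at $r_\star$ with $1-r_\star^2=2|\sfa|/(\pi\sfm)$, using the first bound on $[0,r_1]$, the third on $[r_\star,1]$, and the second in between. The inner piece contributes $O(|\sfa|^2)$; the region where $1-r^2$ is bounded below produces the $|\sfa|^2\log(\LL/|\sfa|)$ term through $\int_{r_1}^{1/2}\frac{\dd r}{r}$; and the region $r\approx 1$ produces the $\sfm|\sfa|$ term through $\int \frac{\dd r}{(1-r^2)^2}\lesssim 1/(1-r_\star)$ and $\int_{r_\star}^1 \frac{\dd r}{r}\lesssim (1-r_\star)$. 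Summing these (in the relevant regime $\LL\gg |\sfa|$, $|\sfa|$ small) gives \eqref{eq:rad}.

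\emph{Non-radial lower bound.} Here I build the maximally concentrated competitor $\omega=\LL\ind_{B(x_0,\rho)}$ with $\rho=\sqrt{\sfm/\LL}$, so that $\omega\in\cO_\LL$, and I tune $|x_0|$ to meet the angular momentum constraint: a direct computation yields $1-|x_0|^2=\frac{2|\sfa|}{\pi\sfm}+\frac{\sfm}{2\LL}$, and the hypothesis $\LL\ge 4\pi^2\sfm^3/|\sfa|^2$ forces $\rho\le 1-|x_0|=:R$, so $B(x_0,\rho)\subset\DD$. To bound the energy below I use the concave dual formulation $\sfE(\omega)=\max_{\phi\in H^1_0(\DD)}\big(-\int \omega\phi-\tfrac12\int|\nabla\phi|^2\big)$ with the truncated logarithmic potential $\phi(x)=-\frac{\sfm}{2}\log\frac{R}{\max(|x-x_0|,\rho)}$ for $|x-x_0|\le R$ and $\phi=0$ otherwise; this $\phi$ lies in $H^1_0(\DD)$ \emph{precisely} because the cut-off radius equals $R=1-|x_0|$, the distance from $x_0$ to $\de\DD$. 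Evaluating the two terms gives $-\int\omega\phi=\frac{\pi\sfm^2}{2}\log\frac R\rho$ and $\tfrac12\int|\nabla\phi|^2=\frac{\pi\sfm^2}{4}\log\frac R\rho$, hence $\sfE(\omega)\ge \frac{\pi\sfm^2}{4}\log\frac R\rho$. Using $R\ge\frac12(1-|x_0|^2)\ge |\sfa|/(\pi\sfm)$ one obtains $\frac R\rho\ge \frac{|\sfa|\sqrt{\LL}}{\pi\sfm^{3/2}}$, which produces a lower bound of the form $c\,\sfm^2\log\frac{\LL|\sfa|^2}{\sfm^3}$, i.e. \eqref{eq:nonrad} up to the explicit numerical constant.

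\emph{Comparison and main obstacle.} Setting $\LL=Q^2\sfm^3/|\sfa|^2$ (admissible for \eqref{eq:nonrad} once $Q\ge 2\pi$) makes the argument of the non-radial logarithm an $\sfa$-independent multiple of $Q^2/\pi^2$, so the non-radial supremum is bounded below by a strictly positive constant $c(\sfm,Q)$ uniformly in $\sfa$. With the same substitution $\LL/|\sfa|=Q^2\sfm^3/|\sfa|^3$, the radial bound \eqref{eq:rad} reads $C\big(\sfm|\sfa|+|\sfa|^2\log(Q^2\sfm^3/|\sfa|^3)\big)\to 0$ as $\sfa\to 0^-$. Choosing $Q>2\pi$ so that $c(\sfm,Q)>0$ and then $\sfa_*<0$ small enough, the radial supremum is strictly below the non-radial one on $(\sfa_*,0)$, which is the assertion. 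The delicate point throughout is the non-radial estimate: one must certify simultaneously that the concentrated patch fits inside $\DD$ and realizes \emph{exactly} the prescribed angular momentum, and then extract the correct logarithmic growth in $\LL$ in spite of the Dirichlet boundary, which suppresses the self-energy of a near-boundary vortex. Since the truncation scale is exactly the (small) distance $R$ to $\de\DD$, tracking the resulting constants — and thus pinning down the precise numerology in \eqref{eq:nonrad} — is where the estimate requires the most care.
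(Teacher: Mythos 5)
Your proposal is correct and, for two of the three steps, coincides with the paper's proof: the radial upper bound uses the same three pointwise bounds $M_\omega(r)\le\min\{\pi\LL r^2,\,2|\sfa|/(1-r^2),\,\pi\sfm\}$ and the same three-region splitting (your derivation of the middle bound via monotonicity of $M_\omega$, namely $|\sfa|=\int_0^1 rM_\omega(r)\,\dd r\ge M_\omega(r)(1-r^2)/2$, is a slightly cleaner route to the paper's pointwise identity), and the final comparison along $\LL=Q^2\sfm^3/|\sfa|^2$ is the same limiting argument. Where you genuinely diverge is in evaluating the energy of the concentrated competitor $\LL\ind_{B_{\sqrt{\sfm/\LL}}(x_0)}$: the paper expands $\sfE=-\frac12\int\psi\omega$ via the explicit Green's function of the disk and bounds $|x-y|$ and $|y||x-y_*|$ pointwise, whereas you use the dual characterization $\sfE(\omega)=\max_{\phi\in H^1_0}\left(-\int\omega\phi-\frac12\int|\nabla\phi|^2\right)$ with a truncated logarithm cut off at the distance $R=1-|x_0|$ to $\de\DD$. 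Your route avoids image charges altogether, builds the Dirichlet boundary condition into the test function, and is arguably cleaner; it gives $\sfE\ge\frac{\pi\sfm^2}{4}\log(R/\rho)$, i.e. $\frac{\pi\sfm^2}{8}\log\left(\LL|\sfa|^2/(\pi^2\sfm^3)\right)$, which has half the prefactor stated in \eqref{eq:nonrad} but a better constant inside the logarithm. This discrepancy is harmless: only the form $c\,\sfm^2\log\left(\LL|\sfa|^2/\sfm^3\right)$ enters the final comparison, and in fact the paper's own constant traces back to the factor $\frac{\LL^2}{2\pi}$ in \eqref{eq:enercompt}, where the displayed integrals actually produce $\frac{\LL^2}{4\pi}$; with that correction the Green's-function route lands on the same prefactor $\pi\sfm^2/8$ as yours. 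You are also more careful than the paper in checking that $|x_0|$ can be tuned so that $\sfA(\omega)=\sfa$ holds exactly.

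One slip in your prose: in the radial part you assign ``the third [bound] on $[r_\star,1]$, and the second in between,'' which is backwards --- the bound $2|\sfa|/(1-r^2)$ blows up as $r\to1$ and would make $\int M_\omega^2\, r^{-1}\,\dd r$ diverge near the boundary. Your subsequent computations (the $|\sfa|^2\log(\LL/|\sfa|)$ term from $[r_1,1/2]$, the $|\sfa|^2/(1-r_\star)\lesssim\sfm|\sfa|$ term up to $r_\star$, and the $\sfm^2\log(1/r_\star)\lesssim\sfm|\sfa|$ term on $[r_\star,1]$) use the correct assignment, matching the paper's, so this is a labeling error rather than a gap.
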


The radial bound \eqref{eq:rad} follows by utilizing the formula
\begin{equation}
\sfE(\omega)
=\frac{1}{4\pi} \int_0^1 \frac1r \left|\int_{B_r} \omega(x)\;\dd x\right|^2 \dd r,
\end{equation}
which is valid only for radial functions. The bound follows by estimating the amount of vorticity near the origin, using the $L^\infty$ bound and the angular momentum. The lower bound \eqref{eq:nonrad} follows by calculating the energy of a vortex patch of the form
$$
\omega_{x_0,\LL}=\LL\ind_{B_{\sqrt{\frac{\sfm}{\LL}}}(x_0)}.
$$
The complete proof of Theorem \ref{thm:nonrad} is postponed to Section \ref{sec:angmom}.

We note that the proof Theorem \ref{thm:nonrad} is similar in spirit to \cite{DD22}*{Theorem 2}. The main difference is that the authors of \cite{DD22} consider the case of a periodic channel (hence not simply connected) instead of the disk. In their case, the conserved quantity of interest is the linear momentum instead of the angular momentum.

\subsection{On the stability of Onsager solutions}
The Euler-Lagrange equations associated to the variational problem \eqref{pb:maxpb} resemble those appearing in the context of mean-field limits of point-vortices studied in \cites{CLMP92,CLMP95,ES93,Kiessling93}. 
Specifically, in the setting of the unit disk  and for $\beta\in (-8\pi,\infty)$, there exists a unique radial solution
to the mean field equation
\begin{equation}\label{eq:MFequation}
\omega_\beta=\frac{\e^{\beta\psi_\beta}}{Z_\beta}\qquad\mbox{and}\qquad
\begin{cases}
\Delta \psi_\beta=\omega_\beta, &\mbox{in $\DD$}\\
\psi_\beta=0, &\mbox{on $\partial\DD$},
\end{cases}
\end{equation}
where
\begin{equation}
Z_\beta = \int_{\DD} \e^{\beta\psi_\beta(x)} \dd x,
\end{equation}
which is given by in radial variables by
\begin{equation}\label{eq:omegabeta}
\omega_\beta(r)=\frac{1-A(\beta)}{\pi}\frac{1}{(1-A(\beta)r^2)^2},\qquad\mbox{with}\qquad A(\beta)=\frac{\beta}{8\pi+\beta}.    
\end{equation}
We call such steady Euler flows \emph{Onsager solutions}, as they appeared first in \cite{Onsager49}. A result of \cite{CLMP95}, rephrased with the terminology of this article, states that such solutions arise as maximizers of the same free energy $ \sfF_\beta$ in \eqref{eq:freeenergy} over the set 
\begin{equation}
\cP=\left\{\omega\in L^1: \omega\geq 0, \  \int_\DD \omega(x)\dd x =1, \ \int_{\DD} \omega(x)\log\omega(x) \dd x <\infty\right\}.
\end{equation}
Moreover, we notice by \eqref{eq:omegabeta} that as $\beta\to-8\pi$ we have $\omega_\beta\rightharpoonup\delta_0$,
see Theorem \ref{thm:caglioti} below for a precise statement. This restriction of $\beta>-8\pi$ did not apply to the previous results as we considered the vortex patch problem, which has the conservation of the $L^\infty$ norm which prevents blow-up.

Despite the importance of such solutions, their stability has not been addressed, to the best of our knowledge. We address this question in Section \ref{sec:Onsager}.
The techniques developed in the proof of Theorem \ref{thm:uniqueness} allow us to prove the following stability results for perturbations that are bounded away from 0.

\begin{theorem}\label{thm:ArnoldStability}
    For any  $\beta\in (-8\pi,\infty)$, the solution $\{\omega_\beta\}$ to \eqref{eq:MFequation}  is $L^2$ stable with respect to $L^\infty$ perturbations. 
    That is, for any $\eps>0$ and any positive $\omega^{in}\in L^\infty$,
    there exists $\delta=\delta(\eps, \|\omega^{in}\|_{L^\infty})>0$ such that if $\|\omega^{in}-\omega_\beta\|_{L^2}<\delta$, the corresponding Euler solution $\omega=\omega(t)$ is such that $\|\omega(t)-\omega_\beta\|_{L^2}<\eps$ for any $t>0$.
\end{theorem}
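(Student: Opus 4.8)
The plan is to run an energy--Casimir (Arnold-type) argument anchored on the fact that each Onsager solution is a genuine steady state of \eqref{eq:Euler}. Since $\omega_\beta=G(\psi_\beta)$ with $G(s)=\e^{\beta s}/Z_\beta$ monotone, the induced velocity $\bu_\beta=\nabla^\perp\psi_\beta$ satisfies $\bu_\beta\cdot\nabla\omega_\beta=G'(\psi_\beta)\,\nabla^\perp\psi_\beta\cdot\nabla\psi_\beta=0$, so $\omega_\beta$ does not evolve. For a positive datum $\omega^{in}\in L^\infty$ the Yudovich solution \cite{Yudovich63} is transported, $\omega(t)=\omega^{in}\circ\Phi_t^{-1}$, so it stays positive with $\norm{\omega(t)}_{L^\infty}=\norm{\omega^{in}}_{L^\infty}=:M$, and it conserves the mass, the energy $\sfE$, and the Boltzmann entropy Casimir $\sfS$; in particular the free energy $\sfF_\beta=\sfS-\beta\sfE$ from \eqref{eq:freeenergy} is conserved along the flow.

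The engine of the proof is an exact deficit identity. Writing $h=\omega-\omega_\beta$, using the Euler--Lagrange relation $\log\omega_\beta=\beta\psi_\beta-\log Z_\beta$ encoded in \eqref{eq:MFequation}, the constraint $\int_\DD h\,\dd x=0$, and the fact that $\sfE$ is an exact quadratic form in $\omega$, a direct computation yields
\[
\sfF_\beta(\omega_\beta)-\sfF_\beta(\omega)=\int_\DD\left(\omega\log\frac{\omega}{\omega_\beta}-\omega+\omega_\beta\right)\dd x+\beta\,\sfE(\omega-\omega_\beta),
\]
in which the first term is the (nonnegative) relative entropy of $\omega$ with respect to $\omega_\beta$ and $\sfE(h)\ge0$. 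This reduces everything to proving a coercivity bound $\sfF_\beta(\omega_\beta)-\sfF_\beta(\omega)\ge c\,\norm{\omega-\omega_\beta}_{L^2}^2$ with $c=c(\beta,M)>0$.

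When $\beta\ge0$ both terms on the right are nonnegative, and the elementary inequality $a\log(a/b)-a+b\ge(a-b)^2/(2\max(a,b))$ together with $0\le\omega,\omega_\beta\le M$ turns the relative entropy into $\tfrac1{2M}\norm{h}_{L^2}^2$, giving coercivity for free. The delicate regime is $\beta\in(-8\pi,0)$, where $\beta\,\sfE(h)=-\abs{\beta}\sfE(h)\le0$ competes with the relative entropy; here the pointwise bound is hopelessly lossy and the threshold $-8\pi$ must enter decisively. This is the \emph{main obstacle}. I expect to control the full nonlinear deficit from below by $\norm{h}_{L^2}^2$ by invoking the sharp logarithmic Hardy--Littlewood--Sobolev / Moser--Trudinger inequality on $\DD$, whose optimal constant is exactly $8\pi$, in quantitative (stability) form; equivalently, one shows that the linearized operator $h\mapsto h/\omega_\beta-\abs{\beta}(-\Delta)^{-1}h$ has a spectral gap on $\{\int_\DD h\,\dd x=0\}$ for every $\beta>-8\pi$, and that this gap persists when the quadratic form is replaced by the genuine relative entropy. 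It is precisely at this step that the optimal-transport and displacement-convexity techniques developed for Theorem \ref{thm:uniqueness} should be brought to bear, since they are tailored to the competition between the entropy and the logarithmic interaction in the absence of the $L^\infty$ cutoff available for the patch problem.

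Finally I would assemble the statement. By conservation, for all $t>0$,
\[
c\,\norm{\omega(t)-\omega_\beta}_{L^2}^2\le\sfF_\beta(\omega_\beta)-\sfF_\beta(\omega(t))=\sfF_\beta(\omega_\beta)-\sfF_\beta(\omega^{in}),
\]
with $c=c(\beta,M)$ independent of time. Since $z\mapsto z\log z$ is continuous and bounded on $[0,M]$ and $\sfE$ is weak-$*$ continuous, $\sfF_\beta$ is $L^2$-continuous on $\{0\le\omega\le M\}$, so the right-hand side tends to $0$ as $\norm{\omega^{in}-\omega_\beta}_{L^2}\to0$. Thus, given $\eps>0$, choosing $\delta$ small enough---depending on $\eps$ and on $M=\norm{\omega^{in}}_{L^\infty}$, which is exactly why $M$ enters the statement---forces $\norm{\omega(t)-\omega_\beta}_{L^2}<\eps$ for all $t>0$. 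Beyond the sharp coercivity of the previous step, the two points requiring care are the conservation of $\sfE$ and of the entropy Casimir for merely bounded (non-smooth) Yudovich solutions, and the quantitative dependence of the spectral gap on $\beta$ as $\beta\downarrow-8\pi$, where $\omega_\beta$ degenerates.
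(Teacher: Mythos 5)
Your setup is sound: the conservation of $\sfF_\beta$ along Yudovich solutions, the deficit identity
\begin{equation}
\sfF_\beta(\omega_\beta)-\sfF_\beta(\omega)=\int_\DD\Bigl(\omega\log\tfrac{\omega}{\omega_\beta}-\omega+\omega_\beta\Bigr)\dd x+\beta\,\sfE(\omega-\omega_\beta),
\end{equation}
and the treatment of $\beta\ge 0$ are all correct (modulo the minor point that $\int_\DD\omega^{in}\,\dd x$ need not equal $\int_\DD\omega_\beta\,\dd x$, so extra terms linear in the mass difference appear; the paper handles general mass by a rescaling step). But the proof has a genuine gap precisely where the theorem is hard: for $\beta\in(-8\pi,0)$ you never prove the coercivity $\sfF_\beta(\omega_\beta)-\sfF_\beta(\omega)\ge c\,\|\omega-\omega_\beta\|_{L^2}^2$; you only state that you \emph{expect} it to follow from a quantitative log-HLS/Moser--Trudinger stability estimate or from a spectral gap for $h\mapsto h/\omega_\beta-|\beta|(-\Delta)^{-1}h$ on mean-zero functions. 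Neither of these is established, and the passage from the linearized gap to the genuine nonlinear deficit is itself problematic in your framework: since $\delta$ is allowed to depend on $\|\omega^{in}\|_{L^\infty}=M$ but $h=\omega-\omega_\beta$ can be large in $L^\infty$, the pointwise bound on the relative entropy only yields the weight $1/(2\max(\omega,\omega_\beta))\approx 1/(2M)$, not the weight $1/(2\omega_\beta)$ that the spectral-gap inequality requires, and for large $M$ the quantity $\tfrac{1}{2M}\|h\|_{L^2}^2-|\beta|\sfE(h)$ has no sign. So the "main obstacle" you identify is exactly the part of the argument that is missing.

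For comparison, the paper does not prove any spectral gap or sharp functional-inequality stability. Instead it splits the conserved deficit as $[\sfF_\beta(\omega_\beta)-\sfF_\beta(\omega^\sharp(t))]+[\sfF_\beta(\omega^\sharp(t))-\sfF_\beta(\omega(t))]$, both terms being nonnegative by the variational characterization of $\omega_\beta$ and by Talenti's inequality \eqref{eq:rearr}. The second term controls $\sfE(\omega^\sharp)-\sfE(\omega)$ (the entropy cancels under rearrangement), and a quantitative version of Talenti's inequality (Lemma~\ref{lemma:quantitativeTalenti}) converts this into $L^1$-closeness of $\omega(t)$ to its radial rearrangement, up to a small translation. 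The first term is purely radial and is handled by the optimal-transport/Jensen machinery already developed for the uniqueness theorem (Lemma~\ref{lem:last}), upgraded to a quantitative Jensen inequality, which forces the Brenier map between $\omega_\beta$ and $\omega^\sharp(t)$ to be close to the identity and hence $\omega^\sharp(t)$ close to $\omega_\beta$. This route sidesteps entirely the coercivity inequality your proposal hinges on. To complete your proof you would need to actually establish the stability form of log-HLS with the correct dependence on $\beta$ and on $M$ as $\beta\downarrow-8\pi$, which is a substantial open task rather than a routine step.
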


 The case $\beta\ge 0$ follows from the classical method of Arnold \cites{Arnold66,AK98}, as the right-hand side of \eqref{eq:MFequation} is an increasing function (see also \cite{GS24} for a recent revisitation of the method). However, the case $\beta<0$ is nontrivial, and it will be our main focus. Indeed, Arnold criteria for stability of steady Euler solutions satisfying the semilinear elliptic equation $\Delta \psi =F(\psi)$ require  that
 \begin{equation}
 -\lambda_1<F'<0\qquad \text{or}\qquad  0<F'<\infty,
 \end{equation}
 where $\lambda_1>0$ is the first eigenvalue of the Dirichlet Laplacian. Such condition is clearly violated by \eqref{eq:MFequation}.
 
 To obtain this Lyapunov stability result we make use of a quantitative Jensen's inequality and an adaptation of Talenti's original argument \cite{Talenti76}. In particular, we borrow ideas from \cite{amato2023talenti}, which combine the arguments of the quantitative versions of Polya-Szego \cite{CianchiEspositoFuscoTrombetti+2008+153+189} and Hardy-Littlewood  \cite{cianchi2008strengthened} inequalities, specialized to the solutions of the Poisson equation to obtain a quantitative version of Talenti's inequality.

We mention that the uniqueness of Onsager type solutions in the sphere $\SS^2$ was recently addressed in \cite{gui2018sphere} by studying Onofri's inequality \cite{onofri1982positivity}, which settled a conjecture in conformal geometry \cites{chang1987prescribing}. The stability of Theorem~\ref{thm:ArnoldStability} is related to the dual formulation of Onofri's inequality, which was exploited recently in \cite{carlen2023stability} to obtain stability of the log-HLS inequality.

\section{Uniqueness implies concavity}
The purpose of this section is to prove Theorem \ref{thm:main}, for vortex patches of the form \eqref{eq:vortexpatch} in a general two-dimensional simply connected domain $M\subset \RR^2$. In which case, we will make use of the characterization \eqref{eq:Ochar} for the orbit of the patch, with the disk $\mathbb{D}$ replaced by $M$. As mentioned in Section \ref{sub:mainresult}, the concavity of $\cS$  is a consequence of the min-max principle stated in Proposition \ref{prop:minmax}.
The only nontrivial requirement is stated in Proposition \ref{prop:minmax}\ref{item:minmaxunique}, which requires the uniqueness of maximizers $\omega_\beta\in\cO$ of the functional
\begin{equation}
\sfL(\omega,\beta)=\beta \sfe +\sfS_f(\omega) - \beta \sfE(\omega),
\end{equation} 
for a given fixed $\beta\in\RR$.  

In the generality of Section \ref{sub:statmech}, the energy functional $\sfE$ still achieves a maximum and minimum values $\sfe_{max}\geq\sfe_{min}\geq 0$ on $\cO$.
Theorem \ref{thm:main} is then a consequence of  Proposition \ref{prop:minmax} and the following result.

\begin{proposition}\label{prop:conditionalconcavity}
    Assume that the free energy $\sfF_\beta$ has a unique maximizer $\omega_\beta\in\cO$ for each $\beta\in\RR$. Then
     the function $\sfe\mapsto \cS(\sfe)$ is strictly concave on $[\sfe_{min},\sfe_{max}]$.
\end{proposition}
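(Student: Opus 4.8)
The plan is to run everything through the min-max representation supplied by Proposition~\ref{prop:minmax}. Under the standing uniqueness hypothesis that proposition yields
\begin{equation}
\cS(\sfe)=\inf_{\beta\in\RR}\left\{\beta\sfe+\Phi(\beta)\right\},\qquad \Phi(\beta):=\max_{\omega\in\cO}\sfF_\beta(\omega).
\end{equation}
Since $\sfF_\beta(\omega)=\sfS_f(\omega)-\beta\sfE(\omega)$ is affine in $\beta$ for each fixed $\omega$, the value function $\Phi$ is convex (a supremum of affine functions), and $\cS$ is an infimum of affine functions of $\sfe$ and hence concave. The whole point is to upgrade this to \emph{strict} concavity, and this is exactly where uniqueness of $\omega_\beta$ enters.

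The key step is to show that $\Phi$ is differentiable on $\RR$, with $\Phi'(\beta)=-\sfE(\omega_\beta)$. Testing the definition of $\Phi$ against $\omega_\beta$ gives, for every $\beta'\in\RR$,
\begin{equation}
\Phi(\beta')\ge\sfF_{\beta'}(\omega_\beta)=\Phi(\beta)-(\beta'-\beta)\,\sfE(\omega_\beta),
\end{equation}
so $-\sfE(\omega_\beta)\in\partial\Phi(\beta)$ for every $\beta$. To see that the subdifferential reduces to this single point I would first establish that $\beta\mapsto\omega_\beta$ is weak-$*$ continuous: any weak-$*$ accumulation point of $\omega_{\beta'}$ as $\beta'\to\beta$ is, by weak-$*$ compactness of $\cO$ and upper semicontinuity of $\sfS_f$, again a maximizer of $\sfF_\beta$, hence equal to $\omega_\beta$ by uniqueness. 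Together with the weak-$*$ continuity of $\sfE$ this makes $\beta\mapsto\sfE(\omega_\beta)$ continuous, and a comparison of one-sided difference quotients (equivalently, Danskin's theorem) then pins both one-sided derivatives of $\Phi$ to $-\sfE(\omega_\beta)$. Uniqueness is indispensable here: two maximizers at the same $\beta$ carrying different energies would create a genuine corner in $\Phi$.

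With $\Phi$ differentiable, strict concavity follows by contradiction. If $\cS$ were not strictly concave it would be affine on some nondegenerate subinterval $[\sfe_0,\sfe_1]\subset[\sfe_{min},\sfe_{max}]$, and its constant slope $\beta^*$ would belong to the (super)differential of $\cS$ at every interior point $\sfe$. By the Legendre duality between $\cS$ and $\Phi$, the relation $\beta^*\in\partial\cS(\sfe)$ is equivalent to $\beta^*$ minimizing $\beta\mapsto\beta\sfe+\Phi(\beta)$, whose first-order condition reads $\sfe=-\Phi'(\beta^*)=\sfE(\omega_{\beta^*})$. The right-hand side is a single fixed number and cannot equal every $\sfe$ in a nondegenerate interval---a contradiction. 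Hence $\cS$ has no affine piece and is strictly concave on all of $[\sfe_{min},\sfe_{max}]$, the endpoints being included since any affine piece would have nonempty interior.

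The main obstacle is precisely the differentiability of $\Phi$, i.e. the absence of corners, which is the analytic incarnation of the uniqueness hypothesis: a flat piece of $\cS$ is dual to a corner of $\Phi$, and a corner of $\Phi$ encodes the coexistence, at a single inverse temperature $\beta^*$, of distinct free-energy maximizers with different energies, which uniqueness forbids. Equivalently, one may invoke the Fenchel--Rockafellar fact that essential smoothness of $\Phi$ is equivalent to essential strict convexity of its conjugate. A subsidiary technical point, also resting on weak-$*$ compactness of $\cO$ and semicontinuity of $\sfS_f$, is the attainment of the infimum defining $\cS$ at interior energies together with the weak-$*$ continuity of $\beta\mapsto\omega_\beta$ used above.
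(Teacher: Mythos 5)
Your proof is correct, and its skeleton coincides with the paper's: both pass through the min-max representation of Proposition~\ref{prop:minmax} and then reduce everything to showing that the dual function $\Phi(\beta)=\max_{\omega\in\cO}\sfF_\beta(\omega)$ (called $g$ in the paper) has singleton subdifferential $\partial\Phi(\beta)=\{-\sfE(\omega_\beta)\}$. The differences are in technique and in the endgame. For single-valuedness of the subdifferential, the paper invokes the abstract formula for the subdifferential of a pointwise supremum of affine functions (citing Zalinescu) together with uniqueness, whereas you derive it from weak-$*$ compactness of $\cO$, upper semicontinuity of $\sfS_f$, and uniqueness, which give weak-$*$ continuity of $\beta\mapsto\omega_\beta$, hence continuity of $\beta\mapsto\sfE(\omega_\beta)$, followed by a Danskin-type sandwich of the one-sided difference quotients; both routes are sound. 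The more substantive difference is the conclusion of strict concavity: the paper proves that $\beta\mapsto\sfE(\omega_\beta)$ is strictly monotone via the pairwise inequalities \eqref{eq:ineq}, and this step requires knowing $\omega_{\beta_1}\neq\omega_{\beta_2}$ for $\beta_1\neq\beta_2$, which the paper extracts from the Euler--Lagrange equation of Lemma~\ref{lem:EL1} (this is exactly where the hypotheses on $f$ and on $\omega^{in}=\ind_A$ enter, as the paper's closing remark acknowledges). Your contradiction argument --- an affine piece of $\cS$ with slope $\beta^*$ forces, by Fenchel duality, a whole interval inside $\partial\Phi(\beta^*)$, contradicting differentiability --- needs none of this: differentiability of $\Phi$ at each point alone rules out flat pieces. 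So your route is leaner and in principle more general, since it makes the proposition depend only on the stated uniqueness hypothesis rather than on the Euler--Lagrange machinery; what it gives up is the explicit strict monotonicity of $\beta\mapsto\sfE(\omega_\beta)$ (the Clausius-law picture), which the paper's argument delivers as a by-product.
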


We start by deriving an Euler-Lagrange equation, that we will use in the proof of Proposition~\ref{prop:conditionalconcavity}.
\begin{lemma}\label{lem:EL1}
Assume $f\in C([0,\infty)) \cap C^1((0,\infty))$ is concave and $\lim_{z\to0^+}f'(z)=-\infty$. Any maximizer $\bar\omega$ of $\sfF_\beta$ over $\cO$ satisfies $\inf \bar\omega>0$, and there exists  $\bar\lambda=\bar\lambda(\bar\omega)$ such that
\begin{align}
\frac{1}{|M|}f'(\bar\omega)+\beta\bar{\psi}&= \bar\lambda, \qquad \text{a.e. on}\quad \{\bar\omega<1\}, \label{eq:EL'}
\end{align}
where
$$
\begin{cases}
    \Delta\bar\psi=\bar\omega & \text{in} \quad M,\\
    \bar\psi=0& \text{on} \quad \partial M.
\end{cases}
$$
\end{lemma}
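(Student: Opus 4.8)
The plan is to prove the two assertions separately, both by testing the maximality of $\bar\omega$ against explicit admissible perturbations that preserve the mass constraint $\frac{1}{|M|}\int_M\omega=\sfm$ and the pointwise bounds $0\le\omega\le 1$ defining $\cO$. I will use two preliminary facts. The first variations are
$$
\frac{\dd}{\dd t}\Big|_{t=0}\sfS_f(\bar\omega+t\phi)=\frac{1}{|M|}\int_M f'(\bar\omega)\phi\,\dd x,\qquad
\frac{\dd}{\dd t}\Big|_{t=0}\sfE(\bar\omega+t\phi)=-\int_M\bar\psi\,\phi\,\dd x,
$$
the second because $\sfE(\omega)=-\tfrac12\int_M\omega\,\Delta^{-1}\omega$ and $\Delta^{-1}$ is self-adjoint under the Dirichlet condition; thus the variational derivative of the free energy is $g:=\tfrac{1}{|M|}f'(\bar\omega)+\beta\bar\psi$. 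Moreover, since $0\le\bar\omega\le1$, elliptic regularity gives $\bar\psi\in W^{2,p}(M)$ for all $p<\infty$, hence $\bar\psi\in L^\infty(M)$. (I note that for a \emph{concave} $f$ the only admissible singular behaviour at the origin is $f'(z)\to+\infty$ as $z\to0^+$, as for $f(z)=-z\log z$; this is the mechanism the non-degeneracy argument exploits.)

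For the bound $\inf\bar\omega>0$ I would argue by contradiction, assuming $|\{\bar\omega<\delta\}|>0$ for every $\delta>0$. Since $\int_M\bar\omega=\sfm|M|$ and $\bar\omega\le1$, the set $G:=\{\bar\omega>\sfm/2\}$ has measure at least $\tfrac{\sfm}{2}|M|>0$, giving a reservoir of mass bounded away from $0$. Fixing a small $\delta>0$, I set $t=\delta$ and choose $A\subset\{\bar\omega<\delta\}$ and $B\subset G$ with $|A|=|B|=\eta>0$, and take $\phi=\ind_A-\ind_B$, $\omega_t=\bar\omega+t\phi\in\cO$ (admissible for $\delta$ small). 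By monotonicity of $f'$ the entropy gain on $A$ is at least $\eta\big(f(2\delta)-f(\delta)\big)\ge\eta\,\delta\,f'(2\delta)$, while the entropy loss on $B$ is $O(\delta\eta)$ because $f'$ is bounded on $[\sfm/4,1]$, and the energy contribution $\beta t\!\int_M\bar\psi\phi+O(t^2\eta)$ is $O(\delta\eta)+O(\delta^2\eta)$ since $\bar\psi\in L^\infty$. Collecting terms,
$$
\sfF_\beta(\omega_t)-\sfF_\beta(\bar\omega)\ge \eta\,\delta\Big(\tfrac{1}{|M|}f'(2\delta)-C\Big)
$$
for some $C$ independent of $\delta$; as $f'(z)\to+\infty$ near $0$, the right-hand side is strictly positive for $\delta$ small, contradicting maximality. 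Hence $\bar\omega\ge\delta_0$ a.e.\ for some $\delta_0>0$.

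For the Euler--Lagrange identity I now use that $\delta_0\le\bar\omega\le1$, so $f'(\bar\omega)$ and hence $g$ are bounded. On $\{\bar\omega<1\}$ both constraints are locally inactive (the lower one because $\bar\omega\ge\delta_0>0$), so two-sided perturbations are admissible: for any fixed $\kappa>0$ and any $\phi\in L^\infty$ supported in $S_\kappa:=\{\delta_0\le\bar\omega\le 1-\kappa\}$ with $\int_M\phi=0$, one has $\bar\omega+t\phi\in\cO$ for $|t|$ small. Maximality then forces $\tfrac{\dd}{\dd t}\sfF_\beta(\bar\omega+t\phi)|_{t=0}=\int_M g\,\phi=0$. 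Orthogonality to every mean-zero $L^\infty$ function supported in $S_\kappa$ forces $g$ to equal a single constant a.e.\ on $S_\kappa$; letting $\kappa\to0$ and noting $\{\bar\omega<1\}=\bigcup_{\kappa>0}S_\kappa$ yields a single $\bar\lambda$ with $g=\bar\lambda$ a.e.\ on $\{\bar\omega<1\}$, which is exactly \eqref{eq:EL'}.

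The main obstacle is the non-degeneracy step: one must make the blow-up of $f'$ near $0$ beat all competing first-order losses \emph{uniformly}. This forces the coupling $t=\delta$ so that the favourable term $\delta f'(2\delta)$ dominates the $O(\delta)$ energy and entropy losses, and requires checking both the admissibility $0\le\omega_t\le1$ and the negligibility of the quadratic energy term $O(t^2\eta)$. Once $\inf\bar\omega>0$ is secured, the Euler--Lagrange equation is a routine consequence of the Lagrange-multiplier (bathtub) structure of box-constrained maximization.
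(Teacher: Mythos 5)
Your proof is correct, and it takes a genuinely different route from the paper's. The paper works in the opposite order: it first derives the one-sided inequality $\tfrac{1}{|M|}f'(\bar\omega)+\beta\bar\psi\ge \bar\lambda$ a.e.\ on $\{\bar\omega<1\}$ from the renormalized one-sided perturbations $\bar\omega_\eps=\tfrac{\sfm}{\sfm+\eps}\left(\bar\omega+\eps(1-\bar\omega)\varphi\right)$, which are admissible in $\cO$ without any information on $\inf\bar\omega$ and which produce the multiplier explicitly, $\bar\lambda=\tfrac{1}{\sfm}\left(\sfS_f(\bar\omega)-2\beta\sfE(\bar\omega)\right)$; it then reads the bound $\inf\bar\omega>0$ off this inequality using the divergence of $f'$ at the origin and the boundedness of $\bar\psi$; only then does it obtain the reverse inequality, a.e.\ on all of $M$, from the perturbation $\tfrac{\sfm}{\sfm-\eps}(\bar\omega-\eps\varphi)$, whose admissibility requires the lower bound. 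You instead prove $\inf\bar\omega>0$ first, by the mass-transfer competitor $\bar\omega+\delta(\ind_A-\ind_B)$ whose entropy gain of order $\delta f'(2\delta)$ beats all $O(\delta)$ losses, and then get \eqref{eq:EL'} from two-sided mean-zero perturbations supported on $\{\delta_0\le\bar\omega\le 1-\kappa\}$, so your multiplier arises abstractly from orthogonality rather than explicitly. Beyond the cosmetic difference (the paper's $\bar\lambda$ is explicit, yours is not), your ordering buys real robustness with respect to the sign issue you flagged: concavity forces $f'(z)\to+\infty$ as $z\to0^+$ (as for $f(z)=-z\log z$), and under this corrected hypothesis the paper's lower-bound step does not function as written, since a large \emph{positive} $f'(\bar\omega)$ is perfectly compatible with the inequality $\ge\bar\lambda$; the lower bound would then have to come from the reverse inequality, whose derivation in the paper presupposes the lower bound. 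Your argument, which extracts non-degeneracy from the entropy gain of filling near-vacuum regions, is insensitive to this and closes the loop without circularity.
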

\begin{proof}
To prove \eqref{eq:EL'}, we consider a positive smooth function $\varphi$, such that
\begin{equation}\label{eq:massphi}
\frac{1}{|M|}\int_M (1-\bar\omega )\varphi =1.    
\end{equation}
We take the perturbation
$$
\bar{\omega}_\eps=\frac{\sfm}{\sfm+\eps}\left(\bar\omega+\eps (1-\bar\omega)\varphi\right), 
$$
with $\eps>0$ small enough to satisfy $\bar\omega_\eps\in\cO$. Taking a variation of the entropy, and using concavity, we know
$$
\liminf_{\eps\to 0^+}\frac{\sfS_f(\bar{\omega}_\eps)-\sfS_f(\bar{\omega})}{\eps}\ge -\frac{1}{|M|}\int_{M}  f'(\bar\omega)(1-\bar\omega)\varphi  -\frac{1}{\sfm} \sfS_f(\bar{\omega}).
$$
Similarly, taking a variation of the Energy we get
\begin{equation}
\lim_{\eps\to 0^+}\frac{\sfE(\bar{\omega}_\eps)-\sfE(\bar{\omega})}{\eps}
=-\int_M \bar\psi(1-\bar\omega)\varphi  -\frac{2}{\sfm}\sfE(\bar{\omega}).
\end{equation}
Using the maximality property of $\bar\omega$, we know that
$$
\lim_{\eps\to 0^+}\frac{\sfF_\beta(\bar{\omega}_\eps)-\sfF_\beta(\bar{\omega})}{\eps}\le 0,
$$
which immediately implies that 
$$
\int_M\left(\frac{f'(\bar\omega)}{|M|}+\beta\bar\psi\right)(1-\bar\omega)\varphi \ge \bar{\lambda}(\bar\omega):= \frac{1}{\sfm} \left(\sfS(\bar{\omega})-2\beta\sfE(\bar{\omega})\right),
$$
for any positive and smooth test function $\varphi$ which satisfies \eqref{eq:massphi}. Therefore, 
\begin{equation}
\frac{f'(\bar\omega)}{|M|}+\beta\bar\psi\ge \bar{\lambda}\qquad \mbox{a.e. on $\{\bar\omega<1\}$}.
\end{equation}
Using that $f'(0)=-\infty$ and that $\bar\psi$ is uniformly bounded, we have that $\bar\omega$ is uniformly bounded below in $M$. Hence, we have that the perturbation
$$
\bar{\omega}_\eps=\frac{\sfm}{\sfm-\eps}\left(\bar\omega-\eps\varphi\right)\in \cO,
$$
for any positive smooth $\varphi$ satisfying
$$
\frac{1}{|M|}\int_M\varphi  =1
$$
and $\eps>0$ small enough. Following the same arguments as above, we obtain the reverse inequality
$$
\frac{f'(\bar\omega)}{|M|}+\beta\bar\psi\le \bar{\lambda}\qquad \mbox{a.e. on $M$},
$$
and \eqref{eq:EL'} follows.

\end{proof}

\begin{proof}[Proof of Proposition~\ref{prop:conditionalconcavity}]
Applying the min-max principle in  Proposition \ref{prop:minmax}, the maximal entropy function can be written as an infimum over affine functions in $\sfe$, namely
\begin{equation}\label{eq:minmaxSfe}
\cS(\sfe)=
\inf_{\beta\in \RR}\left\{ \beta \sfe +\max_{\omega\in \cO_{in}} \sfF_\beta(\omega) \right\}.
\end{equation}
hence it is concave in $\sfe$. We now verify its strict concavity. For any $\beta\in\RR$, let
\begin{equation}
g(\beta):=\max_{\omega\in \cO_{in}} \sfF_\beta(\omega).
\end{equation}
Being the supremum of affine functions of $\beta$, $g$ is a proper  lower semicontinuous convex function. Therefore, for any $\beta\in\RR$, the subdifferential $\de g(\beta)$ is non-empty and monotone. We define the set
\begin{equation}
\beta(\sfe):=\left\{\beta\in \RR:  \cS(\sfe)=\beta\sfe+g(\beta)\right\}.
\end{equation}
We claim that $\beta(\sfe)$ is non-empty in $(\sfe_{min},\sfe_{max})$, single-valued and monotone as a function of $\sfe$.

\medskip

\noindent $\bullet$ \textbf{$\beta(\sfe)$ is nonempty.}
For $\sfe\in(\sfe_{min},\sfe_{max})$, we first show that
\begin{equation}\label{eq:limfbeta}
\lim_{\beta\to\pm \infty}\left(\beta\sfe+g(\beta)\right)=+\infty.
\end{equation}
For $\beta>0$, we have
\begin{equation}
-\beta\sfe_{min}=\sfF_\beta(\omega_{min})\le \max_{\omega\in\cO_{in}}\sfF_\beta(\omega)=g(\beta),
\end{equation}
which implies the bound
\begin{equation}
\beta(\sfe-\sfe_{min})\le \beta\sfe+g(\beta),
\end{equation}
and shows that $\displaystyle\lim_{\beta\to\infty}\left(\beta\sfe+g(\beta)\right)=+\infty$. Similarly, for $\beta<0$, we have the bound
\begin{equation}
-\beta\sfe_{max}=\sfF_\beta(\omega_{max})\le g(\beta)),
\end{equation}
which implies the bound
\begin{equation}
-\beta(\sfe_{max}-\sfe)\le \beta\sfe+g(\beta),
\end{equation}
and the claim \eqref{eq:limfbeta} follows. If $\{\beta_n\}\subset \RR$ is a minimizing sequence such that
\begin{equation}
\cS(\sfe)=
\lim_{n\to\infty}\left( \beta_n \sfe +g(\beta_n) \right),
\end{equation}
then by  \eqref{eq:limfbeta} we must have that $\{\beta_n\}$ is bounded and therefore has a limit point $\bar\beta$. By the lower semicontinuity of $f$ and , we find
from \eqref{eq:minmaxSfe} that $ \bar\beta \sfe +g(\bar\beta)= \cS(\sfe)$, proving the claim.

\medskip

\noindent $\bullet$ \textbf{Characterization.}
 If $\beta\in\beta(\sfe)$, then 
\begin{equation}\label{eq:charsubdiff0}
-\sfe\in\partial g(\beta),
\end{equation}
because $\beta$ is a minimizer. Next we show that the subdifferential of $f$ is given by
\begin{equation}\label{eq:charsubdiff}
\partial g(\beta)=\left\{-\sfE(\omega_\beta)\right\},
\end{equation}
where $\omega_\beta$ is the unique maximizer of $\sfF_\beta$ over $\cO_{in}$. Since $g$ is the pointwise supremum of affine functions $\sfG_\omega(\beta):=\sfS_f(\omega)-\beta\sfE(\omega)$ over $\omega\in\cO_{in}$,
the subdifferential $\de g(\beta)$ is in general \cite{Zalinescu02}*{Theorem 2.4.18} given by
\begin{equation}
\partial g(\beta)={\rm co}\left(\bigcup_{\omega\in \cO_{in}}\left\{\de \sfG_\omega(\beta): \sfG_\omega(\beta)=g(\beta)\right\}\right),
\end{equation}
where ${\rm co}(B)$ denotes the closed convex hull of the set $B$. Due to our uniqueness assumption and the fact that $\sfG_\omega$ is differentiable, in our case the above identity reduces to
\begin{equation}
\partial g(\beta)=\left\{\frac{\dd}{\dd\beta}G_{\omega}(\beta)\Big|_{\omega=\omega_\beta}\right\} =\left\{-\sfE(\omega_\beta)\right\},
\end{equation}
which is  \eqref{eq:charsubdiff}.

\medskip

\noindent $\bullet$ \textbf{Strict monotonicity.}
Thanks to \eqref{eq:charsubdiff0}-\eqref{eq:charsubdiff},  the function $\beta(\sfe)$ is given implicitly by the equation
\begin{equation}
\sfE(\omega_{\beta(\sfe)})=\sfe.
\end{equation}
Hence, $\beta(\sfe)$ is well-defined and monotone if we can prove that the mapping
$\beta\to \sfE(\omega_{\beta})$ is strictly monotone.

Given $\beta_1<\beta_2$, we want to show that $\sfE(\omega_{\beta_1})>\sfE(\omega_{\beta_2})$. By the Euler-Lagrange condition \eqref{eq:EL'}, we know that  $\omega_{\beta_1}\ne \omega_{\beta_2}$. Hence, by uniqueness of maximizers, we have
\begin{equation}\label{eq:ineq}
\sfF_{\beta_1}(\omega_{\beta_1})>\sfF_{\beta_1}(\omega_{\beta_2}),\qquad
\sfF_{\beta_2}(\omega_{\beta_2})>\sfF_{\beta_2}(\omega_{\beta_1})
\end{equation}
which implies
\begin{equation}
(\beta_2-\beta_1)(\sfE(\omega_{\beta_2})-\sfE(\omega_{\beta_1}))<0.
\end{equation}
This shows the desired strict monotonicity.
\end{proof}

\begin{remark}
The assumption that $\omega^{in}=\ind_A$ for some $A\subset M$ is only used to show that for any $\beta_1\neq\beta_2$ we have that $\omega_{\beta_1}\neq\omega_{\beta_2}$, which stems from the Euler-Lagrange condition \eqref{eq:EL'}.
\end{remark}

\section{Free energy and entropy maximizers}
In this section we analyze various aspects of entropy maximization related to the free energy $\sfF_\beta$. 
We specialize from now on in setting of Section \ref{sub:mainresult}, hence considering the vortex patch problem in the unit disk $M=\DD$, with entropy given by 
\eqref{eq:boltzDisc}.
Since our analysis is based on
rearrangements inequality, we first take some time to review the basic concepts of radial rearrangements. The assumptions on $f$ are those of Theorem \ref{thm:main}

\subsection{Rearrangements and radial symmetry of maximizers}
A standard technique for studying maximizers is utilizing rearrangements of mass. 
Given a set $B\subset \DD$, its symmetric rearrangement $B^\sharp$ is the open
centered ball whose volume agrees with $B$, namely
\begin{equation}
B^\sharp=\left\{x\in\RR^2: \pi|x|^2<|B|\right\}.
\end{equation}
Given a function $\omega\in \cO$, its \emph{symmetric decreasing rearrangement}
is defined by
\begin{equation}
\omega^\sharp(x)=\int_0^1 \ind_{[\omega>t]^\sharp}(x)\dd t.
\end{equation}
Notice that $\omega^\sharp$ is radial. The \emph{symmetric increasing rearrangement} of $\omega$ is 
\begin{equation}
\omega_\sharp(x)=\omega^\sharp\left(\sqrt{1-|x|^2}\right).
\end{equation}
It follows directly from a theorem of Talenti \cite{Talenti76}*{Theorem 1(v)} that 
\begin{equation}\label{eq:energytalent}
\sfE(\omega)=\frac12\| \nabla\Delta^{-1}\omega\|^2_{L^2} \leq \frac12\| \nabla \Delta^{-1}\omega^\sharp\|^2_{L^2}   =\sfE(\omega^\sharp),
\end{equation}
for any $\omega\in \cO$,  with equality if and only if $\omega=\omega^\sharp$, see \cites{CL92, Kesavan06}.  We are also interested in comparing the kinetic energy among \emph{radial} vorticities.
If $\omega$ is radial, then the corresponding streamfunction can be explicitly derived from \eqref{eq:stream} as
\begin{equation}\label{eq:psiradial}
\psi(r)=-\int_{r}^1\frac{1}{s}\int_0^s\omega(\bar{s})  \bar{s}\,\dd\bar{s}\,\dd s
\end{equation}
For any radial function $g\in \cO$, define
\begin{equation}\label{eq:massrad}
M_g(r)=2\pi \int_0^r g(s)s\,\dd s = \int_{B_r} g(x)\dd x, \qquad r\in (0,1],
\end{equation}
where $B_r$ is the ball of radius $r$ centered at the origin. Thanks to \eqref{eq:psiradial}, for any radial $\omega\in \cO$, it is not hard to see that
\begin{equation}\label{eq:energyM}
\sfE(\omega)
=\pi \int_0^1 |\de_r \psi(r)|^2 r\dd r
=\frac{1}{4\pi} \int_0^1 \frac1r \left| M_\omega(r)\right|^2 \dd r.
\end{equation}
Moreover, we have the following comparison principle in the radial case.
\begin{lemma}
Let $\omega\in \cO$ be a radial function. Then
\begin{equation}
\sfE(\omega_\sharp)\leq\sfE(\omega)\leq\sfE(\omega^\sharp).
\end{equation}
\end{lemma}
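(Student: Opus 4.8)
The plan is to reduce the whole statement to a pointwise comparison of the radial mass functions $M_g$ from \eqref{eq:massrad} and then invoke the radial energy identity \eqref{eq:energyM}. First I would observe that both rearrangements are radial: $\omega^\sharp$ by construction, and $\omega_\sharp$ because $\omega_\sharp(x)=\omega^\sharp(\sqrt{1-|x|^2})$ depends only on $|x|$. Hence \eqref{eq:energyM} applies verbatim to all three functions $\omega_\sharp$, $\omega$, $\omega^\sharp$. Since $0\le\omega\le 1$ forces every mass function to be nonnegative and $t\mapsto t^2$ is increasing on $[0,\infty)$, the desired chain of energy inequalities will follow immediately from the pointwise bound
$$
M_{\omega_\sharp}(r)\le M_\omega(r)\le M_{\omega^\sharp}(r),\qquad r\in(0,1],
$$
by squaring and integrating against the positive weight $\tfrac{1}{4\pi r}$.

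For the upper bound I would invoke the Hardy–Littlewood inequality. Since $B_r$ has measure $\pi r^2$ and the superlevel sets of $\omega^\sharp$ are centered balls, one has $M_\omega(r)=\int_{B_r}\omega\,\dd x\le \sup\{\int_E\omega\,\dd x:\ |E|=\pi r^2\}=\int_{B_r}\omega^\sharp\,\dd x=M_{\omega^\sharp}(r)$. (Alternatively, the right-hand inequality $\sfE(\omega)\le\sfE(\omega^\sharp)$ is already contained in Talenti's estimate \eqref{eq:energytalent}, valid for every $\omega\in\cO$, so this half is essentially free.)

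The lower bound is the step requiring a short computation, and I expect it to be the main point. The increasing rearrangement is obtained from $\omega^\sharp$ via the measure-preserving involution $|x|\mapsto\sqrt{1-|x|^2}$ of $\DD$; performing the substitution $\sigma=\sqrt{1-s^2}$, with $s\,\dd s=-\sigma\,\dd\sigma$, in the definition of $M_{\omega_\sharp}$ yields
$$
M_{\omega_\sharp}(r)=2\pi\int_0^r\omega^\sharp(\sqrt{1-s^2})\,s\,\dd s=2\pi\int_{\sqrt{1-r^2}}^1\omega^\sharp(\sigma)\,\sigma\,\dd\sigma=M_\omega(1)-M_{\omega^\sharp}\big(\sqrt{1-r^2}\big),
$$
using $M_{\omega^\sharp}(1)=M_\omega(1)$. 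On the other hand, integrating the smallest values of $\omega$ over a set of measure $\pi r^2$ gives $\inf\{\int_E\omega\,\dd x:\ |E|=\pi r^2\}=M_\omega(1)-M_{\omega^\sharp}(\sqrt{1-r^2})$, since the complementary set has measure $\pi(1-r^2)=\pi(\sqrt{1-r^2})^2$. Comparing the two expressions shows that $M_{\omega_\sharp}(r)$ equals this infimum, which is $\le\int_{B_r}\omega\,\dd x=M_\omega(r)$ because $\omega$ is radial and $B_r$ is an admissible competitor of the correct measure. This establishes the lower bound and completes the argument.
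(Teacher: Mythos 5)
Your proof is correct and follows essentially the same route as the paper: reduce to the pointwise comparison $M_{\omega_\sharp}\leq M_\omega\leq M_{\omega^\sharp}$ of the radial mass functions and then conclude via the radial energy identity \eqref{eq:energyM}. The only difference is cosmetic: where the paper quotes the two-sided Hardy--Littlewood inequality (with both $\omega^\sharp$ and $\omega_\sharp$) tested against $\ind_{B_r}$, you derive the increasing-rearrangement half by hand through the substitution $\sigma=\sqrt{1-s^2}$ and the bathtub principle, which amounts to proving the same inequality the paper cites.
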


\begin{proof}
Thanks to the Hardy-Littlewood inequality\footnote{For any two measurable functions $f,g:\DD\to [0,\infty)$, there 
hold $$\int_{\DD}f^\sharp g_\sharp \leq \int_{\DD} fg \leq \int_{\DD}f^\sharp g^\sharp$$} and the fact that $(\ind_{B_r})^\sharp = \ind_{B_r}$,
we have
\begin{equation}
\int_{\DD} \ind_{B_r}(x) \omega_\sharp(x)\dd x\leq \int_{\DD} \ind_{B_r}(x) \omega(x)\dd x\leq \int_{\DD} \ind_{B_r}(x) \omega^\sharp(x)\dd x,
\end{equation}
implying that $M_{\omega_\sharp}\leq M_\omega\leq M_{\omega^\sharp}$. The claim follows from \eqref{eq:energyM}.
\end{proof}

The representation \eqref{eq:energyM} of $\sfE$ is also useful to compute explicitly the energy for specific vorticities. For instance,
\begin{equation}\label{eq:constantvort}
\omega_\star\equiv \sfm \qquad \Rightarrow \qquad \sfe_\star:=\sfE(\omega_\star) = \frac{\pi\sfm^2}{16}.
\end{equation}
Now, for any radial $\omega\in\cO$, we have that $M_\omega(r)\leq\pi\min\{r^2,\sfm\}$. Defining
\begin{equation}
\omega_{max}(r)=\begin{cases}
1,\quad r\in\left(0,\sqrt{\sfm}\right),\\
0,\quad r\in\left(\sqrt{\sfm},1\right),
\end{cases}
\end{equation}  
we then have that $M_\omega\leq M_{\omega_{max}}$. Similarly, 
\begin{equation}
M_\omega(r)=M_\omega(1)-2\pi \int_r^1 \omega(s)s\,\dd s\geq  \pi \left(\sfm-1+r^2\right),
\end{equation}  
so that $M_\omega\geq M_{\omega_{min}}$, where
\begin{equation}\label{eq:omegamin}
	\omega_{min}(r)=\begin{cases}
	0,\quad r\in\left(0,\sqrt{1-\sfm}\right),\\
	1,\quad r\in\left(\sqrt{1-\sfm},1\right).
	\end{cases}
\end{equation}  
As a consequence, by a direct computation of $\sfE(\omega_{min})$ and $\sfE(\omega_{max})$ we have the following result, which we state
without proof.

\begin{lemma}\label{lem:energymaxmin}
For any radial $\omega\in \cO$ we have
\begin{equation}
\sfe_{min}\leq\sfE(\omega)\leq\sfe_{max},
\end{equation}
where
        \begin{equation}
        \sfe_{min}:=\sfE(\omega_{min})=\frac{\pi\sfm^2}{16} -\frac{\pi}{8}(1-\sfm)\left(\sfm+(1-\sfm)\log(1-\sfm)\right)
        \end{equation}
and
        \begin{equation}
        \sfe_{max}:=\sfE(\omega_{max})=\frac{\pi\sfm^2}{16} +\frac{\pi \sfm^2}{8}|\log\sfm|.
        \end{equation}
\end{lemma}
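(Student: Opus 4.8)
The plan is to combine the explicit energy representation \eqref{eq:energyM} with the mass comparison bounds established in the paragraph immediately preceding the statement. Since every radial $\omega\in\cO$ is nonnegative, its mass profile $M_\omega(r)=\int_{B_r}\omega$ is nonnegative, and the functional
\begin{equation}
\sfE(\omega)=\frac{1}{4\pi}\int_0^1\frac1r\left|M_\omega(r)\right|^2\dd r
\end{equation}
is pointwise monotone in $M_\omega$: if $0\le M_{g_1}(r)\le M_{g_2}(r)$ for a.e.\ $r$, then $\sfE(g_1)\le\sfE(g_2)$, because the kernel $\tfrac{1}{4\pi r}$ is strictly positive. The bounds $M_{\omega_{min}}\le M_\omega\le M_{\omega_{max}}$ derived just above therefore yield $\sfE(\omega_{min})\le\sfE(\omega)\le\sfE(\omega_{max})$ at once, which is precisely the desired sandwiching.

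It then remains to evaluate the two extremal energies. First I would record the explicit mass profiles obtained by integrating the defining indicators, namely $M_{\omega_{max}}(r)=\pi\min\{r^2,\sfm\}$ and $M_{\omega_{min}}(r)=\pi\max\{0,\,r^2-(1-\sfm)\}$, the latter supported on $r>\sqrt{1-\sfm}$. For $\sfe_{max}$ I would split the integral at $r=\sqrt{\sfm}$ and compute
\begin{equation}
\sfE(\omega_{max})=\frac{\pi}{4}\left(\int_0^{\sqrt{\sfm}}r^3\dd r+\sfm^2\int_{\sqrt{\sfm}}^1\frac{\dd r}{r}\right)=\frac{\pi}{4}\left(\frac{\sfm^2}{4}-\frac{\sfm^2}{2}\log\sfm\right),
\end{equation}
which equals $\tfrac{\pi\sfm^2}{16}+\tfrac{\pi\sfm^2}{8}|\log\sfm|$ since $\sfm\in(0,1)$ forces $-\log\sfm=|\log\sfm|$.

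For $\sfe_{min}$, setting $a=1-\sfm$ and expanding $(r^2-a)^2/r=r^3-2ar+a^2/r$, I would integrate over $[\sqrt{a},1]$ via the antiderivative $\tfrac{r^4}{4}-ar^2+a^2\log r$, evaluate at the two endpoints, and simplify. Re-expressing the outcome in the variable $\sfm$ recovers the stated formula
\begin{equation}
\sfe_{min}=\frac{\pi\sfm^2}{16}-\frac{\pi}{8}(1-\sfm)\bigl(\sfm+(1-\sfm)\log(1-\sfm)\bigr).
\end{equation}
There is no genuine obstacle in this argument; the only point requiring a little care is the bookkeeping in the $\omega_{min}$ computation, where the nonzero lower limit $\sqrt{1-\sfm}$ produces the logarithmic term $a^2\log a$ that, after the substitution $a=1-\sfm$, becomes the $(1-\sfm)^2\log(1-\sfm)$ contribution in the final expression.
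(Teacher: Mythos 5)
Your proposal is correct and follows essentially the same route as the paper: the mass comparison $M_{\omega_{min}}\le M_\omega\le M_{\omega_{max}}$ derived immediately before the lemma, combined with the monotonicity of the radial energy representation \eqref{eq:energyM} in $M_\omega$, gives the sandwiching, and the two extremal values are then obtained by the same direct computation (which the paper omits, stating the lemma without proof). Your explicit evaluations of $\sfE(\omega_{max})$ and $\sfE(\omega_{min})$ check out and reproduce the stated formulas.
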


In fact, the above functions satisfy the stronger property below.
\begin{lemma}\label{lem:singletons}
The functions $\omega_{min}$ and $\omega_{max}$ are the unique functions that achieve their energy levels, that is
\begin{align}
\label{eq:singletons1}\sfE(\omega)=\sfe_{min} \quad &\Rightarrow \quad \omega=\omega_{min},\\
\label{eq:singletons2}\sfE(\omega)=\sfe_{max} \quad &\Rightarrow \quad \omega=\omega_{max}.
\end{align}
\end{lemma}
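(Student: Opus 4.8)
The plan is to treat the two extremal energy levels separately, since maximizing and minimizing the (convex) energy functional call for genuinely different tools. For the maximal level \eqref{eq:singletons2} I would run a two-step rearrangement argument. First, Talenti's inequality \eqref{eq:energytalent} gives $\sfE(\omega)\le\sfE(\omega^\sharp)$ with equality precisely when $\omega=\omega^\sharp$, which already forces any energy maximizer to be radially decreasing. Second, among radial profiles I compare mass functions: every radial $g\in\cO$ satisfies $M_g(r)\le\pi\min\{r^2,\sfm\}=M_{\omega_{max}}(r)$, so the representation \eqref{eq:energyM} yields $\sfE(\omega^\sharp)\le\sfE(\omega_{max})=\sfe_{max}$ with equality iff $M_{\omega^\sharp}\equiv M_{\omega_{max}}$, i.e. $\omega^\sharp=\omega_{max}$. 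Chaining $\sfe_{max}=\sfE(\omega)\le\sfE(\omega^\sharp)\le\sfe_{max}$ then forces both inequalities to be equalities, whence $\omega=\omega^\sharp=\omega_{max}$.

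The minimal level \eqref{eq:singletons1} is the interesting one, and here the main obstacle is that Talenti-type symmetrization only controls the \emph{maximum}: there is no elementary reason the energy minimizer should be radial, so I cannot simply reduce to comparing mass functions as above. Instead I would exploit that $\sfE(\omega)=\tfrac12\|\nabla\Delta^{-1}\omega\|_{L^2}^2$ is a strictly convex quadratic form. Expanding around $\omega_{min}$ and integrating by parts (using $\psi_{min}:=\Delta^{-1}\omega_{min}$, which vanishes on $\partial\DD$) gives the exact identity
\[
\sfE(\omega)=\sfe_{min}-\int_\DD\psi_{min}(\omega-\omega_{min})+\sfE(\omega-\omega_{min}),
\]
valid for every $\omega\in\cO$. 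The quadratic remainder $\sfE(\omega-\omega_{min})$ is nonnegative and vanishes only when $\omega=\omega_{min}$, so the whole matter reduces to showing that the linear term is nonnegative, that is, that $\omega_{min}$ maximizes $\omega\mapsto\int_\DD\psi_{min}\,\omega$ over $\cO$.

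To close this last point I would use the explicit monotonicity of $\psi_{min}$ together with a bathtub argument. From \eqref{eq:psiradial} one has $\partial_r\psi_{min}(r)=M_{\omega_{min}}(r)/(2\pi r)$ with $M_{\omega_{min}}(r)=\max\{0,\pi(r^2-(1-\sfm))\}$, so $\psi_{min}$ is constant, with value $c_0$ say, on $B_{\sqrt{1-\sfm}}$ and strictly increasing outside it. Writing $\int_\DD\psi_{min}\,\omega=c_0\pi\sfm+\int_\DD(\psi_{min}-c_0)\,\omega$ and using $\psi_{min}-c_0\ge 0$ (with equality exactly on $B_{\sqrt{1-\sfm}}$) together with $\omega\le 1$, I obtain $\int_\DD(\psi_{min}-c_0)\,\omega\le\int_\DD(\psi_{min}-c_0)\,\ind_{\{|x|>\sqrt{1-\sfm}\}}=\int_\DD(\psi_{min}-c_0)\,\omega_{min}$, which is precisely nonnegativity of the linear term.

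Combining the two pieces, the identity above shows $\sfE(\omega)-\sfe_{min}$ is a sum of two nonnegative terms, the quadratic one being strictly positive unless $\omega=\omega_{min}$; hence $\sfE(\omega)>\sfe_{min}$ for every $\omega\ne\omega_{min}$, which proves \eqref{eq:singletons1} (and simultaneously confirms that the globally defined $\sfe_{min}$ equals $\sfE(\omega_{min})$). The one point needing care is the passage from ``linear term nonnegative'' to genuine uniqueness: rather than pushing the bathtub comparison to a sharp equality analysis, I would lean on the strict positivity of the quadratic remainder $\sfE(\omega-\omega_{min})$, which is what encodes the strict convexity of $\sfE$ and delivers uniqueness cleanly.
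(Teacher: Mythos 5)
Your proof is correct. For the maximal level \eqref{eq:singletons2} your argument coincides with the paper's: Talenti's inequality \eqref{eq:energytalent} forces any maximizer to be radially decreasing, and the pointwise comparison $M_{\omega^\sharp}\le M_{\omega_{max}}$ combined with the representation \eqref{eq:energyM} pins down $\omega_{max}$, with equality tracing back through the chain exactly as you describe. For the minimal level \eqref{eq:singletons1}, however, you take a genuinely different route. The paper's argument is shorter and softer: $\sfE$ is a positive definite quadratic form, hence strictly convex, so it has a unique minimizer over the convex set $\cO$; since both $\sfE$ and $\cO$ are invariant under rotations, this unique minimizer must itself be rotation invariant, hence radial, and Lemma \ref{lem:energymaxmin} then identifies it as $\omega_{min}$. (So, contrary to your remark that ``there is no elementary reason the energy minimizer should be radial,'' uniqueness plus symmetry is precisely that elementary reason.) Your argument instead expands the quadratic form around $\omega_{min}$ and verifies first-order optimality by a bathtub comparison, using that $\psi_{min}$ is constant on $B_{\sqrt{1-\sfm}}$ and strictly increasing outside; this is correct as written, and the sign and mass bookkeeping check out. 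What your route buys is a quantitative remainder: the identity $\sfE(\omega)-\sfe_{min}=-\int_\DD\psi_{min}(\omega-\omega_{min})\,\dd x+\sfE(\omega-\omega_{min})$ together with nonnegativity of the linear term yields the stability estimate $\sfE(\omega)-\sfe_{min}\ge\sfE(\omega-\omega_{min})$, i.e. control of the deficit in the $H^{-1}$ norm, which the paper's soft convexity argument does not provide. What the paper's route buys is brevity: no explicit stream function computation and no Euler--Lagrange-type verification are needed.
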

\begin{proof}
Since $\omega\mapsto\sfE(\omega)$ is a convex function, it has a unique global minimizer in $\cO$. Moreover, such minimizer is radially symmetric,
since $\sfE$ is invariant under rotation. Thus, Lemma \ref{lem:energymaxmin} implies that $\omega_{min}$ is the unique minimizer. Turning to \eqref{eq:singletons2},
we know by \eqref{eq:energytalent} that any energy maximizer is necessarily radially decreasing, and by Lemma \ref{lem:energymaxmin} that $\omega_{max}$ is one of them.
If $\bar\omega$ is another radially decreasing maximizer, it is not hard to see that $\bar\omega=\omega_{max}$ if and only if $M_{\bar\omega}=M_{\omega_{max}}$, i.e. if and only if 
$\sfE(\bar\omega)= \sfE(\omega_{max})$.
\end{proof}

\subsection{Relaxed maximization problems}
The strategy to prove Theorem \ref{thm:concavity} is to study (relaxed versions of) the maximization problem \eqref{pb:maxpb} and  
apply a min-max principle. In turn, we will see how this is reduced to study uniqueness of maximizers for the free energy $\sfF_\beta$ in
\eqref{eq:freeenergy}, as $\beta\in \RR$ varies.
We begin with the following observations.

\begin{lemma}\label{lem:existmax}
For every $\sfe\in[\sfe_{min},\sfe_{max}]$, the constrained maximization problem \eqref{pb:maxpb} admits at least one solution.
\end{lemma}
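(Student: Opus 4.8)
The plan is to run the direct method of the calculus of variations, so the argument naturally splits into three parts: the constraint set is nonempty, a maximizing sequence admits a weak-$*$ limit that still satisfies the energy constraint, and the entropy does not decrease under this limit.

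First I would verify that for every $\sfe\in[\sfe_{min},\sfe_{max}]$ the feasible set $\{\omega\in\cO:\sfE(\omega)=\sfe\}$ is nonempty. By Lemmas \ref{lem:energymaxmin} and \ref{lem:singletons}, $\sfe_{min}$ and $\sfe_{max}$ are precisely the global minimum and maximum of $\sfE$ over $\cO$, attained at $\omega_{min}$ and $\omega_{max}$. Since $\cO$ is convex, the segment $\omega_t=(1-t)\omega_{min}+t\,\omega_{max}$ lies in $\cO$ for $t\in[0,1]$, and because $\nabla^\perp\Delta^{-1}$ is linear, $t\mapsto\sfE(\omega_t)$ is a quadratic polynomial in $t$, hence continuous, with endpoint values $\sfe_{min}$ and $\sfe_{max}$. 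The intermediate value theorem then produces, for each $\sfe$ in the interval, a feasible competitor, so the constraint set is nonempty. Together with the elementary bound $0\le -z\log z\le 1/\e$ on $[0,1]$, which shows $\sfS$ is bounded on $\cO$, this guarantees that the supremum in \eqref{pb:maxpb} is a finite real number and that maximizing sequences exist.

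Next I would take a maximizing sequence $\{\omega_n\}\subset\cO$ with $\sfE(\omega_n)=\sfe$ and $\sfS(\omega_n)\to\sup$. Since $\cO$ is weak-$*$ compact, I extract a subsequence with $\omega_n\xrightharpoonup{*}\omega_\infty\in\cO$. Because $\sfE$ is weak-$*$ continuous, the constraint passes to the limit exactly, $\sfE(\omega_\infty)=\lim_n\sfE(\omega_n)=\sfe$, so $\omega_\infty$ is feasible; this is the point at which weak-$*$ \emph{continuity} (not merely semicontinuity) of $\sfE$ is essential, as it means no relaxation of the energy constraint is required. For the objective, I invoke the weak-$*$ upper semicontinuity of $\sfS$, which holds because the integrand $z\mapsto -z\log z$ is concave — this is exactly the semicontinuity property already recorded in the discussion following \eqref{eq:conserved}. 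Consequently $\sfS(\omega_\infty)\ge\limsup_n\sfS(\omega_n)=\sup$, so $\omega_\infty$ attains the supremum and solves \eqref{pb:maxpb}.

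The only genuinely delicate ingredient is the weak-$*$ upper semicontinuity of the entropy, which rests on concavity of the integrand; the remaining facts (weak-$*$ compactness of $\cO$ and weak-$*$ continuity of $\sfE$) are standard and were already used in the introduction. I therefore expect no substantive obstacle, and the main care is simply in recording that the constraint is preserved in the limit precisely because $\sfE$ is weak-$*$ continuous.
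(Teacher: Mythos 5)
Your proof is correct and follows essentially the same route as the paper's: the direct method using weak-$*$ compactness of $\cO$, weak-$*$ continuity of $\sfE$ to pass the constraint to the limit, and weak-$*$ upper semicontinuity of the concave entropy to conclude the limit is a maximizer. The only difference is that you additionally verify nonemptiness of the feasible set via the intermediate value theorem along the segment joining $\omega_{min}$ and $\omega_{max}$, a point the paper leaves implicit.
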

\begin{proof}
Notice that  $\sfS_f$ is strictly concave and upper semicontinuous. Let $\omega_n$ be a maximizing sequence. Since $\cO$ is weak-$*$ compact, there exists $\bar\omega\in\cO$
such that $\omega_n\xrightharpoonup{*} \bar\omega$ up to subsequences,
and since $\sfE(\omega_n)\to \sfE(\bar\omega)$ by continuity, then $\sfE(\bar\omega)=\sfe$. Moreover, 
\begin{equation}
\sup \left\{\sfS_f(\omega): \omega\in \cO,\ \sfE(\omega) =\sfe  \right\}=\limsup_{n\to\infty}\sfS_f(\omega_n)\leq \sfS_f(\bar\omega),
\end{equation}
as $\sfS_f$ is upper semicontinuous.
Hence, $\bar\omega$ is a maximizer.  
\end{proof}

It is also important to identify the global entropy maximizer, regardless of the energy constraint. 

\begin{lemma}\label{lem:unconmax}
The unconstrained maximization problem 
\begin{equation}
\text{maximize } \sfS_f(\omega)\text{ subject to } \omega\in \cO,
\end{equation}  
has a unique maximizer given by the constant solution $\omega_\star\equiv \sfm$.
\end{lemma}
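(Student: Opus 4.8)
The plan is to maximize the concave functional $\sfS_f$ over the convex, weak-$*$ compact set $\cO$ defined in \eqref{eq:Ochar}, and to show the maximizer is the constant $\omega_\star\equiv\sfm$. Existence follows from the upper semicontinuity and weak-$*$ compactness already used in Lemma~\ref{lem:existmax} (applied without the energy constraint), while uniqueness follows from the \emph{strict} concavity of $\sfS_f$, which is inherited from the strict concavity of $f$ together with the fact that $\cO$ is convex: if two distinct maximizers existed, their midpoint would lie in $\cO$ and strictly increase $\sfS_f$, a contradiction. So the only real task is to \emph{identify} the maximizer as the constant $\sfm$.

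First I would observe that $\omega_\star\equiv\sfm$ indeed lies in $\cO$, since $0\le \sfm\le 1$ and its average is exactly $\sfm$. To show it is optimal, I would invoke Jensen's inequality directly. Writing $\sfS_f(\omega)=\frac{1}{|\DD|}\int_\DD f(\omega(x))\,\dd x$ and viewing normalized Lebesgue measure $\frac{1}{|\DD|}\dd x$ as a probability measure on $\DD$, Jensen's inequality for the concave function $f$ gives
\begin{equation}
\sfS_f(\omega)=\frac{1}{|\DD|}\int_\DD f(\omega(x))\,\dd x\le f\!\left(\frac{1}{|\DD|}\int_\DD \omega(x)\,\dd x\right)=f(\sfm)=\sfS_f(\omega_\star),
\end{equation}
for every $\omega\in\cO$, where the last equality uses that $\omega_\star$ is constant. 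This shows $\omega_\star$ attains the maximum. Strict concavity of $f$ makes Jensen's inequality strict unless $\omega$ is a.e.\ equal to its average, i.e.\ unless $\omega\equiv\sfm$, which already gives uniqueness without a separate convexity argument.

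The point I would take care with is the equality case of Jensen's inequality: the inequality is strict precisely when $f$ is strictly concave \emph{and} $\omega$ is not a.e.\ constant, and one must confirm that the range of values of any $\omega\in\cO$ lies in $[0,1]\subset[0,\infty)$ where the hypotheses on $f$ apply, so that $f(\omega(x))$ is well-defined and the integral makes sense. Since $0\le\omega\le 1$ pointwise and $f\in C([0,\infty))$, there is no integrability obstruction and no boundary issue. Thus I expect no serious obstacle here; the mild subtlety is simply recording that strict concavity upgrades Jensen to a strict inequality off the constant, which is exactly what pins down uniqueness and forces the maximizer to be $\omega_\star\equiv\sfm$.
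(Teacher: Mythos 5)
Your proof is correct, but it takes a genuinely different route from the paper. The paper's argument is structural: existence follows from weak-$*$ compactness and upper semicontinuity (as in Lemma~\ref{lem:existmax}, without the energy constraint), uniqueness from strict concavity of $\sfS_f$ over the convex set $\cO$, and then the maximizer is \emph{identified} by a symmetry argument --- for any volume-preserving diffeomorphism $\Phi$, the function $\bar\omega\circ\Phi$ lies in $\cO$ with the same entropy, so uniqueness forces $\bar\omega\circ\Phi=\bar\omega$ for all such $\Phi$, hence $\bar\omega$ is constant. You instead identify the maximizer directly via Jensen's inequality,
\begin{equation}
\sfS_f(\omega)=\frac{1}{|\DD|}\int_\DD f(\omega(x))\,\dd x\le f\!\left(\frac{1}{|\DD|}\int_\DD \omega(x)\,\dd x\right)=f(\sfm),
\end{equation}
with the equality case of the strictly concave $f$ pinning down $\omega\equiv\sfm$ a.e. Your route is more elementary and self-contained: it makes the compactness-based existence step and the separate uniqueness step redundant, since Jensen simultaneously exhibits the maximum value and characterizes when it is attained; you correctly note the only point of care (equality case of Jensen, and that values of $\omega$ lie in $[0,1]$ where $f$ is defined). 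The paper's symmetry argument, by contrast, is more abstract --- it never uses the integral form of $\sfS_f$ beyond its invariance under measure-preserving maps, so it would survive for more general rearrangement-invariant strictly concave functionals --- but it does rely on the (unproved, though standard) fact that a function invariant under all area-preserving diffeomorphisms of $\DD$ must be a.e.\ constant, a step your approach avoids entirely.
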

\begin{proof}
The proof follows from the previous Lemma \ref{lem:existmax}, with the uniqueness stemming from the strict concavity of the entropy functional.
Assume now that $\bar\omega$ is the unique maximizer, and let $\Phi:\DD\to\DD$ be a volume-preserving diffeomorphism. Then $\bar\omega\circ \Phi \in \cO$,
and $\sfS_f(\bar\omega\circ \Phi)=\sfS_f(\bar\omega)$. Thus, by uniqueness, $\bar\omega\circ \Phi =\bar\omega$, and since $\Phi$ is arbitrary, this implies $\bar\omega$ is constant,
hence equal to its average.
\end{proof}
Thanks to the computation in \eqref{eq:constantvort}, Theorem \ref{thm:concavity}\ref{item:maximum} is proved. The role of $\sfe_\star=\sfE(\omega_\star)$ in
 \eqref{eq:constantvort} is quite interesting. Indeed, for a fixed energy level $\sfe\leq \sfe_\star$, the maximization problem \eqref{pb:maxpb} can be relaxed into
 a convex one.
 
 \begin{lemma}\label{lem:1relax}
    If   $\sfe_{min}\le \sfe\le \sfe_\star$, then the constrained maximization problem \eqref{pb:maxpb} is equivalent to the relaxed problem
\begin{equation}\label{pb:maxpbrelLEQ}
\text{maximize } \sfS_f(\omega)\text{ subject to } \omega\in \cO,\ \sfE(\omega) \leq\sfe.
\end{equation} 
In particular, problem \eqref{pb:maxpbrelLEQ}, and hence \eqref{pb:maxpb}, have a unique solution.

If $ \sfe_\star\le \sfe\le \sfe_{max}$,
then the constrained maximization problem \eqref{pb:maxpb} is equivalent to the relaxed problem
\begin{equation}\label{pb:maxpbrelGEQ}
\text{maximize } \sfS_f(\omega)\text{ subject to } \omega\in \cO,\ \sfE(\omega) \geq\sfe.
\end{equation} 
\end{lemma}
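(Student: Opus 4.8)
The plan is to use the global entropy maximizer $\omega_\star\equiv\sfm$ from Lemma~\ref{lem:unconmax} as a pivot, connecting any competitor $\bar\omega$ to it by the linear interpolation $\omega_t=(1-t)\bar\omega+t\omega_\star$, $t\in[0,1]$, which stays in the convex set $\cO$. Two monotonicity facts along this path drive everything. First, since $\sfE$ is a strictly convex quadratic form (as already used in Lemma~\ref{lem:singletons}), $t\mapsto\sfE(\omega_t)$ is convex and interpolates between $\sfE(\bar\omega)$ and $\sfE(\omega_\star)=\sfe_\star$. Second, since $\sfS_f$ is concave and $\omega_\star$ is its \emph{global} maximizer over $\cO$, the concave function $t\mapsto\sfS_f(\omega_t)$ attains its maximum at $t=1$, hence is nondecreasing on $[0,1]$; strict concavity forces $\sfS_f(\omega_\star)>\sfS_f(\bar\omega)$ whenever $\bar\omega\neq\omega_\star$, so in fact $\sfS_f(\omega_t)>\sfS_f(\bar\omega)$ for every $t\in(0,1]$.

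For the case $\sfe_{min}\le\sfe\le\sfe_\star$, I would first note that \eqref{pb:maxpbrelLEQ} admits a maximizer, because its feasible set $\{\omega\in\cO:\sfE(\omega)\le\sfe\}$ is weak-$*$ closed (by weak-$*$ continuity of $\sfE$) hence weak-$*$ compact, and $\sfS_f$ is upper semicontinuous. The crux is to show the energy constraint is active at any maximizer $\bar\omega$. If instead $\sfE(\bar\omega)<\sfe\le\sfe_\star=\sfE(\omega_\star)$, then in particular $\bar\omega\neq\omega_\star$, and along $\omega_t$ the function $\sfE(\omega_t)$ runs from a value below $\sfe$ up to $\sfe_\star\ge\sfe$, so by continuity there is a first $t^\ast\in(0,1]$ with $\sfE(\omega_{t^\ast})=\sfe$; this $\omega_{t^\ast}$ is feasible for \eqref{pb:maxpbrelLEQ} yet $\sfS_f(\omega_{t^\ast})>\sfS_f(\bar\omega)$, contradicting optimality. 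Thus every maximizer of \eqref{pb:maxpbrelLEQ} satisfies $\sfE=\sfe$, so it is feasible for \eqref{pb:maxpb}; conversely any maximizer of \eqref{pb:maxpb} is feasible for the relaxed problem with the same value, giving equality of the two optimal values and of their optimizer sets. Uniqueness is then immediate, since \eqref{pb:maxpbrelLEQ} maximizes the strictly concave $\sfS_f$ over the \emph{convex} set $\{\sfE\le\sfe\}$, and \eqref{pb:maxpb} inherits this unique solution.

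The case $\sfe_\star\le\sfe\le\sfe_{max}$ is symmetric: existence for \eqref{pb:maxpbrelGEQ} again follows from weak-$*$ compactness of $\{\sfE\ge\sfe\}$ and upper semicontinuity of $\sfS_f$. If a maximizer $\bar\omega$ had $\sfE(\bar\omega)>\sfe\ge\sfe_\star=\sfE(\omega_\star)$, then $\bar\omega\neq\omega_\star$ and $\sfE(\omega_t)$ decreases from above $\sfe$ down to $\sfe_\star\le\sfe$, so there is a smallest $t^\ast\in(0,1]$ with $\sfE(\omega_{t^\ast})=\sfe$; the monotonicity of the entropy along the path yields the feasible competitor $\omega_{t^\ast}$ with $\sfS_f(\omega_{t^\ast})>\sfS_f(\bar\omega)$, again a contradiction, so the constraint is active and \eqref{pb:maxpbrelGEQ} is equivalent to \eqref{pb:maxpb}. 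Here no uniqueness is claimed, and none is available through this argument since $\{\sfE\ge\sfe\}$ is non-convex. The main obstacle throughout is precisely establishing that the inequality constraint is active, i.e.\ ruling out interior maximizers; this is where the role of $\omega_\star$ as the unconstrained maximizer and the convexity of $\sfE$ must be combined, and it is also the reason the threshold is exactly $\sfe_\star=\sfE(\omega_\star)$.
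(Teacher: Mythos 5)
Your proof is correct, and it shares the paper's central device---the convex interpolation $\omega_t=(1-t)\bar\omega+t\omega_\star$ toward the unconstrained entropy maximizer $\omega_\star\equiv\sfm$ of Lemma~\ref{lem:unconmax}, used to show that the energy constraint must be active at any maximizer of the relaxed problem---but the mechanism by which you reach the contradiction is genuinely different. The paper argues locally: if $\sfE(\bar\omega)<\sfe$, then $\omega_t$ remains feasible for small $t$ by continuity of $\sfE$, so the first variation $\left.\frac{\dd}{\dd t}\sfS_f(\omega_t)\right|_{t=0}$ must be $\leq 0$; it then computes this derivative as $\frac{1}{|\DD|}\int_\DD(\omega_\star-\bar\omega)\left(f'(\bar\omega)-f'(\omega_\star)\right)$, which is strictly positive by strict monotonicity of $f'$, a contradiction. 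You instead argue globally: since the concave function $t\mapsto\sfS_f(\omega_t)$ attains its maximum at $t=1$, it is nondecreasing and in fact satisfies $\sfS_f(\omega_t)>\sfS_f(\bar\omega)$ for all $t\in(0,1]$ once $\bar\omega\neq\omega_\star$, while the intermediate value theorem applied to the continuous map $t\mapsto\sfE(\omega_t)$ (running from below $\sfe$ up to $\sfe_\star\geq\sfe$) produces a feasible competitor $\omega_{t^*}$ with strictly larger entropy. What your route buys is robustness: it uses only strict concavity and upper semicontinuity of $\sfS_f$, never the differentiability of $f$, so it sidesteps the (minor, but real) point that in the paper's computation $f'(\bar\omega)$ is infinite on the set $\{\bar\omega=0\}$ and the derivative must be interpreted in the extended sense via monotone difference quotients. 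What the paper's route buys is brevity and consistency with the first-variation/Euler--Lagrange machinery (Lemma~\ref{lem:EL1}) used throughout the rest of the article. Both arguments treat the case $\sfe_\star\leq\sfe\leq\sfe_{max}$ symmetrically, and both correctly note that uniqueness follows only in the convex case $\{\sfE\leq\sfe\}$; your explicit remarks on existence of relaxed maximizers (weak-$*$ compactness of the feasible sets plus upper semicontinuity) and on why active constraints yield equality of the two optimizer sets spell out steps the paper leaves implicit.
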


\begin{proof}
We first look at the case $\sfe\leq\sfe_\star$ and argue by contradiction. Fix any maximizer $\bar\omega\in\cO$ with $\sfE(\bar\omega)< \sfe$. For any $t\in[0,1]$, the convex combination 
 $\omega_t=(1-t)\bar\omega+t\omega_\star$ is in $\cO$. Moreover, using the continuity of $\sfE$, we have that for $t$ small enough
 $\sfE(\omega_t)< \sfe$.
Since $\bar\omega$ is a maximizer,
\begin{equation}
    \left.\frac{\dd}{\dd t}\sfS_f(\omega_t)\right|_{t=0}\leq 0.
\end{equation}
However, a direct computation shows
\begin{equation}
    \left.\frac{\dd}{\dd t}\sfS_f(\omega_t)\right|_{t=0}=\frac{1}{|\DD|}\int_\DD (\omega_\star-\bar\omega)f'(\bar\omega) 
    =\frac{1}{|\DD|}\int_\DD (\omega_\star-\bar\omega)(f'(\bar\omega) -f'(\omega_\star))>0,
\end{equation}
hence reaching a contradiction. Since the condition $ \sfE(\omega) \leq\sfe$ in \eqref{pb:maxpbrelLEQ} is convex and the entropy functional is strictly concave, uniqueness of solution follows immediately.
The case $\sfe\geq\sfe_\star$ is completely analogous, however \eqref{pb:maxpbrelGEQ} is not a convex problem and therefore uniqueness of solutions is not as immediate
as the previous case.
\end{proof}

\section{Uniqueness of maximizers at negative temperature}
In light of Proposition \ref{prop:conditionalconcavity}, we have reduced the problem to the investigation of uniqueness of maximizers for the free energy $\sfF_\beta$.
This is the main result of this section.
\begin{theorem}\label{thm:beta<0}
For any $\beta\in\RR$, there exists a unique maximizer $\omega_\beta$ of $\sfF_\beta$ over $\cO$. 
\end{theorem}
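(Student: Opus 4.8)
The plan is to prove existence and uniqueness separately, with uniqueness being the substantial part. Existence is immediate: since $\sfF_\beta = \sfS - \beta\sfE$ is upper semicontinuous (as $\sfS$ is upper semicontinuous and $\sfE$ is weak-$*$ continuous) on the weak-$*$ compact set $\cO$, a maximizer exists by the direct method, exactly as in Lemma~\ref{lem:existmax}. The case $\beta = 0$ is settled by Lemma~\ref{lem:unconmax}, giving the constant maximizer $\omega_\star \equiv \sfm$. So the core task is uniqueness for $\beta \neq 0$, and by the symmetry $\sfE(\omega) = \sfE(\omega^\sharp)$ versus $\sfE(\omega_\sharp)$, the two signs of $\beta$ are mirror images: for $\beta > 0$ one wants to minimize energy, pushing toward radially \emph{increasing} profiles, while $\beta < 0$ favors radially \emph{decreasing} ones. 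I would treat $\beta < 0$ as the main case and deduce $\beta > 0$ by an analogous argument (replacing $\omega^\sharp$ by $\omega_\sharp$).

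First I would reduce to one-dimensional radially monotone profiles. For $\beta < 0$, since $-\beta\sfE$ is maximized by $\sfE(\omega^\sharp) \geq \sfE(\omega)$ and $\sfS(\omega^\sharp) = \sfS(\omega)$ (rearrangement preserves the distribution, hence the entropy), any maximizer must satisfy $\omega = \omega^\sharp$, i.e.\ be radially decreasing. This is the step that exploits the strict equality case in \eqref{eq:energytalent}. Thus it suffices to prove uniqueness within the class of radially decreasing functions in $\cO$, which I encode via the mass profile $M_\omega(r)$ from \eqref{eq:massrad}, writing the energy as $\sfE(\omega) = \frac{1}{4\pi}\int_0^1 r^{-1}|M_\omega(r)|^2\,\dd r$ through \eqref{eq:energyM}. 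The Euler--Lagrange equation from Lemma~\ref{lem:EL1}, namely $\frac{1}{|\DD|}f'(\bar\omega) + \beta\bar\psi = \bar\lambda$ on $\{\bar\omega < 1\}$ together with $\inf\bar\omega > 0$, characterizes the maximizer.

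The heart of the argument, following McCann and the Keller--Segel uniqueness strategy \cites{McCann,MR2929020,CCH21,CCV15}, is to take the optimal-transport interpolation (the monotone rearrangement map in one dimension) between a fixed maximizer $\omega_\beta$ and an arbitrary competitor $\bar\omega$, both radially decreasing, and show the free energy is \emph{strictly} convex or strictly monotone along this displacement path unless the two coincide. I would show the entropy $\sfS$ is displacement convex and the energy term $-\beta\sfE$ (with $\beta<0$, so $+|\beta|\sfE$) contributes favorably; then combining displacement convexity with Jensen's inequality and substituting the Euler--Lagrange relation for $\omega_\beta$ forces any competitor achieving the same free energy to equal $\omega_\beta$. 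The main obstacle, and the principal departure from \cites{CCH21,CCV15}, is the hard pointwise constraint $0 \leq \omega \leq 1$ built into $\cO$: the upper bound $\omega \leq 1$ means the maximizer can saturate on a set $\{\omega = 1\}$ where the Euler--Lagrange equation only holds as an inequality, so the interpolation and the convexity computation must be carried out carefully on the free region $\{\omega < 1\}$ while controlling the contribution of the saturated region. Managing this $L^\infty$ constraint compatibly with the displacement interpolation is where I expect the real work to lie.
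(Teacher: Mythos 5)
Your plan for $\beta<0$ coincides with the paper's strategy (Talenti reduction to radially decreasing profiles, Euler--Lagrange equation, Brenier map between maximizer and competitor, Jensen's inequality combined with the Euler--Lagrange relation), but as written it has two genuine gaps. First, the claim that the two signs of $\beta$ are ``mirror images'' is misleading and causes you to miss the one-line argument for $\beta\ge0$: there, $\sfF_\beta=\sfS-\beta\sfE$ is strictly concave on the convex set $\cO$ (since $\sfS$ is strictly concave and $\sfE$, being a nonnegative quadratic form, is convex), so uniqueness is immediate, and radial symmetry follows from uniqueness plus rotation invariance --- this is all the paper does for $\beta\geq 0$. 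Your mirror route instead requires the inequality $\sfE(\omega_\sharp)\le\sfE(\omega)$, with equality only when $\omega=\omega_\sharp$, for \emph{non-radial} $\omega$; this is not Talenti's inequality reflected (the paper's comparison lemma proves it only for radial functions, and \eqref{eq:energytalent} only covers the decreasing direction). The non-radial statement can in fact be proved by duality --- write $\sfE(\omega)=\sup_{v\in H^1_0,\,v\ge0}\bigl(\int v\omega-\tfrac12\|\nabla v\|_{L^2}^2\bigr)$, test with the radially decreasing potential of $\omega_\sharp$, and invoke the lower Hardy--Littlewood inequality --- but you would have to supply this proof, and even then you would have to redo the entire optimal-transport argument in the radially increasing setting, all to obtain what concavity gives for free.

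Second, and more seriously, for $\beta<0$ you correctly identify the saturated set $\{\omega=1\}$, where the Euler--Lagrange equation degenerates to an inequality, as ``where the real work lies,'' but you propose no mechanism to handle it --- and that mechanism is precisely the crux of the paper's proof. The paper's device is Lemma~\ref{lem:minradius}: among all maximizers one can select $\omega_\beta$ whose saturation radius $r_\beta$ is \emph{minimal}, via weak-$*$ compactness of the set of maximizers together with the differentiated Euler--Lagrange equation (which forces $\omega_\beta$ to be strictly decreasing, hence strictly below $1$, on $(r_\beta,1)$). Uniqueness is then proved by comparing $\omega_\beta$ not with an arbitrary competitor but with another \emph{maximizer} $\bar\omega$: since $r_\beta\le r_{\bar\omega}$, the competitor saturates on all of $[0,r_\beta)$, so the Brenier map is the identity there ($\phi\equiv1$), while on $(r_\beta,1)$ the integrated Euler--Lagrange identity \eqref{eq:appliedEL} collapses the comparison of Lemma~\ref{lem:Breniermap} into the single defect term $-2\,\omega_\beta(1)\int_0^1\log\phi(r)\,r\,\dd r$, which Jensen's inequality shows is nonnegative and strictly positive unless $T=\mathrm{id}$ (Lemma~\ref{lem:last}). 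Without this minimal-radius selection, the interpolation between two maximizers whose saturation sets are not nested does not close, so your outline is missing the key idea rather than merely deferring routine work.
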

For $\beta\geq 0$,  $\sfF_\beta$ is strictly concave and therefore admits a unique maximizer, which is necessarily radial due to the invariance of the functionals under rotations. Hence, this section is devoted to the study of the case of negative inverse temperature $\beta<0$.

\subsection{The Euler-Lagrange equations}

Despite the loss of concavity of $\sfF_\beta$ for $\beta<0$, 
Talenti's inequality \eqref{eq:energytalent} implies that if $\omega$ is not radially symmetric and decreasing then
\begin{equation}\label{eq:rearr}
\sfF_\beta(\omega)=\sfS(\omega)-\beta \sfE(\omega)=\sfS(\omega^\sharp)-\beta \sfE(\omega)<\sfS(\omega^\sharp)-\beta \sfE(\omega^\sharp)=\sfF_\beta(\omega^\sharp).
\end{equation}
Thus, any maximizer is radially symmetric and decreasing. First, we derive the Euler-Lagrange equations, which we refine from Lemma~\ref{lem:EL1}.

\begin{lemma}\label{lem:EL}
Let $\beta<0$. Any maximizer $\bar\omega$ of $\sfF_\beta$ over $\cO$ is radially decreasing and satisfies the following:
There exist  $r_{\bar\omega}\in[0,1)$ and $\bar\lambda=\bar\lambda(\bar\omega)\in[0,\infty)$ such that
\begin{align}
\bar\omega(r)&=1, \qquad \text{on}\quad [0,r_{\bar\omega}),\label{eq:EL1}\\
-\log\bar\omega(r) -\frac{\beta}{2} \int_r^1\frac{1}{s}M_{\bar\omega}(s) \dd s &=\bar\lambda, \qquad \text{on}\quad (r_{\bar\omega},1). \label{eq:EL2}
\end{align}
As a consequence, $\bar\omega(r)\ge \e^{-\bar{\lambda}} >0$ on $[0,1]$. 
\end{lemma}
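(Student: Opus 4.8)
The plan is to combine the rearrangement reduction already recorded in \eqref{eq:rearr} with a refinement of the first-order analysis of Lemma~\ref{lem:EL1}, specialized to the Boltzmann entropy $f(z)=-z\log z$ on $\DD$ (so that $|M|=\pi$ and $f'(z)=-\log z-1$). First I would invoke \eqref{eq:rearr}: since $\beta<0$, Talenti's inequality together with the rearrangement-invariance of $\sfS$ forces any maximizer $\bar\omega$ to coincide with its symmetric decreasing rearrangement, so $\bar\omega=\bar\omega(r)$ is radial and nonincreasing. Monotonicity then dictates the level-set structure: setting $r_{\bar\omega}:=\inf\{r:\bar\omega(r)<1\}$, we get $\bar\omega\equiv 1$ on $[0,r_{\bar\omega})$, which is \eqref{eq:EL1}; moreover $r_{\bar\omega}\le\sqrt{\sfm}<1$, since $\sfm\pi=\int_\DD\bar\omega\ge \pi r_{\bar\omega}^2$ precludes $\bar\omega\equiv1$ on a ball of radius exceeding $\sqrt{\sfm}$.

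Next I would derive \eqref{eq:EL2} on $(r_{\bar\omega},1)$. Running the two competing perturbations of Lemma~\ref{lem:EL1}, the mass-adding variation $\bar\omega_\eps=\tfrac{\sfm}{\sfm+\eps}(\bar\omega+\eps(1-\bar\omega)\varphi)$ yields $\tfrac1\pi f'(\bar\omega)+\beta\bar\psi\ge\bar\lambda$ a.e.\ on $\{\bar\omega<1\}$, while the mass-removing variation gives the reverse inequality. The delicate point, distinguishing this from Lemma~\ref{lem:EL1}, is that here $f'(0^+)=+\infty$, so I cannot quote its lower-bound step; instead I would justify the removal variation locally, using test functions $\varphi$ supported in a region $\{\bar\omega\ge c\}$ with $c>0$, where $\bar\omega-\eps\varphi\ge0$ for small $\eps$ by monotonicity. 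This produces the equality $\tfrac1\pi f'(\bar\omega)+\beta\bar\psi=\bar\lambda$ a.e.\ on $\{0<\bar\omega<1\}$ for a finite multiplier $\bar\lambda$. Inserting the radial streamfunction identity \eqref{eq:psiradial}, which in the notation of \eqref{eq:massrad} reads $\bar\psi(r)=-\tfrac1{2\pi}\int_r^1 s^{-1}M_{\bar\omega}(s)\,\dd s$, multiplying by $\pi$, and absorbing the additive and multiplicative constants into $\bar\lambda$, this is exactly \eqref{eq:EL2}.

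Finally I would read off the lower bound directly from \eqref{eq:EL2}. Solving for $\bar\omega$ gives $\log\bar\omega(r)=-\bar\lambda-\tfrac{\beta}{2}\int_r^1 s^{-1}M_{\bar\omega}(s)\,\dd s$; since $\beta<0$ and $M_{\bar\omega}\ge0$, the last term is nonnegative, whence $\bar\omega(r)\ge\e^{-\bar\lambda}$ wherever \eqref{eq:EL2} holds. In particular $\bar\omega$ cannot vanish on a set of positive measure—excluded independently because, by concavity and $f'(0^+)=+\infty$, adding a little mass on $\{\bar\omega=0\}$ would raise the entropy at an infinite rate against a bounded change of energy, contradicting maximality—so $\{0<\bar\omega<1\}=(r_{\bar\omega},1)$ and the bound $\bar\omega\ge\e^{-\bar\lambda}$ propagates to all of $[0,1]$, the value being $1$ on $[0,r_{\bar\omega})$. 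The sign $\bar\lambda\ge0$ then follows by letting $r\to1^-$ in \eqref{eq:EL2}: the integral term vanishes and $\bar\lambda=-\log\bar\omega(1^-)\ge0$ because $\bar\omega\le1$.

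I expect the main obstacle to be precisely this reversal of the role played by the slope condition at the origin. In Lemma~\ref{lem:EL1} the hypothesis $f'(0^+)=-\infty$ forces positivity of the maximizer before the Euler--Lagrange equation is even written, whereas for the Boltzmann entropy $f'(0^+)=+\infty$ offers no such a priori barrier; the uniform lower bound must instead be extracted \emph{a posteriori} from the equation itself, crucially exploiting $\beta<0$ together with $M_{\bar\omega}\ge0$. Making the local removal variation rigorous, keeping $\bar\omega_\eps$ within the constraints $0\le\bar\omega_\eps\le1$ and of the correct mass, is the routine but careful part of the argument.
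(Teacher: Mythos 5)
Your proposal is correct and follows the same route as the paper's (very compressed) proof: \eqref{eq:rearr} forces any maximizer to be radially decreasing, monotonicity gives \eqref{eq:EL1}, the first-variation argument of Lemma~\ref{lem:EL1} specialized to $f(z)=-z\log z$ combined with the radial formula \eqref{eq:psiradial} gives \eqref{eq:EL2}, and the signs $\beta<0$, $M_{\bar\omega}\ge 0$ yield the lower bound. Where you genuinely depart from the paper is in refusing to quote Lemma~\ref{lem:EL1} verbatim, and this caution is well founded: the stated hypothesis $\lim_{z\to 0^+}f'(z)=-\infty$ is not satisfiable by any concave $f$ (concavity makes $f'$ nonincreasing, so $f'(0^+)=\sup f'$), and the Boltzmann entropy has $f'(0^+)=+\infty$. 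This is a sign slip in the paper: redoing the first variation, the mass-adding perturbation actually yields $\frac{1}{|M|}f'(\bar\omega)+\beta\bar\psi\le\bar\lambda$ a.e.\ on $\{\bar\omega<1\}$ (not $\ge$, as written both there and in your sketch), and with that orientation $f'(0^+)=+\infty$ immediately forces $\inf\bar\omega>0$, which is the intended argument. Your alternative repair --- deriving the equation only on $\{0<\bar\omega<1\}$ via localized removal variations and then excluding a positive-measure vanishing set a posteriori, using that adding mass where $\bar\omega=0$ gains entropy at rate $\eps\log(1/\eps)$ against $O(\eps)$ energy changes --- is valid and buys the same conclusion without touching the statement of Lemma~\ref{lem:EL1}; your observation that $\bar\lambda\ge 0$ (both terms in \eqref{eq:EL2} are nonnegative, or let $r\to 1^-$) also fills a detail the paper leaves implicit. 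One small caution: the rescaled removal perturbation $\frac{\sfm}{\sfm-\eps}(\bar\omega-\eps\varphi)$ exceeds $1$ on $\{\bar\omega=1\}$ whenever $\varphi$ vanishes there, so when $r_{\bar\omega}>0$ you should instead use mass-neutral two-sided perturbations supported in annuli $\{c\le\bar\omega\le 1-\delta\}$; this is a minor fix, and the same issue is already present in the paper's proof of Lemma~\ref{lem:EL1}.
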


\begin{proof}
By \eqref{eq:rearr} we know that any maximizer must be radially symmetric and decreasing. Hence, the first property that there 
exists a $r_{\bar\omega}\in[0,1)$, such that the set $\{\bar\omega=1\}=B_{r_{\bar\omega}}$ follows immediately.

To prove \eqref{eq:EL2}, we simply rely on Lemma \ref{lem:EL1}. In the case of Boltzmann entropy, $f'(z)=-1-\log(z)$. Moreover, we can make use of the expression \eqref{eq:psiradial} for radial streamfunctions with \eqref{eq:massrad} and conclude the proof.
\end{proof}

We use the non-degeneracy that stems from \eqref{eq:EL2}, to show that out of the set of possible maximizers $\bar\omega$, there exists at least one maximizer with the smallest possible radius $r_{\bar\omega}$.

\begin{lemma}\label{lem:minradius}
Let $\beta<0$. There exist a maximizer $\omega_\beta$ of  $\sfF_\beta$ over $\cO$ such that
\begin{enumerate}[label=(\alph*), ref=(\alph*)]
\item $\{\omega_\beta=1\}=[0,r_\beta)$; \label{item a}
\item For $\lambda_\beta=\bar{\lambda}(\omega_\beta)$ in \eqref{eq:EL2}, there holds
\begin{equation}\label{eq:ELomegabeta}
-\log\omega_\beta (r) -\frac\beta2 \int_r^1\frac{1}{s}M_{\omega_\beta}(s) \dd s =\lambda_\beta, \qquad \text{on } (r_{\beta
},1);
\end{equation}

\item $r_\beta\leq r_{\bar\omega}$ for any maximizer $\bar\omega$. \label{item c}
\end{enumerate}
\end{lemma}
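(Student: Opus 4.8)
The properties \ref{item a} and the Euler--Lagrange relation \eqref{eq:ELomegabeta} hold for \emph{every} maximizer of $\sfF_\beta$ by Lemma~\ref{lem:EL}, so the entire content of the statement is the minimality \ref{item c}: among all maximizers one must produce one whose plateau radius is smallest. The plan is to set
\[
r_\beta := \inf\left\{ r_{\bar\omega} : \bar\omega \text{ is a maximizer of } \sfF_\beta \text{ over } \cO \right\},
\]
pick a sequence of maximizers $\omega_n$ with $r_{\omega_n} \to r_\beta$, pass to a limit $\omega_\beta$, and show that $\omega_\beta$ is itself a maximizer with $r_{\omega_\beta} = r_\beta$.

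For the limit, since every $\omega_n$ is radially decreasing with $0 \le \omega_n \le 1$, weak-$*$ compactness of $\cO$ (or Helly selection on the radial profiles) gives, up to a subsequence, a radially decreasing $\omega_\beta \in \cO$ with $\omega_n \xrightharpoonup{*} \omega_\beta$; testing against $\ind_{B_r} \in L^1(\DD)$ then yields the pointwise convergence $M_{\omega_n}(r) \to M_{\omega_\beta}(r)$ for every $r \in (0,1]$. Exactly as in Lemma~\ref{lem:existmax}, upper semicontinuity of $\sfS$ together with weak-$*$ continuity of $\sfE$ gives $\sfF_\beta(\omega_\beta) \ge \limsup_n \sfF_\beta(\omega_n) = \max_{\cO}\sfF_\beta$, so $\omega_\beta$ is a maximizer. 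Consequently \ref{item a} and \eqref{eq:ELomegabeta} hold for $\omega_\beta$ by Lemma~\ref{lem:EL}, and $r_{\omega_\beta} \ge r_\beta$ is automatic from the definition of $r_\beta$ as an infimum over maximizers.

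The crux is the reverse inequality $r_{\omega_\beta} \le r_\beta$, i.e. that no new plateau is created in the limit, and this is where I expect the main obstacle to lie: weak-$*$ (or pointwise) convergence controls neither level sets nor the flat radius, so one must rule out $\{\omega_\beta = 1\}$ being strictly larger than the plateaus of the approximants. The mechanism I would use is a uniform quantitative gap between $\omega_n$ and $1$ away from the origin. Evaluating \eqref{eq:EL2} as $r \downarrow r_{\omega_n}$ (where $\omega_n \to 1$) identifies the multiplier $\lambda_n = \tfrac{|\beta|}{2}\int_{r_{\omega_n}}^1 \tfrac1s M_{\omega_n}(s)\,\dd s$, and the elementary bound $M_{\omega_n}(s) \le \pi s^2$ forces $\lambda_n \le \tfrac{\pi|\beta|}{4}$ uniformly in $n$. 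Hence $\omega_n \ge \e^{-\lambda_n} \ge \e^{-\pi|\beta|/4} =: c_0 > 0$, so $M_{\omega_n}(s) \ge \pi c_0 s^2$. Feeding this lower bound back into the integrated Euler--Lagrange relation
\[
\omega_n(r) = \exp\left(-\frac{|\beta|}{2}\int_{r_{\omega_n}}^r \frac1s M_{\omega_n}(s)\,\dd s\right), \qquad r \in (r_{\omega_n},1),
\]
yields, for every $\delta>0$ and all large $n$, a uniform bound $\omega_n(s) \le 1 - \mu(\delta) < 1$ on $\{s \ge r_\beta + \delta\}$ with $\mu(\delta)>0$. Therefore $\pi r^2 - M_{\omega_n}(r) = 2\pi\int_{r_{\omega_n}}^r (1-\omega_n(s))\, s\,\dd s$ is bounded below by a positive constant independent of $n$ for each fixed $r > r_\beta$; letting $n\to\infty$ gives $M_{\omega_\beta}(r) < \pi r^2$, so $\omega_\beta \ne 1$ on $B_r$, whence $r_{\omega_\beta} \le r$. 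As $r > r_\beta$ is arbitrary this gives $r_{\omega_\beta} \le r_\beta$, and combined with the opposite inequality we conclude $r_{\omega_\beta} = r_\beta$, which is the radius in \ref{item a}.

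I would emphasize that the scale-invariant estimate $M_{\omega_n}(s) \le \pi s^2$ is what makes the argument robust: it keeps $\lambda_n$ — and hence the uniform floor $c_0$ — bounded even in the degenerate case $r_\beta = 0$, where the naive lower bound $M_{\omega_n}(s) \ge \pi r_{\omega_n}^2$ collapses. It is precisely this uniform positivity that upgrades the qualitative fact ``$\omega_n < 1$ off its plateau'' to the $n$-uniform quantitative gap needed to pass the absence of a plateau to the weak-$*$ limit.
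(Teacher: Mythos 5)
Your skeleton (define $r_\beta$ as the infimum of plateau radii over maximizers, take a minimizing sequence, extract a weak-$*$ limit, show it is again a maximizer, then exclude a larger plateau in the limit) coincides with the paper's, and your final limiting step through the cumulative masses $M_{\omega_n}$ is fine. The gap is the step everything else rests on: the identification $\lambda_n=\tfrac{|\beta|}{2}\int_{r_{\omega_n}}^1\tfrac1s M_{\omega_n}(s)\,\dd s$, justified by the parenthetical claim that $\omega_n(r)\to 1$ as $r\downarrow r_{\omega_n}$. This is not available from the results you invoke: Lemma~\ref{lem:EL} gives $\omega_n=1$ on $[0,r_{\omega_n})$ and the Euler--Lagrange identity only on the open interval $(r_{\omega_n},1)$, while the one-sided inequalities in the proof of Lemma~\ref{lem:EL1} yield at the plateau edge only $\omega_n(r_{\omega_n}^+)\le 1$, so a jump (or an empty plateau with $\sup\omega_n<1$) is not excluded. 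In fact your bound $\lambda_n\le\pi|\beta|/4$ is provably false for a whole range of $\beta$: any maximizer is radially decreasing with mean $\sfm$, so its boundary trace satisfies $\omega_n(1^-)\le\sfm$, and sending $r\to1^-$ in \eqref{eq:EL2} gives $\lambda_n=-\log\omega_n(1^-)\ge\log(1/\sfm)$. Hence for $0<|\beta|<\tfrac4\pi\log(1/\sfm)$ no maximizer satisfies $\lambda_n\le\pi|\beta|/4$, and the uniform floor $c_0=\e^{-\pi|\beta|/4}$ fails too (it would force $\sfm\ge\e^{-\pi|\beta|/4}$). What breaks is exactly the parenthetical: $\lambda_n$ carries the extra term $-\log\omega_n(r_{\omega_n}^+)>0$ that your identity drops. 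So the estimate you highlight as making the argument robust in the degenerate case $r_\beta=0$ is precisely the one that fails there.

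The argument is repairable without edge continuity. First reduce to $r_{\omega_n}>0$ for all $n$ (if some maximizer has an empty plateau, it itself witnesses the lemma). Then bound $\lambda_n$ through the mass constraint instead of identifying it: \eqref{eq:EL2} and $M_{\omega_n}(s)\le\pi s^2$ give $\omega_n\le\e^{-\lambda_n}\e^{\pi|\beta|/4}$ on $(r_{\omega_n},1)$, so $\sfm\le r_{\omega_n}^2+\e^{-\lambda_n}\e^{\pi|\beta|/4}(1-r_{\omega_n}^2)$, i.e.
\begin{equation}
\e^{-\lambda_n}\ge \e^{-\pi|\beta|/4}\,\frac{\sfm-r_{\omega_n}^2}{1-r_{\omega_n}^2},
\end{equation}
and this floor is uniform along the minimizing sequence because every maximizer is strictly positive (Lemma~\ref{lem:EL}), hence $r_{\bar\omega}^2<\sfm$ and so $r_\beta^2<\sfm$. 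Your feedback step then goes through, keeping the harmless factor $\omega_n(r_{\omega_n}^+)\le1$ in the integrated Euler--Lagrange relation. Note that the paper avoids multipliers altogether at this point: it observes that the radially decreasing profiles are uniformly of bounded variation, upgrades the convergence to strong $L^p$ with distributional convergence of the derivatives, and passes to the limit in the differentiated Euler--Lagrange equation $\de_r\omega_n=\tfrac\beta2\omega_n M_{\omega_n}(r)/r$, concluding $\de_r\omega_\beta<0$, hence $\omega_\beta<1$, on $(r_\beta,1)$. That route sidesteps entirely the uniform-positivity issue on which your proof founders.
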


\begin{proof}
By the upper-semicontinuity of the entropy and the continuity of the energy, it follows that the set of maximizers
$$
\mathcal{M}=\left\{\bar\omega\in\cO\;:\; \sfF_\beta(\bar\omega)=\max_{\omega\in \cO}\sfF_\beta(\omega)\right\}
$$
is weak-$*$ compact. As \ref{item c} suggests, we want $r_\beta$ to be defined by
$$
r_\beta=\inf_{\bar\omega\in \mathcal{M} } r_{\bar\omega}.
$$
We consider $\{\bar\omega_n\}_{n\in\NN}\subset \mathcal{M}$ an minimizing sequence such that
$$
\lim_{n\to\infty} r_{\bar\omega_n}=r_\beta.
$$
Using compactness, we know that, up to subsequence, there exists an accumulation point $\bar\omega_\beta\in\mathcal{M}$ such that $\bar\omega_n\xrightharpoonup{*} \omega_\beta$. The fact that $\omega_\beta$ satisfies the Euler-Lagrange condition \eqref{eq:ELomegabeta} follows from the maximality, so that to complete the proof we just need to show \ref{item a}, namely
$$
\{\omega_\beta=1\}=[0,r_\beta).
$$
We notice that $\{\bar\omega_n\}_{n\in\NN}$ is a sequence of radially decreasing and bounded functions, hence they are  
of bounded variation, which implies the convergences is strong in $L^p(\DD)$ for any $p\in[1,\infty)$. Using that $r_\beta\le r_{\bar\omega_n}$, 
we know that $\bar\omega_n=1$ on $r\in[0,r_\beta)$, which implies the inclusion $[0,r_\beta)\subset \{\omega_\beta=1\}$, for instance by passing to a further subsequence that converges pointwise. We will show the 
reverse inclusion by using the Euler-Lagrange equation for $\bar\omega_n$. Indeed, differentiating in $r$ the Euler-Lagrange condition \eqref{eq:EL2} for $\bar\omega_n$, we get the equation
$$
-\frac{\de_r\bar\omega_n(r)}{\bar\omega_n(r)}+\frac\beta2 \frac{1}{r}M_{\omega_{\beta}}(r)=0,\qquad\text{for } r>r_n.
$$
Using the strong convergence of $\bar\omega_n\to\omega_\beta$,  and the distributional convergence of the derivatives $\de_r\bar\omega_n\rightharpoonup\de_r\omega_\beta$, we obtain that 
$$
\de_r\omega_\beta(r)=\frac\beta2\omega_\beta(r) \frac{1}{r}M_{\omega_{\beta}}(r)<0, \qquad\text{for } r>r_\beta,
$$
which implies that 
$$
\omega_\beta<1  \qquad\mbox{for $r>r_\beta$},
$$ 
and the proof is finished.
\end{proof}
Written for the corresponding radial streamfunction $\psi_\beta$, the Euler-Lagrange equation \eqref{eq:ELomegabeta} reads
\begin{equation}\label{eq:ELpsibeta}
\Delta\psi_\beta = \e^{-\lambda_\beta} \e^{\pi\beta \psi_\beta}.
\end{equation}

\subsection{Uniqueness of maximizers}
We will show that $\omega_\beta$ is in fact unique maximizer of the free energy $\sfF_\beta$. We start by considering $\bar{\omega}$ a general radial competitor. 
Below we write an expression for $\sfF_\beta(\bar{\omega})$ in terms of $\omega_\beta$ and the Brenier \cite{Brenier91} (or optimal transport map) 
between them. As both functions are radial (and decreasing), we know that the optimal mapping is also radial and increasing. Just like the one dimensional case it can be 
represented implicitly by the cumulative distribution functions. To avoid some  pathological regularity situations, from now on we assume that both the source and target measure are bounded above and below, which is satisfied by maximizers, see Lemma~\ref{eq:ELomegabeta}. Namely, there exists a unique  strictly increasing map $T:[0,1]\to [0,1]$ such that $T(0)=0$, $T(1)=1$ and
\begin{equation}\label{eq:breniemap}
\int_0^{r}\omega_\beta(s)s\,\dd s=\int_0^{T(r)}\bar\omega(s)s\,\dd s,\qquad \text{for any } r\in[0,1].
\end{equation}
Using the Monge-Ampere equation associated to the change of variable we obtain the relationship
\begin{equation}\label{eq:MA}
\frac{r}{T(r)T'(r)}\omega_\beta(r)=\bar\omega(T(r)),\qquad \text{for any } r\in[0,1].
\end{equation}
To simplify the notation we define the function
\begin{equation}\label{eq:phi}
\phi(r):=\frac{T(r)T'(r)}{r}.
\end{equation}
The next results is a comparison between the energy and entropy of $\omega_\beta$ and $\bar\omega$.
\begin{lemma}\label{lem:Breniermap}
Let $\omega_\beta$ and $\bar{\omega}$ be two radial functions in $\cO$ that are bounded below away from zero. Then
\begin{equation}\label{eq:BrenierH}
\sfS(\bar{\omega})-2\int_0^1  \omega_\beta(r) \log \phi(r)  r\dd r=\sfS(\omega_\beta)
\end{equation}
\begin{equation}\label{eq:BrenierE}
\sfE(\bar{\omega})+\int_0^1  \left(\int_r^1\frac{1}{s} \omega_\beta(s) M_{\omega_\beta}(s)\dd s
\right)\log \phi(r)r\dd r\le \sfE(\omega_\beta),
\end{equation}
where $\phi(r)$ is defined in \eqref{eq:phi}.
\end{lemma}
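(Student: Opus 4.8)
The plan is to handle the two claims by different mechanisms: \eqref{eq:BrenierH} is an exact change-of-variables identity coming from the Monge--Amp\`ere relation, while \eqref{eq:BrenierE} will follow, after an integration by parts, from a single scalar Jensen inequality. Throughout I abbreviate $M:=M_{\omega_\beta}$ and use that, since $\omega_\beta$ and $\bar\omega$ are bounded above and below away from zero, the map $T$ is $C^1$ and strictly increasing with $T(0)=0$, $T(1)=1$, while \eqref{eq:MA}--\eqref{eq:phi} give $\phi=\omega_\beta/(\bar\omega\circ T)$ bounded above and below; in particular every integrand below is integrable and the boundary terms at $r=0,1$ vanish.

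For the entropy identity I would write $\sfS(\bar\omega)=-2\int_0^1\bar\omega(\rho)\log\bar\omega(\rho)\,\rho\,\dd\rho$ and substitute $\rho=T(r)$. Since $\rho\,\dd\rho=T(r)T'(r)\,\dd r=\phi(r)\,r\,\dd r$ and $\bar\omega(T(r))=\omega_\beta(r)/\phi(r)$ by \eqref{eq:MA}, this becomes
\begin{equation}
\sfS(\bar\omega)=-2\int_0^1\frac{\omega_\beta(r)}{\phi(r)}\log\!\Bigl(\frac{\omega_\beta(r)}{\phi(r)}\Bigr)\phi(r)\,r\,\dd r=-2\int_0^1\omega_\beta(r)\log\omega_\beta(r)\,r\,\dd r+2\int_0^1\omega_\beta(r)\log\phi(r)\,r\,\dd r.
\end{equation}
The first term is exactly $\sfS(\omega_\beta)$, and rearranging gives \eqref{eq:BrenierH}; note this is an identity, with no inequality incurred.

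For the energy I would start from \eqref{eq:energyM} and substitute $\rho=T(r)$, using $M_{\bar\omega}(T(r))=M(r)$ (which is \eqref{eq:breniemap} differentiated/scaled), to get $4\pi\,\sfE(\bar\omega)=\int_0^1\frac{T'(r)}{T(r)}M(r)^2\,\dd r$. Hence
\begin{equation}
4\pi\bigl(\sfE(\omega_\beta)-\sfE(\bar\omega)\bigr)=\int_0^1\Bigl(\frac1r-\frac{T'(r)}{T(r)}\Bigr)M(r)^2\,\dd r=\int_0^1\Bigl(\log\frac{r}{T(r)}\Bigr)'M(r)^2\,\dd r.
\end{equation}
Integrating by parts and using $(M^2)'=4\pi\,\omega_\beta M\,r$ (the boundary terms vanishing since $M(r)\asymp r^2$ and $T(r)\asymp r$ near $0$), I obtain the clean identity $\sfE(\omega_\beta)-\sfE(\bar\omega)=\int_0^1\log\frac{T(r)}{r}\,\omega_\beta(r)M(r)\,r\,\dd r$. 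Applying Fubini to the integral term of \eqref{eq:BrenierE} gives
\begin{equation}
\int_0^1\Bigl(\int_r^1\frac1s\omega_\beta(s)M(s)\,\dd s\Bigr)\log\phi(r)\,r\,\dd r=\int_0^1\frac1s\omega_\beta(s)M(s)\,P(s)\,\dd s,\qquad P(s):=\int_0^s\log\phi(r)\,r\,\dd r.
\end{equation}
Comparing the two displays and using $\omega_\beta M\ge0$, inequality \eqref{eq:BrenierE} reduces to the pointwise bound $P(s)\le s^2\log\frac{T(s)}{s}$.

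The decisive step — and the part I expect to be the real obstacle — is this scalar bound, which is exactly where Jensen enters. The key is the averaging identity $\int_0^s\phi(r)\,r\,\dd r=\int_0^s T(r)T'(r)\,\dd r=\tfrac12 T(s)^2$, so that $\tfrac{r\,\dd r}{s^2/2}$ is a probability measure on $(0,s)$ with $\phi$-average $T(s)^2/s^2$. Concavity of $\log$ then yields
\begin{equation}
\frac{2}{s^2}P(s)=\int_0^s\log\phi(r)\,\frac{r\,\dd r}{s^2/2}\le\log\Bigl(\int_0^s\phi(r)\,\frac{r\,\dd r}{s^2/2}\Bigr)=\log\frac{T(s)^2}{s^2},
\end{equation}
which is precisely $P(s)\le s^2\log\frac{T(s)}{s}$ and closes \eqref{eq:BrenierE}. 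The entropy half is routine; the energy half hinges entirely on arranging the integration by parts and Fubini rewriting so that the comparison collapses to this one inequality, and on recognizing the averaging identity that makes Jensen applicable — naively attempting the pointwise bound through a differential comparison fails, so the integrated (Jensen) form is essential.
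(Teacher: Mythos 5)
Your proof is correct and follows essentially the same route as the paper's: both rest on the change of variables $\rho=T(r)$ together with the Monge--Amp\`ere relation \eqref{eq:MA}, an integration by parts in the energy, Jensen's inequality for $\log$ against the probability measure $2r\,\dd r/s^2$ on $(0,s)$ using the identity $2\int_0^s\phi(r)\,r\,\dd r=T(s)^2$, and a Fubini rearrangement. The only difference is cosmetic ordering (you integrate by parts after the change of variables and apply Jensen pointwise to reduce to $P(s)\le s^2\log(T(s)/s)$, whereas the paper integrates by parts first and applies Jensen to $\log T(s)$ inside the integral), which yields the identical key inequality.
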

\begin{remark}
We note that in the previous result we do not use the optimality in of $\bar{\omega}$ or $\omega_\beta$ in any strong way, it only requires that both vorticities are radial.
\end{remark}
\begin{proof}
Using the change of variable $r=T(s)$ given by the Brenier map \eqref{eq:breniemap}, we can rewrite the entropy as follows
$$
\sfS(\bar{\omega})=-2\int_0^1 \bar{\omega}(r)\log(\bar{\omega}(r))r\dd r=-2\int_0^1 \bar{\omega}(T(s))\log(\bar{\omega}(T(s)))T(s) T'(s)\dd s.
$$
Using \eqref{eq:MA}, we obtain
$$
\sfS(\bar{\omega})=-2\int_0^1 \omega_\beta(s)\log\left(\frac{\omega_\beta(s)}{\phi(s)}\right)s\,\dd s,
$$
which coincides with the desired \eqref{eq:BrenierH}.

For the energy, we first integrate by parts to obtain the different representation 
$$
\sfE(\bar{\omega})=\frac{1}{4\pi}\int_0^1 \frac{|M_{\bar{\omega}}(r)|^2}{r}\dd r= -\int_0^1M_{\bar{\omega}}(r) \bar{\omega}(r) \log(r) r \dd r.
$$
Next, we perform the change of variables $r=T(s)$ and use \eqref{eq:MA} to obtain
\begin{equation}\label{eq:EwithT}
\sfE(\bar{\omega})=-\int_0^1 M_{\bar{\omega}}(T(s)) \bar{\omega}(T(s)) \log(T(s))T(s)T'(s) \dd s=-\int_0^1M_{\omega_\beta}(s) \omega_\beta(s)  \log(T(s)) s \dd s.
\end{equation}
We use that $\phi(r)=[T^2]'/2r$, to rewrite
$$
\log(T(s))=\frac{1}{2}\log(T^2(s))=\frac{1}{2}\log\left(2\int_0^s  \phi(a) a\dd a \right).
$$
Then, we normalize the integral to be able to apply Jensen's inequality and we obtain
$$
\log(T(s))=\frac{1}{2}\log(s^2) +\frac{1}{2}\log\left(\frac{2}{s^2}\int_0^s  \phi(a) a\dd a \right)\ge \log(s)+\frac{1}{s^2}\int_0^s \log(\phi(a)) a\dd a.
$$
Using this inequality on \eqref{eq:EwithT}, we have 
$$
\sfE(\bar{\omega})\le \sfE(\omega_\beta)-\int_0^1 \left(\frac{1}{s^2} \int_0^s  \log(\phi(a))a\dd a \right)M_{\omega_\beta}(s)\omega_\beta(s)s \;\dd s.
$$
The result \eqref{eq:BrenierE} follows by applying Fubini's theorem.
\end{proof}

Finally, we show that $\omega_\beta$ is the unique maximizer of $\sfF_\beta$.
\begin{lemma}\label{lem:last}
Let $\beta<0$. Let $r_\beta$ and $\omega_\beta$ be given by Lemma~\ref{lem:minradius}. Assume that $\bar\omega\in\cO$ is radial, 
bounded below away from 0, and satisfies that $(0,r_\beta)\subset \{\bar{\omega}=1\}$. Then we have the inequality
\begin{equation}\label{eq:energydiff}
\sfF_\beta(\bar{\omega})-2 \omega_\beta(1) \int_{0}^1\log\phi(r)r\dd r\le \sfF_\beta(\omega_\beta),
\end{equation}
where $\phi(r)$ is defined in \eqref{eq:phi}.
Moreover, if $\bar{\omega}\ne \omega_\beta$, then the energy difference is positive, namely
\begin{equation}\label{eq:posdef}
-2\omega_\beta(1)\int_{0}^1\log\phi(r)r\dd r> 0.
\end{equation}
\end{lemma}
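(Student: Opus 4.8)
The plan is to expand $\sfF_\beta(\bar\omega)=\sfS(\bar\omega)-\beta\sfE(\bar\omega)$ and feed in the two estimates of Lemma~\ref{lem:Breniermap}. Since $\beta<0$, multiplying the energy inequality \eqref{eq:BrenierE} by $-\beta>0$ preserves its direction, so adding it to the entropy identity \eqref{eq:BrenierH} yields
\begin{equation}
\sfF_\beta(\bar\omega)\le \sfF_\beta(\omega_\beta)+\int_0^1\left(2\omega_\beta(r)+\beta\int_r^1\frac1s\,\omega_\beta(s)M_{\omega_\beta}(s)\,\dd s\right)\log\phi(r)\,r\,\dd r .
\end{equation}
Everything then reduces to comparing the bracketed weight $H(r)$ against the right-hand side of \eqref{eq:energydiff}. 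The hypothesis that $\bar\omega$ is bounded below away from zero is exactly what makes the Brenier map, and hence $\phi$ and $\log\phi$, well behaved and integrable.

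Next I would compute $H$ explicitly, splitting $[0,1]$ at $r_\beta$. On $(r_\beta,1)$, differentiating the Euler--Lagrange relation \eqref{eq:ELomegabeta} in $r$ gives $\partial_r\omega_\beta=\tfrac{\beta}{2r}\omega_\beta M_{\omega_\beta}$, so that $\beta\int_r^1\frac1s\omega_\beta M_{\omega_\beta}\,\dd s=2\int_r^1\partial_s\omega_\beta\,\dd s=2(\omega_\beta(1)-\omega_\beta(r))$, whence $H(r)\equiv 2\omega_\beta(1)$ is \emph{constant} there. On $(0,r_\beta)$, the hypothesis $(0,r_\beta)\subset\{\bar\omega=1\}$ together with $\omega_\beta\equiv1$ on $[0,r_\beta)$ (Lemma~\ref{lem:minradius}\ref{item a}) forces the Brenier map to be the identity: \eqref{eq:breniemap} reads $r^2/2=\int_0^{T(r)}\bar\omega(s)s\,\dd s$, and since $\bar\omega\le 1$ with $\bar\omega\equiv1$ up to a radius at least $r_\beta$, the unique solution is $T(r)=r$, so that $\phi\equiv1$ and $\log\phi\equiv0$ on $(0,r_\beta)$. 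Consequently the $(0,r_\beta)$ contribution drops out and
\begin{equation}
\int_0^1 H(r)\log\phi(r)\,r\,\dd r=2\omega_\beta(1)\int_{r_\beta}^1\log\phi(r)\,r\,\dd r=2\omega_\beta(1)\int_0^1\log\phi(r)\,r\,\dd r ,
\end{equation}
which is precisely \eqref{eq:energydiff}.

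For the strict positivity \eqref{eq:posdef} I would use Jensen's inequality. Since $\phi(r)r=T(r)T'(r)=\tfrac12(T^2)'(r)$ with $T(0)=0$ and $T(1)=1$, one has $\int_0^1\phi(r)\,r\,\dd r=\tfrac12=\int_0^1 r\,\dd r$, so $2r\,\dd r$ is a probability measure under which $\phi$ has unit average. Concavity of $\log$ then gives $\int_0^1\log\phi(r)\,r\,\dd r\le 0$, and strict concavity makes the inequality strict unless $\phi\equiv1$; but $\phi\equiv1$ forces $T(r)=r$ and hence, via \eqref{eq:MA}, $\bar\omega=\omega_\beta$. Combining this with $\omega_\beta(1)\ge \e^{-\lambda_\beta}>0$ from Lemma~\ref{lem:EL}, we obtain $-2\omega_\beta(1)\int_0^1\log\phi(r)\,r\,\dd r>0$ whenever $\bar\omega\ne\omega_\beta$.

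The main obstacle is the middle step: reconciling the Euler--Lagrange identity, valid only on $(r_\beta,1)$, with the $L^\infty$-saturated patch $(0,r_\beta)$. The two facts must cooperate exactly---$H$ is constant precisely on the region where $\log\phi$ can be nonzero, while $\log\phi$ vanishes precisely on the region where the EL relation degenerates---so that no matching error survives at $r_\beta$. Forcing the Brenier map to be the identity on the patch is what achieves this, and it is here that the hypothesis $(0,r_\beta)\subset\{\bar\omega=1\}$ is used (a hypothesis automatically satisfied by any competing maximizer through Lemma~\ref{lem:minradius}\ref{item c}); some care is also needed with the regularity required to differentiate \eqref{eq:ELomegabeta} and with the equality case in Jensen's inequality.
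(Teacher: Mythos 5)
Your proposal is correct and follows essentially the same route as the paper's proof: combine the entropy identity \eqref{eq:BrenierH} and energy inequality \eqref{eq:BrenierE} using $\beta<0$, show $\phi\equiv 1$ on $(0,r_\beta)$ from the patch hypothesis, reduce the weight to the constant $2\omega_\beta(1)$ on $(r_\beta,1)$ via the differentiated and reintegrated Euler--Lagrange equation \eqref{eq:ELomegabeta}, and conclude positivity by Jensen's inequality with its equality case. Your normalization in the Jensen step (the probability measure $2r\,\dd r$ with $\int_0^1\phi\,2r\,\dd r=1$) is in fact cleaner than the paper's display \eqref{eq:Jensen}.
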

\begin{proof}
Applying Lemma~\ref{lem:Breniermap}, we have
\begin{equation}\label{eq:aux1}
\sfF_\beta(\bar{\omega})+\int_0^1\left(-2 \omega_\beta(r) -\beta\int_r^1 \frac{ 1}{s}\omega_\beta(s) M_{\omega_\beta}(s)\dd s\right)\log(\phi(r))r\dd r\le \sfF_\beta(\omega_\beta) .
\end{equation}
Notice that by the hypothesis $(0,r_\beta)\subset \{\bar{\omega}=1\}$, Brenier's map is trivial on $[0,r_\beta)$. That is to say
\begin{equation}
\phi(r)=1, \qquad \text{on } [0,r_\beta).
\end{equation}
For   $r\in(r_\beta,1)$, we can use the Euler-Lagrange equation \eqref{eq:ELomegabeta} to simplify the remainder. More specifically, taking a derivative of \eqref{eq:ELomegabeta} to obtain
$$
\de_r\omega_\beta(r)=\frac{\beta}{2}\frac{ \omega_\beta(r) M_{\omega_\beta}(r)}{r}.
$$
Integrating back on $(r,1)$, we deduce that
\begin{equation}\label{eq:appliedEL}
\omega_\beta(1)=\omega_\beta(r)+\frac\beta2\int_r^1\frac{ 1}{s}\omega_\beta(s) M_{\omega_\beta}(s)\dd s, \qquad\text{for any } r\in(r_\beta,1).
\end{equation}
Replacing back \eqref{eq:appliedEL} into \eqref{eq:aux1}, we obtain the desired \eqref{eq:energydiff}.

To show \eqref{eq:posdef}, we apply Jensen's inequality
\begin{equation}\label{eq:Jensen}
    \frac{1}{2}\int_{0}^1\log\phi(r) r\dd r\le \log\left(\frac{1}{2}\int_{0}^1\phi(r)r\dd r\right)=\log\left(\frac{1}{2}\int_{0}^1T(r) T'(r)\dd r\right)=\log(T^2(1)-T^2(0))=0.
\end{equation}
The equality in Jensen's inequality can only occur if $\phi(r)=C$ is constant, which implies that Brenier's map $T(r)=r$ 
is the identity. The conclusion that the defect is positive if $\bar{\omega}\ne\omega_\beta$ follows directly from the previous argument, and the 
fact that $\omega_\beta(1)>0$ by Lemma~\ref{lem:EL}.
\end{proof}

We conclude this section with the proof of Theorem~\ref{thm:beta<0}, which is a consequence of the results above.

\begin{proof}[Proof of Theorem~\ref{thm:beta<0}]
We will show that $\omega_\beta$ given in Lemma~\ref{lem:minradius} is the unique maximizer of $\sfF_\beta$. Assume $\bar\omega$ is also maximizer of $\sfF_\beta$, then by Lemma~\ref{lem:EL} and Lemma~\ref{lem:minradius} it satisfies the hypothesis of Lemma~\ref{lem:last}. Applying Lemma~\ref{lem:last}, and $\sfF_\beta(\bar\omega)=\sfF_\beta(\omega_\beta)$ we obtain $\omega_\beta=\bar{\omega}$.
\end{proof}

\section{Non-radial energy maximizers at fixed angular momentum}\label{sec:angmom}

The section is dedicated to the proof of Theorem \ref{thm:nonrad}. Section~\ref{sec:upperbound} contains the upper bound on the kinetic energy $\sfE$ for radial functions on $\cO_\LL$, and Section~\ref{sec:lowerbound} lower bound by computing the energy of an explicit vortex patch. The conclusion follows then by choosing $\LL$ in terms of the angular momentum. As in the statement of Theorem \ref{thm:nonrad}, we let $\sfm\in (0,1)$ and $\LL\geq 1$ and set
\begin{equation}\label{eq:OcharL2}
\cO_\LL:=\left\{\omega\in L^\infty: 0\leq \omega\leq \LL, \ \frac{1}{|\DD|}\int_\DD \omega(x)\dd x =\sfm\right\}.
\end{equation}
The proof is carried out in the next sections.
\subsection{Upper bounds on the kinetic energy for radial functions}\label{sec:upperbound}
We claim that there exists a constant $C\geq 1$, independent of $\LL,\sfa$, and $\sfm$ such that
\begin{equation}\label{eq:enerupbound}
    \sup \left\{\sfE(\omega):\omega\in\cO_\LL,\; \sfA(\omega)=\sfa,\ \omega\ \mbox{is radial}\right\}
    \le C\left(\sfm |\sfa|+|\sfa|^2\log(\LL/|\sfa|)\right).
\end{equation}
Now, if $\omega\in \cO_\LL$ is a radial function, from \eqref{eq:angmom} we deduce that
\begin{equation}
\sfA(\omega)
=-\frac12\left[2\pi\int_0^1 \omega(r) (1-r^2)r\dd r\right]
=-\frac12\left[\pi \sfm - \int_0^1 \de_r M_\omega(r) r^2\dd r\right]
=-\int_0^1 M_\omega(r) r\dd r.
\end{equation}
Thus, for every $r\in [0,1]$, we have the pointwise identity
\begin{equation}
\frac12 M_\omega(r)(1-r^2)= \frac12\int_0^r \de_s \left[M_\omega(s)(1-s^2)\right]\dd s
=\pi\int_0^r \omega(s)(1-s^2)s\dd s-\int_0^r M_\omega(s) s\dd s.
\end{equation}
In particular,
\begin{equation}
\frac12 M_\omega(r)(1-r^2)\leq \pi\int_0^r \omega(s)(1-s^2)s\dd s\leq  -\sfA(\omega),
\end{equation}
so that any radial function $\omega\in \cO_\LL$ with $\sfA(\omega)=\sfa$ satisfies
\begin{equation}
M_\omega(r)\le \min\left\{ \pi  \LL r^2,\frac{2|\sfa|}{1-r^2},\pi \sfm\right\}.
\end{equation}
This implies
\begin{equation}
M_\omega(r)\le \begin{cases}
     \pi  \LL r^2, & r^2\le 1-\sqrt{1-\frac{8|\sfa|}{\pi\LL}},\\
    \frac{2|\sfa|}{1-r^2},& 1-\sqrt{1-\frac{8|\sfa|}{\pi\LL}}<r^2\le1-\frac{2|\sfa|}{\pi\sfm},\\
    \pi\sfm, \quad& 1-\frac{2|\sfa|}{\pi\sfm}<r^2\le 1.
\end{cases}
\end{equation}
Thanks to \eqref{eq:energyM}, we then have $\sfE(\omega)\leq \sfE_1+\sfE_2+\sfE_3$, where
\begin{align}
\sfE_1
=\frac{1}{4\pi} \int_0^{r^2=1-\sqrt{1-\frac{8|\sfa|}{\pi\LL}}} \frac1r \left|\pi  \LL r^2\right|^2 \dd r 
=\frac{\pi  \LL^2}{16} \left(1-\sqrt{1-\frac{8|\sfa|}{\pi\LL}}\right)^2  \lesssim |\sfa|^2,
\end{align}
and
\begin{align}
\sfE_2
&=\frac{1}{4\pi} \int_{r^2=1-\sqrt{1-\frac{8|\sfa|}{\pi\LL}}}^{r^2=1-\frac{2|\sfa|}{\pi\sfm}} \frac1r \left| \frac{2|\sfa|}{1-r^2}\right|^2 \dd r\\
&=\frac{|\sfa|^2}{2\pi}\left[\frac{\pi\sfm}{2|\sfa|} +  \log\left( \frac{\pi\sfm}{2|\sfa|} -1\right) -\frac{1}{\sqrt{1-\frac{8|\sfa|}{\pi\LL}}} - \log\left(\frac{1-\sqrt{1-\frac{8|\sfa|}{\pi\LL}}}{\sqrt{1-\frac{8|\sfa|}{\pi\LL}}}\right)
\right]\\
&\lesssim \sfm |\sfa|+|\sfa|^2\log(\LL/|\sfa|),
\end{align}
and
\begin{align}
\sfE_3=\frac{1}{4\pi} \int_{r^2=1-\frac{2|\sfa|}{\pi\sfm}}^1 \frac1r \left| \sfm\right|^2 \dd r
=-\frac{ \pi\sfm^2 }{8}   \log\left(1-\frac{2|\sfa|}{\pi\sfm}\right)\lesssim \sfm |\sfa|.
\end{align}
Thus \eqref{eq:enerupbound} follows by collecting the above three bounds.

\subsection{Kinetic energy of a vortex patch near the boundary}\label{sec:lowerbound}
Next, we compute the energy of a vortex approximation near the boundary. We consider the vortex patch of height $\LL>0$ around $x_0\in \DD$, given by
\begin{equation}
\omega_{x_0,\LL}=\LL\ind_{B_{\sqrt{\frac{\sfm}{\LL}}}(x_0)},
\end{equation}
where we impose that $\LL$ satisfies
\begin{equation}\label{eq:Lconstraint}
\LL\ge \frac{4\sfm}{(1-|x_0|)^{2}},
\end{equation}
so that $\omega_{x_0,\LL}\in\cO_\LL$.
To estimate the kinetic energy of $\omega_{x_0,\LL}$, we use the explicit Green's function of the Laplace operator on the unit disk, so that
\begin{equation}
\psi_{x_0,\LL}(x)
=\frac{1}{2\pi}\int_{\DD} \log\frac{|x-y|}{|y||x-y_*|}\omega_{x_0,\LL}(y)\dd y=\frac{\LL}{2\pi}\int_{B_{\sqrt{\frac{\sfm}{ \LL}}}(x_0)} \log\frac{|x-y|}{|y||x-y_*|}\dd y,
\end{equation}
where $y_*=y/|y|^2$. Thus
\begin{equation}\label{eq:enercompt}
\sfE(\omega_{x_0,\LL})=-\frac12\int_\DD \psi_{x_0,\LL}(x)\omega_{x_0,M}(x)\dd x =\frac{\LL^2}{2\pi}\int_{B_{\sqrt{\frac{\sfm}{ \LL}}}(x_0)}\int_{B_{\sqrt{\frac{\sfm}{\LL}}}(x_0)}\log\frac{|y||x-y_*|}{|x-y|}\dd y\dd x.
\end{equation}
For $x,y\in B_{\sqrt{\frac{\sfm}{\LL}}}(x_0)$, we have the bound
\begin{equation}\label{eq:dist1}
|x-y|\le 2\sqrt{\frac{\sfm}{\LL}}.
\end{equation}
Using that $|y|>1/2$ and $y_*\notin \DD$, from \eqref{eq:Lconstraint} we deduce the bound
\begin{equation}\label{eq:dist2}
|y||x-y_*|\ge \frac12 \left[1-|x_0|-\sqrt{\frac{\sfm}{ \LL}}\,\right]=\frac12 \left[1-|x_0|-\frac{1-|x_0|}{2}\,\right]\ge \frac14 (1-|x_0|).
\end{equation}
Plugging \eqref{eq:dist1}-\eqref{eq:dist2} into \eqref{eq:enercompt} we obtain the bound
\begin{equation}\label{eq:ener1low}
\sfE(\omega_{x_0,\LL})\geq\frac{\pi\sfm^2}{2}\log \left(\sqrt{\frac{\LL}{\sfm}}\frac{(1-|x_0|)}{8}\right).
\end{equation}
Computing explicitly the angular momentum, we find
\begin{equation}
\sfA(\omega_{x_0,\LL})
= -\frac12\int_{\DD}(1-|x|^2)\omega_{x_0,\LL}(x)\dd x 
= -\frac{\pi\sfm}2\left(1 - |x_0|^2 -
\frac{\sfm}{2 \LL} \right)\ge -\pi\sfm(1-|x_0|).
\end{equation}
Hence, from \eqref{eq:ener1low} we have the bound
\begin{equation}\label{eq:energynonradial}
\sfE(\omega_{x_0,\LL})\geq\frac{\pi \sfm^2}{2}\log \left(\frac{\sqrt{\LL}\,|\sfA(\omega_{x_0,\LL})|}{8\pi\sqrt{m^3}}\right),
\end{equation}
as long as  
\begin{equation}\label{eq:LintermsofA}
L\ge      \frac{4\pi^2\sfm^3}{|\sfA(\omega_{x_0,\LL})|^2}
\end{equation}
to satisfy \eqref{eq:Lconstraint}.

\subsection{Proof of Theorem \ref{thm:nonrad}}
The bounds \eqref{eq:rad} and\eqref{eq:nonrad} are included in the sections above. Given an angular momentum $\sfa$, we consider the height
\begin{equation}\label{eq:heightnonradial}
L= Q^2 \pi^2\frac{\sfm^3}{|\sfa|^2 },   
\end{equation}
with $Q>2$ to be chosen below. By \eqref{eq:energynonradial} and \eqref{eq:heightnonradial}, we have the bound
\begin{equation}
\sup \left\{\sfE(\omega) : \omega\in\cO_\LL,\; \sfA(\omega)=\sfa\right\}\ge \frac{\pi\sfm^2}{2}\log \left(\frac{Q}{8}\right)
\end{equation}
which is independent of $\sfa$.
We pick 
$$
Q=8\e^{\frac{2}{\pi\sfm^2}},
$$
which implies
\begin{equation}
\sup \left\{\sfE(\omega) : \omega\in\cO_\LL,\; \sfA(\omega)=\sfa\right\}\ge 1.
\end{equation}
For radial functions, we use \eqref{eq:enerupbound} to get the bound
$$
\sup \left\{\sfE(\omega):\omega\in\cO_\LL,\; \sfA(\omega)=\sfa,\ \omega\ \mbox{is radial}\right\}
    \le C\left(\sfm |\sfa|+|\sfa|^2\log\left( Q^2 \pi^2 \frac{\sfm^3}{|\sfa|^3 }\right)\right).
$$
So to finish the proof we need to pick $\sfa_*<0$ close enough to zero depending only on $\sfm$ such that for any $\sfa\in(\sfa_*,0)$, we have
$$
C\left(\sfm |\sfa|+|\sfa|^2\log\left(Q^2\pi^2  \frac{\sfm^3}{|\sfa|^3 }\right)\right)<1,
$$
which implies the desired inequality.

\section{Stability of Onsager solutions with negative inverse temperature}\label{sec:Onsager}
Equations of Liouville type such as \eqref{eq:ELpsibeta} arise the classical setting of mean-field limits of the canonical Gibbs measure associated to a system of point vortices. We 
state below an important result from \cite{CLMP95}, for Onsager solutions, namely solutions to the mean-field equation \eqref{eq:MFequation}.

\begin{theorem}[\cite{CLMP95}*{Section 5}]\label{thm:caglioti}
    Let $\beta\in (-8\pi,\infty)$. Onsager solutions
\begin{equation}\label{eq:omegabeta1}
\omega_\beta(r)=\frac{1-A(\beta)}{\pi}\frac{1}{(1-A(\beta)r^2)^2}\qquad\mbox{with}\qquad A(\beta)=\frac{\beta}{8\pi+\beta}.    
\end{equation}
are the unique maximizer of
\begin{equation*}
        \sfF_\beta[\omega]=\sfS(\omega)-\beta\sfE(\omega)=- \int_{\DD}\omega\log\omega\;\dd x-\frac{\beta}{2} \int_{\DD}|\nabla \Delta^{-1}\omega|^2\;\dd x,
\end{equation*}
over the set 
\begin{equation*}
\cP=\left\{\omega\in L^1: \omega\geq 0, \  \int_\DD \omega(x)\dd x =1, \ \int_{\DD} \omega(x)\log\omega(x) \dd x <\infty\right\}. 
\end{equation*}
 Moreover, we have the convergence 
    $$
    \lim_{\beta\to -8\pi^{-}}\omega_\beta\to\delta_0,
    $$
weakly in the sense of measures.
\end{theorem}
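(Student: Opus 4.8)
The plan is to prove the variational characterization in four stages---existence of a maximizer, reduction to a radially decreasing profile together with the explicit solution, uniqueness, and finally the weak limit---reading off the explicit formula \eqref{eq:omegabeta1} from the Euler--Lagrange equation. Existence is the first point where the threshold $\beta>-8\pi$ enters. In contrast to the patch problem, the class $\cP$ imposes only unit mass and finite entropy, so the free energy is governed by the competition between concentration of $\omega$, which sends $\sfS(\omega)\to-\infty$, and the growth of $-\beta\sfE(\omega)=|\beta|\,\sfE(\omega)$ for $\beta<0$. The sharp analytic input is the Moser--Trudinger (equivalently logarithmic Hardy--Littlewood--Sobolev) inequality, which controls the energy in terms of the entropy precisely when $|\beta|<8\pi$. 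I would first show that $\sfF_\beta$ is bounded above on $\cP$ if and only if $\beta>-8\pi$, and that for such $\beta$ the same inequality yields compactness of a maximizing sequence (no mass escaping to $\partial\DD$ nor collapsing to a point), producing a maximizer $\bar\omega\in\cP$.

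For $\beta\ge 0$ the functional $\sfF_\beta$ is strictly concave, so the maximizer is automatically unique and radial, and the substance of the theorem lies in $\beta<0$. For $\beta<0$ the rearrangement argument of \eqref{eq:rearr}, based on Talenti's inequality \eqref{eq:energytalent}, gives $\sfF_\beta(\omega^\sharp)\ge\sfF_\beta(\omega)$ with equality only for radially decreasing $\omega$; hence every maximizer is radial and decreasing. Varying under the mass constraint yields the mean-field identity $\bar\omega=\e^{\beta\bar\psi}/Z$ with $\Delta\bar\psi=\bar\omega$ and $\bar\psi|_{\partial\DD}=0$, and since $\bar\psi$ is bounded, $\bar\omega$ is bounded and bounded below away from zero. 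In radial variables this is the Liouville equation, which integrates in closed form by Liouville's formula to a one-parameter family; imposing $\bar\psi(1)=0$ and $\int_\DD\bar\omega=1$ forces the parameter to equal $A(\beta)=\beta/(8\pi+\beta)$ and reproduces exactly \eqref{eq:omegabeta1}. A direct check shows that $A$ sweeps $(-\infty,1)$ as $\beta$ ranges over $(-8\pi,\infty)$ and that the displayed profile has unit mass.

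The real difficulty is uniqueness in the non-concave regime $\beta\in(-8\pi,0)$, and here I would transplant the optimal-transport argument of Lemma~\ref{lem:Breniermap} and Lemma~\ref{lem:last}. Given two radial decreasing maximizers, $\omega_\beta$ and a competitor $\bar\omega$, both positive and bounded, I form the radial Brenier map $T$ between them and apply Lemma~\ref{lem:Breniermap} (whose conclusion requires only radial symmetry and positivity) to express the free-energy difference along the transport interpolation in terms of $\phi=TT'/r$ from \eqref{eq:phi}. The mean-field equation, now valid on all of $(0,1)$, plays the role of \eqref{eq:appliedEL}, and Jensen's inequality exactly as in \eqref{eq:Jensen} forces the defect $-2\omega_\beta(1)\int_0^1\log\phi(r)\,r\,\dd r$ to be strictly positive unless $T$ is the identity, i.e. unless $\bar\omega=\omega_\beta$. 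The obstruction to overcome is that, unlike the patch setting, there is no $L^\infty$ constraint and the densities degenerate as $\beta\downarrow-8\pi$ (blowing up at the origin and flattening at the boundary); for each fixed $\beta>-8\pi$ one must therefore justify the change of variables and the integrations by parts, using the finiteness of $\int_\DD\omega\log\omega$ and the strict positivity and boundedness of maximizers to exclude pathological behaviour. A shooting and monotonicity analysis of the radial Liouville ODE is an alternative, but the transport route is the one consistent with the methods developed above.

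Finally, the weak limit follows from the explicit form. As $\beta\downarrow-8\pi$ we have $A(\beta)\to-\infty$; writing $A=-a$ with $a\to+\infty$ gives $\omega_\beta(r)=\frac{1+a}{\pi(1+ar^2)^2}$, whose mass on the ball $B_\eps$ equals $\frac{1+a}{a}\left(1-\frac{1}{1+a\eps^2}\right)$ and tends to $1$ for every fixed $\eps>0$. Testing against an arbitrary continuous function then yields $\omega_\beta\rightharpoonup\delta_0$ weakly in the sense of measures, which completes the argument.
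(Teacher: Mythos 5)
The first thing to say is that the paper does not prove Theorem~\ref{thm:caglioti} at all: it is imported from \cite{CLMP95}*{Section 5} as a black box, so there is no internal proof to compare against. Judged on its own terms, your outline follows the classical Caglioti--Lions--Marchioro--Pulvirenti route for the first three stages --- sharp Moser--Trudinger/log-HLS control for existence when $\beta>-8\pi$, rearrangement \`a la \eqref{eq:rearr} for radial symmetry of maximizers, explicit integration of the radial Liouville equation --- and then departs from it by proving uniqueness with the optimal-transport method of Lemmas~\ref{lem:Breniermap} and~\ref{lem:last}. That substitution is a genuinely different route and an attractive one: it would make the paper essentially self-contained on this point, and it shows the transport argument does not need the patch constraint $\omega\le 1$ (in fact it simplifies, since the Euler--Lagrange equation then holds on all of $(0,1)$, i.e.\ $r_\beta=0$). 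Two precision issues in your sketch: first, $\sfF_\beta$ is still bounded above at $\beta=-8\pi$ (Moser--Trudinger holds with the sharp constant; what fails there is attainment), so ``bounded above iff $\beta>-8\pi$'' should read ``attained iff $\beta>-8\pi$''; second, the relation $A=\beta/(8\pi+\beta)$ is not forced by the boundary condition and the mass constraint --- every profile in the family \eqref{eq:omegabeta1} with $A<1$ automatically has unit mass --- but by self-consistency of $\omega_\beta\propto\e^{\beta\psi_\beta}$ with $\Delta\psi_\beta=\omega_\beta$, which pins $\beta(1-A)=8\pi A$. Your limit computation as $\beta\downarrow-8\pi$ (equivalently $A\to-\infty$) is correct.

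The one genuine gap is the a priori $L^\infty$ bound on maximizers over $\cP$, which you assume in the phrase ``since $\bar\psi$ is bounded, $\bar\omega$ is bounded and bounded below''. Strict positivity is indeed easy for $\beta<0$: the maximum principle gives $\bar\psi\le 0$, hence $\bar\omega=\e^{\beta\bar\psi}/Z\ge 1/Z>0$. But boundedness of $\bar\psi$ from below (equivalently of $\bar\omega$ from above) does \emph{not} follow from $\bar\omega\in L^1$ with finite entropy alone; it is precisely the analytic heart of the regime $-8\pi<\beta<0$, and it was exactly the issue that the constraint $0\le\omega\le 1$ made trivial in the patch problem. One needs an elliptic estimate of Brezis--Merle type (or a bootstrap from $\e^{|\beta|\,|\bar\psi|}=Z\bar\omega\in L^1$, which for $|\beta|$ close to $8\pi$ requires using the variational structure, not just the equation) to conclude $\bar\psi\in L^\infty$ and hence smoothness of $\bar\omega$. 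Without this step neither the Euler--Lagrange manipulations nor the hypotheses of Lemma~\ref{lem:Breniermap} (both densities bounded above and below away from zero) are available, so the uniqueness argument does not close. Your closing remark that one must ``justify the change of variables \dots using the strict positivity and boundedness of maximizers'' is circular on this point: that boundedness is the thing that has to be proved.
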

The purpose of this section is to prove Theorem \ref{thm:ArnoldStability}. As mentioned already, the ideas related to (quantitative) rearrangement inequalities and elliptic equations
from \cites{Talenti76, amato2023talenti,CianchiEspositoFuscoTrombetti+2008+153+189,cianchi2008strengthened} are here revisited in the case of the disk $\DD$ and vorticities 
$\omega$ statisfying an $L^\infty$ bound. The key result for us is the following stability result with respect of the $H^{-1}$ norm and its radially decreasing rearrangement.

\begin{lemma}\label{lemma:quantitativeTalenti}
Consider a positive vorticity distribution $\omega\in L^\infty$ 
such that $\int_\DD\omega\;\dd x=\sfm>0$. 
Then, for every $\eps>0$ there exists $\delta>0$ such that if
$$
\sfE[\omega]-\sfE[\omega^\sharp]< \delta,
$$
then there exists $x_*\in \RR^2$ such that $|x_*|\le \eps$ and  
$$
\|\omega-\omega^\sharp(\cdot-x_*)\|_{L^1(\RR^2)}< \eps,
$$
where
$\omega$ and $\omega^\sharp$ are extended by zero  outside the disk.
\end{lemma}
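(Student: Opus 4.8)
The plan is to argue by compactness and contradiction, using the rigidity in Talenti's inequality \eqref{eq:energytalent} together with the fact that rearrangement preserves every $L^p$ norm. Throughout I fix a uniform bound $K:=\|\omega\|_{L^\infty}$ (harmless for the application in Theorem~\ref{thm:ArnoldStability}, since the Euler flow conserves $\|\omega(t)\|_{L^\infty}$), so $\delta$ is allowed to depend on $\eps$, $\sfm$ and $K$. Suppose the statement fails for some $\eps>0$: then there is a sequence $\omega_n\ge 0$ with $\int_\DD\omega_n=\sfm$, $\|\omega_n\|_{L^\infty}\le K$, and
\begin{equation*}
D_n:=\sfE[\omega_n^\sharp]-\sfE[\omega_n]\to 0,
\end{equation*}
yet $\|\omega_n-\omega_n^\sharp(\cdot-x_*)\|_{L^1}\ge\eps$ for every $|x_*|\le\eps$; in particular (taking $x_*=0$) $\|\omega_n-\omega_n^\sharp\|_{L^1}\ge\eps$. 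I will contradict this by showing $\omega_n-\omega_n^\sharp\to 0$ in $L^1$.

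Next I would extract limits. The $\omega_n$ are bounded in $L^\infty(\DD)$, so up to a subsequence $\omega_n\xrightharpoonup{*}\omega_\infty$, hence $\omega_n\rightharpoonup\omega_\infty$ weakly in $L^2(\DD)$. The rearrangements $\omega_n^\sharp$ are radial, nonincreasing in $r$ and bounded by $K$, so by Helly's selection theorem applied to the profiles $r\mapsto\omega_n^\sharp(r)$ and dominated convergence, a further subsequence satisfies $\omega_n^\sharp\to g$ strongly in every $L^p(\DD)$, $p<\infty$, for some radial decreasing $g$ of mass $\sfm$. Since $\nabla\Delta^{-1}$ is a compact operator on $L^2(\DD)$, the energy $\sfE[\omega]=\tfrac12\|\nabla\Delta^{-1}\omega\|_{L^2}^2$ is continuous along weak $L^2$ convergence, giving $\sfE[\omega_n]\to\sfE[\omega_\infty]$, and likewise $\sfE[\omega_n^\sharp]\to\sfE[g]$. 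Thus $D_n\to 0$ forces $\sfE[\omega_\infty]=\sfE[g]$.

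The heart of the argument is to identify $g=\omega_\infty=\omega_\infty^\sharp$. For any measurable $A$ with $|A|=\pi r^2$ one has $\int_A\omega_\infty=\lim_n\int_A\omega_n\le\lim_n M_{\omega_n^\sharp}(r)=M_g(r)$, the middle inequality holding for each $n$ because $M_{\omega_n^\sharp}(r)=\sup_{|A'|=\pi r^2}\int_{A'}\omega_n$; taking the supremum over $A$ gives $M_{\omega_\infty^\sharp}(r)\le M_g(r)$ for all $r$. As $\omega_\infty^\sharp$ and $g$ are radial, the representation \eqref{eq:energyM} yields $\sfE[\omega_\infty^\sharp]\le\sfE[g]$, which together with Talenti's inequality \eqref{eq:energytalent} produces the chain
\begin{equation*}
\sfE[\omega_\infty]\le\sfE[\omega_\infty^\sharp]\le\sfE[g]=\sfE[\omega_\infty],
\end{equation*}
forcing equality throughout. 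The rigidity in \eqref{eq:energytalent} then gives $\omega_\infty=\omega_\infty^\sharp$, i.e. $\omega_\infty$ is radial and decreasing, while $\sfE[\omega_\infty^\sharp]=\sfE[g]$ with $M_{\omega_\infty^\sharp}\le M_g$ and \eqref{eq:energyM} forces $M_{\omega_\infty^\sharp}=M_g$, hence $g=\omega_\infty$.

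Finally I would upgrade the weak convergence of $\omega_n$ to strong convergence, the only genuinely nontrivial point. Because rearrangement is equimeasurable, $\|\omega_n\|_{L^2}=\|\omega_n^\sharp\|_{L^2}\to\|g\|_{L^2}=\|\omega_\infty\|_{L^2}$, and combined with $\omega_n\rightharpoonup\omega_\infty$ weakly in $L^2$ this gives $\omega_n\to\omega_\infty$ strongly in $L^2(\DD)$, hence in $L^1(\DD)$. Therefore
\begin{equation*}
\|\omega_n-\omega_n^\sharp\|_{L^1}\le\|\omega_n-\omega_\infty\|_{L^1}+\|\omega_\infty-\omega_n^\sharp\|_{L^1}\to 0,
\end{equation*}
contradicting $\|\omega_n-\omega_n^\sharp\|_{L^1}\ge\eps$. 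This proves the lemma, in fact with $x_*=0$, which is stronger than stated. The main obstacle is exactly the failure of $\omega\mapsto\omega^\sharp$ to be weakly continuous; I circumvent it by the majorization/rigidity squeeze identifying the weak limit as radial, and by the equimeasurability identity $\|\omega_n\|_{L^2}=\|\omega_n^\sharp\|_{L^2}$, which turns norm convergence into strong convergence. An alternative, effective route---closer to the quantitative framework of \cite{amato2023talenti} and the cited quantitative Polya--Szego and Hardy--Littlewood inequalities---would instead bound $\|\omega-\omega^\sharp(\cdot-x_*)\|_{L^1}$ directly by a power of the deficit $\sfE[\omega^\sharp]-\sfE[\omega]$, with the translation $x_*$ appearing as an artifact of the translation invariance of the $\RR^2$ rearrangement inequalities.
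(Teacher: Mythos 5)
Your proof is correct, but it takes a genuinely different route from the paper's. The paper's proof is quantitative and runs in two steps: first it reproves Talenti's comparison via the level-set/isoperimetric argument to obtain the linear bound $\|\bar\phi-\phi^\sharp\|_{L^\infty}\le C(\|\omega\|_{L^\infty},\sfm)\,(\sfE[\omega^\sharp]-\sfE[\omega])$, using a pointwise lower bound on $\omega^\sharp$ near the origin coming from the mass and $L^\infty$ constraints; second, it imports the quantitative stability machinery of \cite{amato2023talenti} (quantitative P\'olya--Szeg\H{o} and Hardy--Littlewood) to convert stream-function $L^\infty$ closeness into $L^1$ closeness of the vorticity up to a small translation $x_*$, with explicit powers of the deficit. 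You instead argue softly, by compactness and contradiction: weak-$*$ compactness for $\omega_n$, Helly's theorem for the radial decreasing profiles $\omega_n^\sharp$, weak continuity of $\sfE$ (compactness of $\nabla\Delta^{-1}$ on $L^2(\DD)$), the Hardy--Littlewood majorization $M_{\omega_\infty^\sharp}\le M_g$ to squeeze the limit via \eqref{eq:energyM}, the equality case of Talenti's inequality \eqref{eq:energytalent} (which the paper itself invokes, citing the same references) to force $\omega_\infty=\omega_\infty^\sharp=g$, and finally equimeasurability $\|\omega_n\|_{L^2}=\|\omega_n^\sharp\|_{L^2}$ to upgrade weak to strong convergence. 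Both arguments implicitly read the statement the same way — $\delta$ depends on $\eps$, $\sfm$ and $\|\omega\|_{L^\infty}$ (and both correct the sign so the hypothesis is on the deficit $\sfE[\omega^\sharp]-\sfE[\omega]$) — which is exactly what the application to Theorem~\ref{thm:ArnoldStability} requires, since the Euler flow conserves both quantities. What each approach buys: the paper's method yields explicit rates and is the natural path toward a genuinely quantitative stability estimate, at the price of relying on the external results of \cite{amato2023talenti}; your method is more elementary and self-contained (modulo Talenti rigidity), gives no rate, but proves the strictly stronger conclusion that $x_*=0$ works — correctly exploiting that the Dirichlet problem on the disk is not translation invariant, so the rigidity pins the limit at the center, and revealing the translation in the statement as an artifact of the quantitative rearrangement framework rather than a necessity.
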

The proof is deferred until after the proof of Arnold's stability.

\subsection{Proof of Theorem~\ref{thm:ArnoldStability}} 

We now proceed with the proof of the main result in this section.

\begin{proof}[Proof of Theorem \ref{thm:ArnoldStability}]
 Throughout the proof we take $\|\omega^{in}-\omega_\beta\|_{L^2}<\delta$ progressively smaller, and we keep changing $\eps$ accordingly and without renaming it. 
We will also omit the dependence on $t$ of the solution $\omega=\omega(t)$, as the proof is carried for any arbitrary $t\geq0$. We proceed in several steps. To simplify the notation, we first assume that 
\begin{equation}\label{ass:mass}
\int_\DD \omega_{in}(x)\;\dd x=\sfm=1    
\end{equation}
so that we have unit mass, and in the last step we generalize to the case $\sfm\ne 1$.

\vspace{0.3cm}

\noindent \textit{Step 0.} We show that for any $\eps>0$ small enough, we can pick $\|\omega_{\beta}-\omega^{in}\|_{L^2}<\delta$ small enough the corresponding Euler solution $\omega=\omega(t)$ is such that
    \begin{equation}\label{eq:energyineq}
         0\le \sfF_{\beta}[\omega_{\beta}]-\sfF_\beta[\omega(t)]< \eps\qquad\forall t\in[0,\infty).
    \end{equation}

\vspace{0.3cm}

\noindent\textit{Proof of Step 0.} We notice that $\sfF_\beta$ is continuous with respect to the $L^2$ norm. For what concerns the kinetic energy part, by the Poincaré inequality there exists $C_0>0$ such that
    $$
        |\sfE[\omega]-\sfE[\omega']|\le C_0\|\omega-\omega'\|_{L^2}, \qquad \forall \omega,\omega'\in L^\infty\cap \cP.
    $$
    For the Boltzmann entropy part, we notice that
    \begin{equation}\label{eq:unifint}
        |\omega\log\omega|\lesssim 1+|\omega|^2
    \end{equation}
    Thus if $\omega_n\to\omega$ in $L^2$, then up to subsequences $\omega_n\log \omega_n\to \omega\log \omega$ almost everywhere and 
    \eqref{eq:unifint} implies uniform integrability. Thus, Vitali convergence theorem implies that $\sfS[\omega_n]\to \sfS[\omega]$.
    
    Hence, given $\eps>0$, there exists $\delta=\delta(\eps)$ such that if $\|\omega_{\beta}-\omega^{in}\|_{L^2}<\delta$, then
    $$
        \left|\sfF_\beta[\omega_{\beta}]-\sfF_\beta[\omega^{in}]\right|< \eps.
    $$
    Using assumption \eqref{ass:mass}, that $\omega_{\beta}$ is the unique maximizer over $\mathcal{P}$ of $\sfF_\beta$, and that the mass and free energy $\sfF_\beta$ are conserved along the evolution of the Euler equations, we obtain the desired inequality \eqref{eq:energyineq}

\vspace{0.3cm}

\noindent   \textit{Step 1.} We can choose $\|\omega^{in}-\omega_\beta\|_{L^2}<\delta$, such that
\begin{equation}\label{eq:ineq2}
        \|\omega(t,\cdot)-\omega^\sharp(t,\cdot-x_*)\|_{L^2(\RR^2)}<\eps\qquad\forall t\in[0,\infty),
\end{equation}
where $x_*\in\RR$ satisfies $|x_*|<\eps$, and both functions are extended to $\RR^2$ by zero outside the disk.

\vspace{0.3cm}

\noindent\textit{Proof of Step 1.}
Using \eqref{eq:energyineq}, we have
    \begin{equation}\label{eq:auxfinal}
        0\le \underbrace{\sfF_\beta[\omega_\beta]-\sfF_\beta[\omega^\sharp(t)]}_{\ge 0}+\underbrace{\sfF_\beta[\omega^\sharp(t)]-\sfF_\beta[\omega(t)] 
    }_{\ge 0}< \eps\qquad\forall t\in[0,\infty),
    \end{equation}
    where the positivity of each term follows from the fact that $\omega_\beta$ is the optimizer of $\sfF_\beta$ over $\mathcal{P}$, and that $\sfF_\beta[\omega^\sharp(t)]\ge \sfF_\beta[\omega(t)]$ for $\beta<0$, in view of \eqref{eq:rearr}. Hence, using that $\sfS[\omega^\sharp]=\sfS[\omega]$ we can conclude that
    $$
        0\le \sfE[\omega^\sharp(t)]-\sfE[\omega(t)]< \eps.
    $$
    Up to notation, the conclusion of \eqref{eq:ineq2} follows from the quantitative Talenti's inequality in Lemma~\ref{lemma:quantitativeTalenti}. To get the stability in $L^2$ we just need to interpolate the above bound with the bounds in $L^\infty$.

    \vspace{0.3cm}

\noindent     \textit{Step 2.} 
    We consider the Brenier map $T:[0,1]\to [0,1]$ such that $T(0)=0$, $T(1)=1$ and 
    \begin{equation}\label{eq:Brenier2}
        \int_0^{r}\omega_\beta(s)s\,\dd s=\int_0^{T(r)}\omega^\sharp(s)s\,\dd s,\qquad \text{for any } r\in[0,1].
    \end{equation}
    For every $\eps>0$, we can pick $\|\omega_\beta-\omega^{in}\|_{L^2}<\delta$ small enough, such that
    \begin{equation}\label{eq:transportbounds}
        \int_0^1 |\omega_\beta(r)-\omega^\sharp(T(r))|^2\;r\dd r<\eps\qquad\mbox{and}\qquad \int_0^1 H\left(\frac{TT'}{r}\right)\;r\dd r<\eps,       
    \end{equation}
    where $H(u)=-\log u+u-1$.
    
    \vspace{0.3cm}
 \noindent \textit{Proof of Step 2.}    By \eqref{eq:auxfinal} we have that
    $$
        0\le \sfF_\beta[\omega_\beta]-\sfF_\beta[\omega^\sharp(t)]<\eps.
    $$
    Applying Lemma~\ref{lem:last}, we have that
    $$
        0\le  -2 \omega_\beta(1) \int_{0}^1\log\left(\frac{TT'}{r}\right) r\dd r \le \sfF_\beta[\omega_\beta]-\sfF_\beta[\omega^\sharp]<\eps.
    $$
The first inequality follows from an application of Jensen's inequality as in \eqref{eq:Jensen}.

Next, we apply a quantitative version of Jensen's inequality. Given $G(\cdot)=-\log(\cdot)$, considering the random variable $X=TT'/r$, and the probability measure $ 2r\;\dd r$ in $[0,1]$, we have 
$$
\mathbb{E} X= 2\int_0^1 TT'\;\dd r=T^2(1)-T^2(0)=1.
$$
Hence, for the function $G(\cdot)=-\log(\cdot)$, we have
$$
    \mathbb{E} G(X)=\mathbb{E}\left[ G(X)-G(\mathbb{E} X)-G'(\mathbb{E} X)(X-\mathbb{E} X)\right].
$$
We define the convex function
$$
H(x):=-\log x+x-1,
$$
which satisfies the inequality
\begin{equation}\label{eq:ineqH}
\frac14 \min\{|x-1|^2,|x-1|\}\le H(x), \qquad\forall x>0. 
\end{equation}
Using the observations above, we have
$$
 2\int_0^1 H\left(\frac{TT'}{r}\right) r\;\dd r\le -2\int_{0}^1\log\left(\frac{TT'}{r}\right) r\dd r <\eps.
$$
Differentiating \eqref{eq:Brenier2}, we have
\begin{equation}\label{eq:TT'}
    \frac{T(r)T'(r)}{r}=\frac{\omega_\beta(r)}{\omega^\sharp(T(r))},
\end{equation}
Applying \eqref{eq:ineqH} and using that $ \omega^\sharp,\omega_\beta\in L^\infty$, we can pick $\tilde{a}=\tilde{a}(\|\omega^\sharp\|_{L^\infty},\|\omega_\beta\|_{L^\infty})>0$ small enough such that
$$
\tilde{a}\left|\omega_\beta(r)-\omega^\sharp(T(r))\right|^2 \le \frac14 \min\left\{\left|\frac{\omega_\beta(r)}{\omega^\sharp(T(r))}-1\right|^2,\left|\frac{\omega_\beta(r)}{\omega^\sharp(T(r))}-1\right|\right\} \le H\left(\frac{\omega_\beta(r)}{\omega^\sharp(T(r))}\right).
$$
Therefore, up to relabelling $\eps$ we have
$$
\int_0^1\left|\omega_\beta(r)-\omega^\sharp(T(r))\right|^2r\;\dd r\le \eps.
$$

\vspace{0.3cm}

\noindent\textit{Step 3.} We exhibit a control on how far  the Brenier map is from the identity. That is to say, we can choose $\|\omega_\beta-\omega^{in}\|_{L^2}<\delta$ small enough, such that
\begin{equation}\label{eq:Tid}
|T(s)-s|<\eps,\qquad\forall s\in[0,1].    
\end{equation}

 \noindent \textit{Proof of Step 3.} By \eqref{eq:transportbounds}, for $\delta$ small enough we have the inequality
    $$
 \int_0^s H\left(\frac{TT'}{r}\right) r\;\dd r\le  \int_0^1H\left(\frac{TT'}{r}\right) r\;\dd r<\eps.
    $$
Applying Jensen's inequality, we have
$$
  \frac{s^2}{2}H\left(\frac{T^2(s)}{s^2}\right)=\frac{s^2}{2}H\left(\frac{2}{s^2}\int_0^s\frac{TT'}{r}r\;\dd r\right)\le    \int_0^s H\left(\frac{TT'}{r}\right) r\;\dd r<\eps.
$$
Using \eqref{eq:ineqH}, we have
$$
\frac14 s^2 \min\left\{\left|\frac{T^2(s)}{s^2}-1\right|^2,\left|\frac{T^2(s)}{s^2}-1\right|\right\}\le\frac{s^2}{2}H\left(\frac{T^2(s)}{s^2}\right).
$$
Using the mass constraint and the $L^\infty$ bound on $\omega$ we can take $r>0$ small enough depending on $\|\omega^{in}\|_{L^\infty}$   such that 
\begin{equation} 
   \omega^\sharp(x)\ge \frac{1-\|\omega^{in}\|_{L^\infty}|x|^2}{1-|x|^2}\ge \frac{1}{2},  \qquad\forall |x|<r.
\end{equation}
Hence, using \eqref{eq:Brenier2}, we have that for $r$ small enough
$$
C^{-1} T(r)\le \int_0^{T(r)}\omega^\sharp(s)s\,\dd s= \int_0^{r}\omega_\beta(s)s\,\dd s\le C r,
$$
which implies that uniformly 
$$
\sup_{r\in[0,1]}\left|\frac{T^2(r)}{r^2}-1\right|<C(\|\omega^\sharp\|_{L^\infty}).
$$
Hence, up to relabeling $\eps$, we have
$$
  |T(s)-s|^2\left|\frac{T(s)}{s}+1\right|^2=s^2\left|\frac{T^2(s)}{s^2}-1\right|^2 < \eps,
$$
and the conclusion follows.

\vspace{0.3cm}

\noindent \textit{Step 4.} For every $\eps>0$, we can pick $\|\omega_\beta-\omega^{in}\|_{L^2}<\delta$ small enough, such that
    \begin{equation}\label{eq:boundI}
        \int_0^1 |\omega_\beta(r)-\omega^\sharp(r)|\;r\dd r<\eps.      
    \end{equation} 

\noindent \textit{Proof of Step 4.}
Changing variables and applying the triangle inequality, we notice that
\begin{align*}
\int_0^1 |\omega_\beta(r)-\omega^\sharp(r)|\;r\dd r
&=\int_0^1 |\omega_\beta(T(r))-\omega^\sharp(T(r))|\; T(r)T'(r) \dd r\\
&\le\underbrace{\int_0^1 |\omega_\beta(T(r))-\omega_\beta(r)|\;T(r)T'(r)\dd r}_{I}+\underbrace{\int_0^1|\omega_\beta(r)-\omega^\sharp(T(r))|\; T(r)T'(r) \dd r}_{II}\nonumber.
\end{align*}
Applying \eqref{eq:Tid} and the smoothness of $\omega_\beta$, we get the bound
$$
I\le \|\de_r\omega_\beta\|_{L^\infty}\|T-r\|_{L^\infty} \int_0^1 T(r)T'(r)\;\dd r<\|\de_r\omega_\beta\|_{L^\infty}\eps.
$$
We manipulate the second term to get
\begin{align*}
    II&\le  \int_0^1|\omega_\beta(r)-\omega^\sharp(T(r))|\; r \dd r+\int_0^1|\omega_\beta(r)-\omega^\sharp(T(r))|\; (TT'-r) \dd r \le \sqrt{\frac{\eps}{2}} +\eps,
\end{align*}
where we have used Cauchy-Schwarz and  \eqref{eq:transportbounds}. Up to renaming $\eps$, \eqref{eq:boundI} now follows.

\vspace{0.3cm}

\noindent \textit{Step 5.} We now conclude the proof of the theorem, under the assumption the unit mass assumption \eqref{ass:mass}.

\vspace{0.3cm}

\noindent \textit{Proof of Step 5.}
We combine \eqref{eq:ineq2} and \eqref{eq:boundI}, to obtain that  
$$
\|\omega_\beta(\cdot-x_*)-\omega(t)\|_{L^1(\RR^2)}< 2\eps,\qquad \forall t\geq0,
$$
for some $x_*\in\RR^2$ such that $|x_*|\leq\eps$, where we have extended the functions by zero outside the disk $\DD$. Using the continuity of the $L^1$ norm over translations for $\omega_\beta$, we can conclude that, up to renaming $\eps$,
$$
\|\omega_\beta-\omega(t)\|_{L^1(\RR^2)}<\eps.
$$
To get the stability in $L^2$ we just need to interpolate the above bound with the bounds in $L^\infty$.

\vspace{0.3cm}

\noindent  
\textit{Step 6.} We conclude conclude the proof of the theorem, without the unit mass assumption \eqref{ass:mass}.

\vspace{0.3cm}

\noindent\textit{Proof of Step 6.} We start by showing that the maximizer of the problem 
\begin{equation}\label{eq:probm}
        \max_{\omega\in \cP_\sfm}\sfF_\beta[\omega]=\sfS(\omega)-\beta\sfE(\omega)=- \int_{\DD}\omega\log\omega\;\dd x-\frac{\beta}{2} \int_{\DD}|\nabla \Delta^{-1}\omega|^2\;\dd x,
\end{equation}
where
\begin{equation*}
\cP_\sfm=\left\{\omega\in L^1: \omega\geq 0, \  \int_\DD \omega(x)\dd x =\sfm, \ \int_{\DD} \omega(x)\log\omega(x) \dd x <\infty\right\},
\end{equation*}
is continuous with respect to the mass parameter $\sfm$ in any $L^p$ with $p\in[1,\infty)$, as the long the parameter $\sfm\beta>-8\pi$.
By re-scaling, we can relate
$$
\max_{\omega\in \cP_\sfm}- \int_{\DD}\omega\log\omega\;\dd x-\frac{\beta}{2} \int_{\DD}|\nabla \Delta^{-1}\omega|^2\;\dd x=\max_{\omega\in \cP_1}-\sfm \int_{\DD}\omega\log\omega\;\dd x-\frac{\beta\sfm^2}{2} \int_{\DD}|\nabla \Delta^{-1}\omega|^2\;\dd x-\sfm\log\sfm
$$
Hence, the for general $\sfm$, we get that the maximizer of \eqref{eq:probm} is given by
$$
\omega_*=\sfm \omega_{\sfm \beta},
$$
which by \eqref{eq:omegabeta} is continuous in any topology as long as it does not blow up $\sfm\beta>-8\pi$.

Using the continuity of the mass with respect to the $L^2$ norm, we can pick $\delta$ small enough so that $\|\omega_\beta-\omega_{in}\|_{L^2}<\delta$ implies
\begin{equation}\label{eq:111}
    \|\sfm \omega_{\sfm \beta}-\omega_\beta\|_{L^2}<\eps\qquad\mbox{and}\qquad\|\sfm\omega_{\sfm\beta}-\omega_{in}\|_{L^2}<\eps
\end{equation}
where
$$
\sfm=\int_{\DD}\omega_{in}(x)\;\dd x.
$$
To conclude the proof, we need to repeat \textit{Steps 0-5} replacing the role of $\omega_\beta$ by $\sfm\omega_{\sfm\beta}$. We conclude that we can pick $\delta$ small enough $\|\omega_\beta-\omega_{in}\|_{L^2}<\delta$ such that $\|\sfm\omega_{\sfm\beta}-\omega(t)\|_{L^2}<\eps$ for all $t>0$, and the result follows, up to relabeling $\eps$, by \eqref{eq:111}.

\end{proof}
\subsection{Stability of rearrangements}

We now proceed to prove Lemma~\ref{lemma:quantitativeTalenti}, which constitute the crucial step in the proof of Theorem \ref{thm:ArnoldStability} above.

\begin{proof}[Proof of Lemma~\ref{lemma:quantitativeTalenti}]
We consider (up to signs!) the associated stream functions to $\omega$ and its rearrangement $\omega^\#$
\begin{equation}\label{eq:ellipticprob}
    \begin{cases}
            -\Delta  \phi=\omega,\quad & \mbox{in}\; \DD,\\
            \phi=0, & \mbox{on}\;\partial\DD,
        \end{cases}
        \qquad
        \begin{cases}
            -\Delta \bar \phi=\omega^\sharp,\quad  & \mbox{in}\; \DD,\\
            \bar\phi=0, & \mbox{on}\;\partial\DD.
        \end{cases}
\end{equation}
A celebrate theorem of Talenti \cite{Talenti76}*{Theorem 1} states   that  
$$
\phi^\sharp(x)\le \bar\phi(x),\qquad\forall x\in\DD,
$$
and  
$$
\sfE[\omega]=\frac12\|\nabla \phi\|^2_{L^2 }\leq\frac12\|\nabla \bar\phi\|^2_{L^2 }=\sfE[\omega^\sharp].
$$
To prove the lemma, we again proceed in steps.

\vspace{0.3cm}

\noindent\textit{Step 1.} We show that under our hypothesis, there exists $C(\|\omega\|_{L^\infty},\sfm)>0$ such that
\begin{equation}\label{eq:bound1}
    \|\bar{\phi} -\phi^{\sharp} \|_{L^\infty}\le  C\left(\sfE[\omega^\sharp]-\sfE[\omega]\right).
\end{equation}

\vspace{0.3cm}

\noindent\textit{Proof of Step 1.}
For $h\geq 0$, we consider the distribution function
$$
u(h)=|\{x\in\DD\;:\;\phi(x)>h\}|,
$$
whose derivative is
$$
u'(h)=-\int_{\partial [\phi>h]}\frac{1}{|\nabla \phi|}\dd\mathcal{H}^1.
$$
Considering the perimeter of the level sets and the isoperimetric inequality we obtain
$$
2\pi^{\frac12} u(h)^{\frac12}\le \mathrm{Per}([\phi>h])=\int_{\partial [\phi>h]} \dd\mathcal{H}^1\le \left(-u'(h)\int_{\partial [\phi>h]}|\nabla \phi| \dd\mathcal{H}^1\right)^{\frac12}.
$$
Next, we use the first equation in \eqref{eq:ellipticprob} to compute the last integral
$$
\int_{\partial [\phi>h]}|\nabla \phi| \dd\mathcal{H}^1=\int_{[\phi>h]} -\Delta\phi\;\dd x 
=\int_{[\phi>h]}\omega\;\dd x\le \int_0^{\left(\frac{u(h)}{\pi}\right)^{\frac12}} \omega^\sharp(s)s\;\dd s.
$$
Putting the last two equations together, we get the inequality
\begin{equation}\label{eq:der}
    4\pi u(h)\le -u'(h)\int_0^{\left(\frac{u(h)}{\pi}\right)^{\frac12}}\omega^\sharp(s)s\;\dd s.
\end{equation}
Noting that for the rearranged vorticity $\omega^\sharp$ all the inequalities are in fact equalities, we obtain that the distribution $v(h)=|\{x\in\DD\;:\;\bar\phi(x)>h\}|$ function satisfies
\begin{equation}\label{eq:der1}
4\pi v(h)= -v'(h)\int_0^{\left(\frac{v(h)}{\pi}\right)^{\frac12}}\omega^\sharp(s)s\;\dd s.
\end{equation}
Using the boundary condition we have that 
$$
u(0)=v(0)=\pi,
$$
and hence we can use the derivative equations \eqref{eq:der} and \eqref{eq:der1} to conclude that 
$$
u(h)\le v(h),\qquad\forall h\ge 0.
$$
Using the inequality above with
\begin{equation}\label{eq:assoc}
u(\phi^{\sharp}(x))=\pi|x|^2=v(\bar{\phi}(x))\qquad\mbox{and}\qquad v'(h)<0,    
\end{equation}
we get Talenti's inequality
\begin{equation}\label{eq:talenti}
   \phi^{\sharp}(x)\le \bar{\phi}(x). 
\end{equation}
Using \eqref{eq:assoc}, with \eqref{eq:der} and \eqref{eq:der1}, we obtain that 
$$
\de_r\phi^\sharp(r)\ge \de_r\bar\phi(r), \qquad\forall r\in[0,1],
$$
where we have abused notation and considered $\phi^\sharp$ and $\bar\phi$ with respect to the radial variable. This implies
$$
\max_{x\in\DD} |\bar{\phi}(x)-\phi^{\sharp}(x)|=\bar{\phi}(0)-\phi^{\sharp}(0).
$$
Since $\omega,\, \omega^\sharp \in L^\infty$, the corresponding streamfunctions  $\phi,\,\bar\phi$ are Lipschitz-continuous. As radial rearrange\-ments are contractive in the Lipschitz norm, we have
$$
\|\phi^\sharp\|_{W^{1,\infty}}\le \|\phi\|_{W^{1,\infty}}\lesssim \|\omega\|_{L^\infty}.
$$
Hence, there exists $r>0$ small enough depending only on $\|\omega\|_{L^\infty}$ such that 
\begin{equation}\label{eq:auxa}
|\bar{\phi}(x)-\phi^{\sharp}(x)|\ge \frac{1}{2}\|\bar{\phi}-\phi^{\sharp}\|_{L^\infty},\qquad\forall |x|<r.    
\end{equation}
Using the mass constraint and the $L^\infty$ bound on $\omega$ we can take $r>0$ small enough depending on $\|\omega\|_{L^\infty}$ and $\sfm$ such that 
\begin{equation}\label{eq:auxb}
   \omega^\sharp(x)\ge \frac{\sfm-\|\omega\|_{L^\infty}|x|^2}{1-|x|^2}\ge \frac{\sfm}{2},  \qquad\forall |x|<r.
\end{equation}

Now we are ready to show \eqref{eq:bound1}. By the the Hardy-Littlewood inequality,
\begin{align*}
     \sfE[\omega^\sharp]-\sfE[\omega]
     &=\frac12\int_{\DD} \bar\phi\omega^\sharp-\phi\omega\;\dd x\\
     &=\frac12\int_{\DD} \phi^\sharp\omega^\sharp-\phi\omega+(\bar\phi-\phi^\sharp)\omega^\sharp\;\dd x\\
     &\ge \frac12\int_{\DD}(\bar\phi-\phi^\sharp)\omega^\sharp\;\dd x\\
     &\ge \frac12\int_{B_r}(\bar\phi-\phi^\sharp)\omega^\sharp\;\dd x\\
     &\ge  c(\|\omega\|_{L^\infty},\sfm)\|\bar{\phi}-\phi^{\sharp}\|_{L^\infty},
\end{align*}
where we used \eqref{eq:talenti}, \eqref{eq:auxa} and \eqref{eq:auxb}.

\vspace{0.3cm}

\noindent\textit{Step 2.} For any $\eps>0$, there exists $\delta>0$ such that if $\|\bar\phi-\phi^\sharp\|_{L^\infty}<\delta$, there exists $x_*\in\RR^2$ such that $|x_*|<\eps$ and 
    $$
        \|\omega-\omega^\sharp(\cdot-x_*)\|_{L^1(\RR^2)}<\eps
    $$
    where the functions $\omega$ and $\omega^\sharp$ are extended by zero  outside the disk.

\vspace{0.3cm}
\noindent\textit{Proof of Step 2.} 
We choose $x_*$ to be the optimizer of 
    $$
    \|\phi-\phi^\sharp(\cdot-x_*)\|_{L^2(\RR^2)}=\inf_{x_0\in\RR^2}\|\phi-\phi^\sharp(\cdot-x_0)\|_{L^2(\RR^2)}.
    $$
    Applying  \cite{amato2023talenti}*{Section 5, eqn. (82)}, we have that there exists a $C>0$ such that
    $$
        C^{-1}\min(|x_*|,1/2)\le|\DD\triangle(\DD+x_*)|\le C\|\bar\phi-\phi^\sharp\|^{1/4}_{L^\infty}
    $$
    The proof of \cite{amato2023talenti}*{Theorem 1.4} shows exactly that we can pick $\|\bar\phi-\phi^\sharp\|_{L^\infty}<\delta$ small enough such that
    $$
        \inf_{x_0\in\RR^2} \|\omega-\omega^\sharp(\cdot-x_0)\|_{L^1} \le \|\omega-\omega^\sharp(\cdot-x_*)\|_{L^1}<\eps.
    $$
This concludes the proof.
\end{proof}

\appendix

\section{A min-max principle}\label{app:minmax}
We show a variation of the classical min-max principle, which can be found in \cite{ET}*{Chapter VI}.

\begin{proposition}\label{prop:minmax}
Let $A$ and $B$ be closed convex sets of a Banach space $(X_1,\|\cdot\|_1)$ and $(X_2,\|\cdot\|_2)$, and consider a proper functional $\sfL:A\times B \to \RR$. Assume the following.
\begin{enumerate}[label=(\alph*), ref=(\alph*)]
\item For every $\beta\in A$, the function $\omega \mapsto \sfL(\beta,\omega)$ is  weakly u.s.c.\label{a}.
\item For every $\omega\in B$, the function $\beta \mapsto \sfL(\beta,\omega)$ is convex and l.s.c.. \label{b}
\item The functional $\sfL$ is coercive in $\beta$. More specifically, there exists a function $g:[0,\infty)\to[0,\infty)$ such that $\lim_{u\to\infty}g(u)=\infty$ and for any $\beta\in A$ there exists $\omega\in B$ so that
$$
\sfL(\beta,\omega)\ge g(\|\beta\|_{1}).
$$\label{c}
\item The set $B$ is bounded, hence weakly compact.\label{d}
\item\label{item:minmaxunique} For every $\beta\in A$, the function $\omega \mapsto \sfL(\beta,\omega)$ has a unique maximizer $\omega_\beta$.\label{e}
\end{enumerate}
Then
\begin{equation}
\inf_{\beta\in A}\sup_{\omega\in B}\sfL(\beta,\omega)=\sup_{\omega\in B}\inf_{\beta\in A}\sfL(\beta,\omega).
\end{equation}
\end{proposition}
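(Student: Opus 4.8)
The plan is to produce a saddle point $(\bar\beta,\bar\omega)\in A\times B$ for $\sfL$, from which the equality follows at once. Write $p_\star=\inf_{\beta\in A}\sup_{\omega\in B}\sfL(\beta,\omega)$ and $d_\star=\sup_{\omega\in B}\inf_{\beta\in A}\sfL(\beta,\omega)$; the inequality $d_\star\le p_\star$ is automatic (weak duality, since $\inf_\beta\sfL(\beta,\omega_0)\le\sfL(\beta_0,\omega_0)\le\sup_\omega\sfL(\beta_0,\omega)$ for all $\beta_0,\omega_0$), so the entire content is the reverse inequality $p_\star\le d_\star$. First I would set $h(\beta):=\sup_{\omega\in B}\sfL(\beta,\omega)$ and record its structure. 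By hypothesis \ref{a} together with the weak compactness of $B$ from \ref{d}, the supremum defining $h(\beta)$ is attained, and by the uniqueness hypothesis \ref{e} it is attained at a single point $\omega_\beta$, so $h(\beta)=\sfL(\beta,\omega_\beta)$. Being a supremum over $\omega$ of the convex, lower semicontinuous functions $\beta\mapsto\sfL(\beta,\omega)$ from \ref{b}, $h$ is itself convex and lower semicontinuous on $A$; and from \ref{c} one has $h(\beta)\ge g(\|\beta\|_1)$, so $h$ is coercive.

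Next I would produce a minimizer $\bar\beta\in A$ of $h$. Since $h$ is convex and strongly lower semicontinuous it is weakly sequentially lower semicontinuous by Mazur's lemma, while coercivity forces any minimizing sequence $\{\beta_n\}$ to be bounded; extracting a weak limit inside the weakly closed convex set $A$ then yields $\bar\beta$ with $h(\bar\beta)=p_\star$. (In the application of interest $X_1=\RR$ and $A=\RR$, where this extraction is elementary.) Setting $\bar\omega:=\omega_{\bar\beta}$, the unique maximizer from \ref{e}, we have by construction $\sfL(\bar\beta,\bar\omega)=h(\bar\beta)=p_\star$ and $\sfL(\bar\beta,\omega)\le\sfL(\bar\beta,\bar\omega)$ for all $\omega\in B$, which is one half of the saddle-point property.

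The crux is the second half: that $\bar\beta$ also minimizes $\beta\mapsto\sfL(\beta,\bar\omega)$ over $A$. Here I would invoke the formula for the subdifferential of a supremum of convex functions \cite{Zalinescu02}*{Theorem 2.4.18}: in general $\partial h(\bar\beta)={\rm cl}\,{\rm co}\big(\bigcup\{\partial_\beta\sfL(\bar\beta,\omega):\sfL(\bar\beta,\omega)=h(\bar\beta)\}\big)$, but the uniqueness hypothesis \ref{e} collapses the index set to the single point $\bar\omega$, so that $\partial h(\bar\beta)=\partial_\beta\sfL(\bar\beta,\bar\omega)$. Since $\bar\beta$ minimizes the convex function $h$ over the convex set $A$, optimality reads $0\in\partial h(\bar\beta)+N_A(\bar\beta)$; transporting it through the last identity gives $0\in\partial_\beta\sfL(\bar\beta,\bar\omega)+N_A(\bar\beta)$, which by convexity of $\beta\mapsto\sfL(\beta,\bar\omega)$ from \ref{b} means exactly that $\bar\beta$ minimizes $\sfL(\cdot,\bar\omega)$ on $A$. (When $A=\RR$ this is elementary: minimality of $h$ gives $h'_-(\bar\beta)\le 0\le h'_+(\bar\beta)$, and Danskin's theorem with the unique maximizer identifies $h'_\pm(\bar\beta)$ with the one-sided derivatives of $\sfL(\cdot,\bar\omega)$.)

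Finally I would chain the saddle inequalities:
$$p_\star=\sfL(\bar\beta,\bar\omega)=\inf_{\beta\in A}\sfL(\beta,\bar\omega)\le\sup_{\omega\in B}\inf_{\beta\in A}\sfL(\beta,\omega)=d_\star,$$
which combined with weak duality gives $p_\star=d_\star$, as claimed. The main obstacle I anticipate is the subdifferential transfer in the third paragraph: checking the hypotheses under which the Valadier/Danskin identity $\partial h(\bar\beta)=\partial_\beta\sfL(\bar\beta,\bar\omega)$ holds (some joint regularity of $\sfL$ in $\beta$ is required), together with the clean extraction of $\bar\beta$ when $X_1$ is merely Banach rather than reflexive. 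Both difficulties evaporate in the intended application, where $X_1=\RR$ and $\sfL(\beta,\omega)=\beta\sfe+\sfS_f(\omega)-\beta\sfE(\omega)$ is affine in $\beta$.
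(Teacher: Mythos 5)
Your skeleton (weak duality; minimize $h(\beta)=\sup_{\omega\in B}\sfL(\beta,\omega)$; produce a saddle point at $(\bar\beta,\omega_{\bar\beta})$) matches the paper's, but the crux of your argument --- transferring optimality from $h$ to $\sfL(\cdot,\bar\omega)$ through the identity $\partial h(\bar\beta)=\partial_\beta\sfL(\bar\beta,\bar\omega)$ --- is exactly where there is a genuine gap. The Valadier/Danskin formula you cite is a theorem with hypotheses of its own: besides compactness of the index set and upper semicontinuity of $\omega\mapsto\sfL(\beta,\omega)$ (which you do obtain from (a) and (d)), it requires continuity of the convex functions $\beta\mapsto\sfL(\beta,\omega)$ at $\bar\beta$. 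Hypothesis (b) grants only convexity and \emph{lower} semicontinuity, on a closed convex set $A$ which may have empty interior in $X_1$, so finiteness of $\sfL$ on $A$ does not upgrade to continuity; under (a)--(e) alone the inclusion $\partial h(\bar\beta)\subseteq\partial_\beta\sfL(\bar\beta,\bar\omega)$ --- the only inclusion your argument uses --- is unavailable, and there is not even a guarantee that $\partial_\beta\sfL(\bar\beta,\bar\omega)$ is nonempty. The sum rule $\partial(h+\iota_A)(\bar\beta)=\partial h(\bar\beta)+N_A(\bar\beta)$ you invoke needs a qualification condition of the same continuity type. You flag all of this yourself as ``the main obstacle'' and defer to the intended application, where $\sfL$ is affine in $\beta\in\RR$; but the proposition is stated abstractly under (a)--(e) precisely so that it can be quoted as a black box, so deferring to the application leaves the statement unproved. (Your extraction of the minimizer $\bar\beta$ by weak compactness has a similar issue in non-reflexive $X_1$; the paper is equally terse on that point, and it is harmless when $X_1=\RR$.)

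The paper closes the critical step with no subdifferential calculus at all, and you could substitute its argument directly. With $\bar\beta$ a minimizer of $h$, fix $\beta\in A$, set $\beta_t=(1-t)\bar\beta+t\beta$, and let $\omega_{\beta_t}$ be the unique maximizer from (e). Maximality and convexity (b) give
\begin{equation}
h(\bar\beta)\le \sfL(\beta_t,\omega_{\beta_t})\le (1-t)\,\sfL(\bar\beta,\omega_{\beta_t})+t\,\sfL(\beta,\omega_{\beta_t})\le (1-t)\,h(\bar\beta)+t\,\sfL(\beta,\omega_{\beta_t}),
\end{equation}
hence $h(\bar\beta)\le\sfL(\beta,\omega_{\beta_t})$ for every $t\in(0,1)$. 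By (d), $\omega_{\beta_t}\rightharpoonup\bar\omega$ along a subsequence as $t\to0$; combining the maximality of $\omega_{\beta_t}$, the lower semicontinuity in $\beta$ from (b) (applied along $\beta_t\to\bar\beta$), and the weak upper semicontinuity (a), one finds $\sfL(\bar\beta,\bar\omega)\ge\sfL(\bar\beta,\omega)$ for all $\omega\in B$, so uniqueness (e) forces $\bar\omega=\omega_{\bar\beta}$. Passing to the limit in $h(\bar\beta)\le\sfL(\beta,\omega_{\beta_t})$ using (a) then yields $h(\bar\beta)\le\sfL(\beta,\omega_{\bar\beta})$ for every $\beta\in A$, which is precisely your ``second half of the saddle-point property,'' now derived from the listed hypotheses alone. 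It is worth noting that the subdifferential identity you attempted is exactly the device the paper uses later, in the proof of Proposition \ref{prop:conditionalconcavity}, where it is legitimate because there the functions $\beta\mapsto\sfS_f(\omega)-\beta\sfE(\omega)$ are affine, hence continuous, on all of $\RR$; it is not available at the level of generality of the min-max principle itself.
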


\begin{remark}\label{rem:app}
 The condition \ref{e} can be weakened to the following:
    For any $\beta_*\in A$ any two maximizers $\omega_1^*$ and $\omega_2^*$ satisfy
    $$
        \sfL(\beta,\omega_1^*)=\sfL(\beta,\omega_2^*),\qquad\mbox{for any other $\beta\in A$.}
    $$
    In the case of the Euler equation in a radial domain this means that min-max principle applies if we know that the maximizers are unique up to rigid rotations, which preserves the energy and the entropy.
\end{remark}
\begin{proof}[Proof of Proposition \ref{prop:minmax}]
First of all, we observe that
\begin{equation}
\sfL(\beta,\omega)\geq\inf_{\beta\in A}\sfL(\beta,\omega), \qquad  \forall \omega\in B,
\end{equation}
so that
\begin{equation}
\sup_{\omega\in B} \sfL(\beta,\omega)\geq\sup_{\omega\in B}\inf_{\beta\in A}\sfL(\beta,\omega),
\end{equation}
and thus
\begin{equation}
\inf_{\beta\in A}\sup_{\omega\in B} \sfL(\beta,\omega)\geq\sup_{\omega\in B}\inf_{\beta\in A}\sfL(\beta,\omega),
\end{equation}
so we only need to prove the reverse inequality.
Define 
$$
f(\beta):= \sfL(\beta,\omega_\beta)=\sup_{\omega\in B} \sfL(\beta,\omega),
$$ 
where $\omega_\beta\in B$ is assumed to be the unique maximizer from \ref{e}. The function $\beta\mapsto f(\beta)$ is convex and l.s.c., being the envelope of convex l.s.c. functions by assumption \ref{b}. Therefore by convexity and coercivity \ref{c} it attains its lower bound at some $\bar\beta \in A$, so that
\begin{equation}
f(\bar\beta)=\min_{\beta\in A} f(\beta)=\min_{\beta\in A} \max_{\omega\in B} \sfL(\beta,\omega)
\end{equation}
and
\begin{equation}
f(\bar\beta)\geq \sfL(\bar\beta,\omega), \qquad \forall \omega \in B.
\end{equation}
Now, by convexity \ref{b} for every $\beta\in A,\omega \in B$ and $t\in(0,1)$, we have
\begin{equation}
\sfL((1-t)\bar\beta+t\beta,\omega)\leq (1-t)\sfL(\bar\beta,\omega)+t\sfL(\beta,\omega)
\end{equation}
In particular, taking $\beta_t=(1-t)\bar\beta+t\beta$ we consider $\omega=\omega_{\beta_t}$ given by \ref{e}, we find
\begin{align}
f(\bar\beta)\leq f(\beta_t)= \sfL(\beta_t,\omega_{\beta_t})\leq (1-t)\sfL(\bar\beta,\omega_{\beta_t})+t\sfL(\beta,\omega_{\beta_t})\leq (1-t)f(\bar\beta)+t\sfL(\beta,\omega_{\beta_t}),
\end{align}
implying
\begin{align}\label{eq:2.7temam}
f(\bar\beta)\leq \sfL(\beta,\omega_{\beta_t}), \qquad \forall \beta\in A.
\end{align}
Now, by compactness \ref{d}, as $t\to 0$, $\omega_{\beta_t}$  converges weakly to some $\bar\omega\in B$, up to subsequences. Next, we claim that $\bar\omega=\omega_{\bar\beta}$. Indeed,
\begin{equation}
\sfL(\beta_t,\omega_{\beta_t})\geq \sfL(\beta_t,\omega), \qquad \forall \omega\in B,
\end{equation}
and from convexity \ref{b} we have 
\begin{equation}
(1-t)\sfL(\bar\beta,\omega_{\beta_t})+t\sfL(\beta,\omega_{\beta_t})\geq \sfL(\beta_t,\omega), \qquad \forall \omega\in B.
\end{equation}
Since $\sfL(\beta,\omega_{\beta_t})\leq f(\beta)<\infty$, we can use the semi-continuity \ref{a} and \ref{b} to pass to the limit as $t\to 0$ and obtain
\begin{equation}
\sfL(\bar\beta,\bar\omega)\geq \limsup_{t\to0} (1-t)\sfL(\bar\beta,\omega_{\beta_t})+t\sfL(\beta,\omega_{\beta_t})\geq \liminf_{t\to0} \sfL(\beta_t,\omega)\geq \sfL(\bar\beta,\omega), \qquad \forall \omega\in B,
\end{equation}
proving the claim that $\bar\omega=\omega_{\bar\beta}$, the unique maximizer by \ref{e}. We can now pass to the limit in \eqref{eq:2.7temam} using weak u.s.c. \ref{a} to get
\begin{align}\label{eq:2.8temam}
f(\bar\beta)\leq \sfL(\beta,\omega_{\bar\beta}), \qquad \forall \beta\in A.
\end{align}
Thus
\begin{equation}
\min_{\beta\in A} \max_{\omega\in B} \sfL(\beta,\omega)\leq \min_{\beta\in A} \sfL(\beta,\omega_{\bar\beta}) \leq \max_{\omega\in B}\min_{\beta\in A} \sfL(\beta,\omega),
\end{equation}
as we needed, and the proof is complete.
\end{proof}

\addtocontents{toc}{\protect\setcounter{tocdepth}{0}}

 \section*{Acknowledgments} 
The research of MCZ was partially supported by the Royal Society URF\textbackslash R1\textbackslash 191492 and EPSRC Horizon Europe Guarantee EP/X020886/1. The research of MGD was partially supported by NSF-DMS-2205937 and NSF-DMS RTG 1840314. The authors would also like to thank AIMS Senegal for their hospitality in the early stages of this project. We would like to thank J.A. Carrillo, E. Caglioti, T.D. Drivas and V. Šverák for illuminating discussions that helped improving this work.

\addtocontents{toc}{\protect\setcounter{tocdepth}{1}}

\bibliographystyle{alpha}
\bibliography{Biblio-Entropy2dEuler}

\end{document}